\numberwithin{equation}{section}
\theoremstyle{definition}
\newtheorem*{definition*}{Definition}
\newtheorem{definition}{Definition}[section]
\theoremstyle{plain}
\newtheorem*{theorem*}{Theorem}
\newtheorem{theorem}[definition]{Theorem}
\newtheorem{lemma}[definition]{Lemma}
\newtheorem{corollary}[definition]{Corollary}
\newtheorem*{proposition*}{Proposition}
\theoremstyle{remark}
\newtheorem{remark}[definition]{Remark}
\newtheorem*{remark*}{Remark}
\newtheorem*{fact*}{Fact}
\theoremstyle{claim}
\newtheorem{claim}[definition]{Claim}
\newtheorem*{claim*}{Claim}
\newcommand{\Z}{\mathbb{Z}}
\newcommand{\Q}{\mathbb{Q}}
\newcommand{\C}{\mathbb{C}}
\newcommand{\F}{\mathbb{F}}
\newcommand{\PP}{\mathbb{P}}
\newcommand{\vphi}{\varphi}
\newcommand{\on}[1]{\operatorname{#1}}
\newcommand{\Mod}{\on{Mod}}
\newcommand{\inn}[1]{\langle #1 \rangle}
\DeclareMathOperator{\lcm}{lcm}
\DeclareMathOperator{\SL}{SL}
\DeclareMathOperator{\GL}{GL}
\DeclareMathOperator{\Char}{char}
\DeclareMathOperator{\Order}{order}
\DeclareMathOperator{\Orb}{Orb}
\newcommand{\Number}{\mathrm{number}}
\newcommand{\Abf}{\textbf{A}}
\newcommand{\relmiddle}[1]{\mathrel{}\middle#1\mathrel{}}
\setlist[enumerate,1]{label={\rm(\arabic*)}, ref={\rm\arabic*}}
\newlist{romanenum}{enumerate}{2}
\setlist[romanenum,1]{label={\rm(\roman*)}, ref={\rm\roman*}}
\newcommand{\supth}[1]{\ensuremath{#1^{\mathrm{th}}}}
\newcommand{\suprd}[1]{\ensuremath{#1^{\mathrm{rd}}}}
\title{Enumerating finite braid group orbits on $\boldsymbol{\SL_2(\C)}$-character varieties}
\author{Amal Vayalinkal}
\address{Department of Mathematics, University of Toronto, Toronto, ON, M5S 2E4, Canada}
\email{amalrose.vayalinkal@mail.utoronto.ca, amalrosevayalinkal@gmail.com}
\begin{document}



\maketitle

\begin{prelims}

\DisplayAbstractInEnglish

\bigskip

\DisplayKeyWords

\medskip

\DisplayMSCclass

\end{prelims}


\newpage

\setcounter{tocdepth}{1}

\tableofcontents


	\section{Introduction}
	Dynamics of the natural braid group action on character varieties have been studied for over a century. The growing interest in classifying the finite orbits of this action has led to a wealth of new techniques and examples. Inspired by Boalch \cite{boalch2005klein}, who explored finite complex reflection groups as a source of finite orbits, Lam--Landesman--Litt \cite{LLL} showed, up to mild conditions, that all finite orbits come from finite complex reflection groups via Katz's middle convolution operation; see \cite{katz1996}. With the goal of completing their analysis, we search the finite complex reflection groups to enumerate all possibilities and study them. We classify all finite orbits arising via middle convolution from the finite complex reflection groups.
    
    We now set notation and briefly recall the connection between local systems and representations. An expert should feel free to skip to \Cref{exp sec}.
	
	Let $\Sigma_{T+1}$ denote the $(T+1)$-punctured sphere, or equivalently, let $\Sigma_{ T+1} = \C\PP^1 \setminus \{x_1, \dots, x_{T+1}\}$. The fundamental group of the projective line minus $T+1$ points based at $x$ has the following presentation:
    \begin{align*}
        \pi_1(\Sigma_{T+1}, x) = \left\langle \gamma_1, \dots, \gamma_{T+1} \relmiddle | \prod_{i = 1}^{T+1} \gamma_i = 1 \right\rangle, 
    \end{align*}
    where each $\gamma_i$ is the simple closed loop around the $\supth{i}$ puncture.

    A representation of $\pi_1(\Sigma_{T+1}, x)$ is a homomorphism
    $$
    \rho\colon \pi_1(\Sigma_{T+1}, x) \longrightarrow \GL_n(\C)
    $$
    which, in the presentation given above, is completely determined by the image of each $\gamma_i$.
    Thus, a rank $n$ representation of $\pi_1(\Sigma_{T+1}, x)$ can be identified with a tuple of matrices $(A_1, \dots, A_{T+1}) \in \GL_n(\C)^{T+1}$ satisfying \begin{align*}
		\prod_{i = 1}^{T+1} A_i = I, 
	\end{align*}
    where $A_i = \rho(\gamma_i)$ for each $i$. This condition tells us that $A_{T+1}$ is completely determined by the first $T$ matrices of the tuple. Hence, any representation of the free group on $T$ generators, denoted $F_T$, defined by a tuple of $T$ matrices $(A_1, \dots, A_{T})$, gives us a representation of $\pi_1(\Sigma_{T+1}, x)$ by defining $A_{T+1} = (\prod_{i = 1}^{T} A_i)^{-1}$ and \textit{vice versa}.
    
    For fundamental groups, changing the base point from $x$ to $y$ is equivalent to conjugating by a path between $x$ and $y$. Thus, if we instead consider tuples of matrices up to simultaneous conjugation by an element of $\GL_n(\C)$, we can view these as representations of $\pi_1(\Sigma_{T+1})$, without specifying a base point. On representations, this corresponds to the action of $\GL_n(\C)$ via $$g \cdot \rho = g \rho g^{-1}.$$
    Quotienting by this action gives us isomorphism classes of representations, and the resulting space is called the \textit{character variety} (see \Cref{charvar}). For tuples of matrices $(A_1, \dots, A_{T+1})$, we write $[A_1, \dots, A_{T+1}]$ for the corresponding equivalence class in the character variety. 

    A local system $\mathcal{L}$ with fiber isomorphic to a vector space $V$ on $\Sigma_{T+1}$ is a sheaf on $\Sigma_{T+1}$ locally isomorphic to the constant sheaf defined by $V$. At each point $x \in \Sigma_{T+1}$, we have an isomorphism of the stalk $\mathcal{L}_x \cong V$; the dimension of $V$ (as a vector space over $\C$) is called the \textit{rank} of the local system $\mathcal{L}$. For each loop $\gamma\colon [0, 1] \rightarrow \Sigma_{T+1}$ with $\gamma(0) = \gamma(1)$, we have isomorphisms $$ V \cong \mathcal{L}_{\gamma(0)} \cong  \mathcal{L}_{\gamma(1)} \cong V.$$ 
    The isomorphism $V \rightarrow V$ given by composing the above maps only depends on the homotopy class of $\gamma$ and corresponds to some invertible linear transformation $A \in \GL(V)$. We call $A$ the monodromy of $\mathcal{L}$ along $\gamma$.

    Given a local system $\mathcal{L}$ of rank $n$ on $\Sigma_{T+1}$, the monodromy along each generator $\gamma_i$ of $\pi_1(\Sigma_{T+1}, x)$ provides an element $A_i$ of $\GL_n(\C)$, and these $A_i$ are referred to as the local monodromy. The representation defined by $\rho(\gamma_i) = A_i$ is called the monodromy representation, and its image (as a subgroup of $\GL_n$) is called the monodromy group. Moreover, given a representation $\rho\colon \pi_1(\Sigma_{T+1},x) \rightarrow \GL_n(\C)$, a local system $\mathcal{L}$ with monodromy determined by $\rho$ can be constructed via the universal cover of $\Sigma_{T+1}$. We refer the reader to \cite{locsysexpo} for more exposition on local systems.

    The above correspondence between local systems and representations is well-behaved on isomorphism classes, so we get the following 3-way bijections: 
    \begin{align*}
        &\Bigl\{ \text{Isomorphism classes of rank $n$ local systems $\mathcal{L}$ on $\Sigma_{T+1}$}\Bigr\} \\
&\qquad\qquad\qquad\updownarrow \\ &\left\{ \text{Conjugacy classes of representations $\pi_1(\Sigma_{T+1}) \xrightarrow{\rho} \GL_n(\C)$} \right\} \\ &\qquad\qquad\qquad\updownarrow \\ &\left\{ \begin{array}{c}
\text{Simultaneous conjugacy classes $[A_1, \dots, A_{T+1}]$,} \\ \text{$A_i \in \GL_n(\C)$ with $\prod_{i=1}^{T+1} A_i = I$}  \end{array} \right\} = \Biggl\{\begin{array}{c} \text{Simultaneous conjugacy classes $[A_1, \dots, A_{T}]$},\\ \text{$A_i \in \GL_n(\C)$} \end{array}\Biggr\} \end{align*}

    \subsection{Mapping class group and middle convolution}\label{exp sec}
     On the character variety, we also have the following operations:
     \begin{enumerate}
     \item {\it Action of the mapping class group}: Associated to the surface $\Sigma_{T+1}$ is its mapping class group
       $$\Mod_{T+1} = \pi_0\left(\operatorname{Homeo}^{+}\left(\Sigma_{T+1}\right)\right),$$
       which has an action on representations of $\pi_1(\Sigma_{T+1})$ via the natural action on $\Sigma_{T+1}$. Note that since $\Mod_{T+1}$ typically does not preserve the base point of the fundamental group, we really do want to consider conjugacy classes of representations. 
         
         It is well known that $\Mod_{T+1}$ is isomorphic to the spherical braid group on $T+1$ strands, denoted $B_{T+1}$.  This group has $T$ generators $\sigma_1,  \dots, \sigma_{T}$ and acts on $(T+1)$-tuples in the following way: 
	$$\sigma_i\left(\left[A_1, A_2, \dots, A_T, A_{T+1}\right]\right) = \left[A_1, \dots, A_{i-1}, A_iA_{i+1}A_{i}^{-1}, A_{i}, A_{i+2}, \dots, A_{T+1}\right].$$	
    By the definition of this action, we see that $\sigma_i([A_1, A_2, \dots, A_T, A_{T+1}])$ will always be a tuple consisting of matrices from the group $\langle A_1, \dots, A_{T+1} \rangle$. Hence, whenever the $A_i$ all live in some finite group, the corresponding tuple will have finite orbit under this braid group action. 
    \item {\it Middle convolution}: Let $\on{LocSys}(\Sigma_{T+1})$ denote isomorphism classes of local systems on $\Sigma_{T+1}$ for any rank $n$. For each $\lambda \in \C^{\times} \setminus \{1 \}$, Katz \cite{katz1996} introduces an invertible functor called \textit{middle convolution}:
    \begin{align*}
       MC_{\lambda}\colon \on{LocSys}(\Sigma_{T+1}) \longrightarrow \on{LocSys}(\Sigma_{T+1}).
    \end{align*}
    This operation takes an isomorphism class of rank $n$ local systems on $\Sigma_{T+1}$ to an isomorphism class of rank $n'$ local systems on $\Sigma_{T+1}$. The rank $n'$ depends on $\lambda$ and on the conjugacy class of the local monodromy. For a more precise definition of middle convolution of local systems, see Lam--Landesman--Litt \cite[Definition 3.1.1]{LLL}.
    For tuples of matrices, the corresponding operation of middle convolution takes as input only the first $T$ matrices of the tuple and is discussed further in \Cref{mc}. This algebraic version of middle convolution, formulated for representations of free group on $T$ generators,  was introduced by Dettweiler--Reiter in \cite[Proposition~2.6]{KatzAlgo} and \cite{MC-L},  and is mainly what we use in this paper. Properties relevant to our work are that middle convolution is equivariant with respect to the action of the braid group, see \cite[Theorem~5.1 and Corollary~3.6]{KatzAlgo}, and preserves irreducibility, which we define below. 
     \end{enumerate}
     
	\begin{definition}
		 Representations, or equivalently tuples $[A_1, \cdots, A_{T+1}]$ with $\prod_{i = 1} ^{T+1} A_{i} = I$, in the character variety with finite orbit under the mapping class group action are called \emph{MCG-finite}. We say a tuple is \emph{irreducible} if the associated representation is irreducible.
	\end{definition}

	Recently, Lam--Landesman--Litt \cite{LLL} classified MCG-finite tuples of rank~2  with at least one matrix of infinite order.  They proved that such Zariski-dense representations come from pullbacks of curves or arise via middle convolution of local systems whose monodromy group is a \textit{finite complex reflection group}.
	
	The first kind, ``pullback'' representations, were classified by Diarra \cite[Sections 3--5]{diarra2013}. 
	
	Finite irreducible complex reflection groups, which can be viewed as generalizations of Coxeter groups, were classified by Shephard and Todd in 1954 as belonging to an infinite family $G(m, p, n)$ (imprimitive groups) for each rank $n$ or being one of 34 exceptional cases (primitive groups) denoted $G_3$ to $G_{37}$; see \cite{stfinite}. Each of these groups $W$ comes with a canonical finite-dimensional complex representation (given by reflection matrices acting on some vector space $V$), and the rank of $W$ is defined as the rank of this representation. The rank of $W$ will be the rank of the local system taken as input for middle convolution.
    
    In view of this classification, Lam--Landesman--Litt \cite[Corollary~5.1.4]{LLL} prove that for $T+1 \geq 7$, MCG-finite representations do not come from reflection groups of rank 5 or greater. Their paper leaves open for analysis the imprimitive groups of rank 3 and 4 and most of the primitive groups of rank 3 and 4, as well as the case when $T + 1 < 7$, which we investigate.

    One wants to classify remaining cases of MCG-finite rank 2 local systems on $\Sigma_{T+1}$, knowing that they come by middle convolution applied to certain rank $n$ local systems on $\Sigma_{T+1}$ with monodromy group a finite complex reflection group $W$. As a result, the issue is to classify those rank $n$ local systems on $\Sigma_{T+1}$ (or equivalently, $(T+1)$-tuples of $n \times n$ matrices multiplying to the identity that come from a rank $n$ complex reflection group $W$),
    which indeed give rise by middle convolution to a rank 2 local system on $\Sigma_{T+1}$ with finite MCG-orbit.

    Using that $(T+1)$-tuples which multiply to the identity are determined by the first $T$ matrices in the tuple, we restrict our attention to $T$-tuples in reflection groups with desirable properties. 

    	\begin{definition}\label{nice tup}
		A $T$-tuple $\Abf = [A_1, A_2, \dots, A_T]$ of matrices (defined up to simultaneous conjugation) from a finite complex reflection group $W$ is called \emph{nice} if $\Abf$ satisfies the following: 
		\begin{enumerate}[label = \arabic*., ref = {Condition~\arabic*}]
			\item All $A_i$ are reflections in $W$.
			\item \label{gen W cond} $W = \inn{A_1, A_2, \dots, A_T}$.
			\item \label{eigv cond}$(A_1A_2 \cdots A_T)^{-1}$ has an eigenvalue $\lambda \neq 1$ of multiplicity $T-2$. 
		\end{enumerate}
	\end{definition}

        Note that by~\ref{eigv cond}, nice tuples should never multiply to the identity, which is why we restrict to only the first $T$ matrices of the $(T+1)$-tuple corresponding to our rank $n$ local system on $\Sigma_{T+1}$. The reason for each of these conditions is explained in more detail in \Cref{methods}, but we briefly outline the idea here. 

	\begin{remark}\label{nice+ mcg-fin}
          Given a nice $T$-tuple $\mathbf{A} = [A_1, A_2, \dots, A_T]$, any $\lambda$ satisfying~\ref{eigv cond} ensures that $MC_{\lambda}(\Abf) = [\tilde{A}_1, \tilde{A}_2, \dots, \tilde{A}_T]$ is a $T$-tuple of rank 2 (see \Cref{mc}). Taking the $\supth{(T+1)}$
          matrix to be
          $$\tilde{A}_{T+1} = \left(\Pi_{i = 1}^T \tilde{A}_i\right)^{-1},$$
          we get a $(T+1)$-tuple $[\tilde{A}_1, \tilde{A}_2, \dots, \tilde{A}_T, \tilde{A}_{T+1}]$
          whose product is the identity, namely a rank 2 local system on $\Sigma_{T+1}$. As the MCG-orbit of $\mathbf{A}$ will consist of tuples with matrices from the group $\langle A_1, \dots, A_{T} \rangle$, which for nice tuples is the finite group $W$, we know that $\mathbf{A}$ is MCG-finite. Since middle convolution is equivariant with the braid group action (see Section~\ref{mc}), we know that $[\tilde{A}_1, \tilde{A}_2, \dots, \tilde{A}_T, \tilde{A}_{T+1}]$
          must be MCG-finite. Moreover, if $\{A_1, A_2, \dots, A_T\}$ generates an irreducible group, then, as middle convolution preserves irreducibility, we know our rank 2 $(T+1)$-tuple is irreducible as well. Thus, we have used a nice $T$-tuple of rank $n$, where $n = \rank (W)$, to produce an irreducible MCG-finite rank 2 local system on $\Sigma_{T+1}$. 
	\end{remark}

    We have written computer code for each step of the process described above. We start by searching finite complex reflection groups for nice tuples. Since middle convolution depends on a parameter $\lambda$, the same nice $T$-tuple can be used to produce nonisomorphic rank 2 local systems on $\Sigma_{T+1}$, as discussed in \Cref{same nice+ diff mcg}.
    Although Katz \cite{katz1996} originally introduced middle convolution in the language of perverse sheaves, one of our contributions is implementing computer code to explicitly compute the middle convolution given a tuple of matrices and parameter $\lambda \in \C^{\times}$, following the algebraic reformulation of Dettweiler--Reiter \cite[Section~2]{MC-L}. We also provide code for computing the braid group orbit for MCG-finite tuples in the  $\SL_2(\C)$-character variety of $\Sigma_{T+1}$, for $T = 3, 4$. All the computer code is publicly accessible online at \cite{Vay24}, and we hope it encourages further applications of this operation. \\

    We now provide a list of exclusions that this work will not treat:
    \begin{enumerate}[label = (\roman*), ref = Exclusion~(\roman*)]
        \item \label{I excl} We will not consider local systems on $\Sigma_{T+1}$ produced from local systems on $\Sigma_{T}$ by inserting the identity matrix into the $T$-tuple. This method ``artificially'' produces MCG-finite local systems on the sphere with more punctures, so we consider only tuples without any identity matrix.
        \item \label{scalar excl} We will not consider nice $T$-tuples $[A_1, \dots, A_T]$ from finite complex reflection groups such that their inverse product $A_{T+1} = (A_1 \cdots A_T)^{-1}$ is a scalar matrix $\lambda^{-1} I \in \GL_n$. 
        Writing $MC_{\lambda}(A_1, \dots, A_{T}) = [\tilde{A}_1, \dots, \tilde{A}_{T}]$, we have a local system of rank 2. Because of how eigenvalues transform under middle convolution (see \Cref{MC eigenvals}), we know that $\tilde{A}_{T+1} = (\prod_{i=1}^{T} \tilde{A_{i}})^{-1}$ will be the scalar $\lambda I \in \GL_2$. Multiplying by a character to put this tuple into $\SL_2$, we still have that the new $\supth{(T+1)}$ matrix $\tilde{A'}_{T+1}$ of the tuple is a scalar matrix. In $\SL_2$ the only scalar matrices are $I$ or $-I$. If $\tilde{A'}_{T+1} = I$, then we exclude it by~\eqref{I excl} above. If $\tilde{A'}_{T+1} = -I$, then for some nonscalar $\tilde{A'_j}$, $j \neq T+1$, we can construct an equivalent tuple by replacing $\tilde{A'_j}$ with $-\tilde{A'_j}$ and replacing $\tilde{A'}_{T+1}$ with $-\tilde{A'}_{T+1}$. But this means $\tilde{A'}_{T+1} = I$, and this tuple will be excluded. 
        \item \label{rk 2 excl} We do not search finite reflection groups of rank 2 for nice tuples. These can only produce local systems on $\Sigma_3$ or $\Sigma_4$, and we would be going from a rank 2 local system to another rank 2 local system, possibly with infinite monodromy group after middle convolution. For $\Sigma_3$, the action of the braid group factors through $S_3$ (see \cite[Proposition~2.3]{farbmarg}), so all orbits are MCG-finite. For $\Sigma_4$, rank 2 local systems over the 4-punctured sphere have been studied in connection to the Painlev\'e VI equation (see \Cref{subsec: prior work}), and finite orbits were classified by Lisovyy--Tykhyy \cite{T-L}. 
        \item \label{red excl} We do not consider nonirreducible tuples. These are classified by Cousin--Moussard; see \cite[Section~3]{cousinmouss}.
    \end{enumerate}

    With the above exclusions in mind, we discuss which cases remain to be considered. 

    One property of finite complex reflection groups (see \Cref{FCRG}) is that a group of rank $n$ is generated by either $n$ reflections or $n+1$ reflections. By the definition of nice tuples, the condition of generating the whole group (\ref{gen W cond}) tells us we need $n \leq T$, and since an $n \times n$ matrix cannot have an eigenvalue of multiplicity greater than $n$, the eigenvalue condition (\ref{eigv cond}) tells us $T-2 \leq n$. However, if $T-2 = n$, then $A_{T+1}$ will be a scalar matrix (as Jordan blocks with nonzero eigenvalue of size greater than $1$ do not have finite order), hence falls under~\ref{scalar excl}, so in fact we require $T-2 < n$, or equivalently $T < n+2$. Thus, for rank 2 local systems on $\Sigma_{T+1}$, we need only search for nice $T$-tuples in reflection groups of rank $n$ satisfying the relation 
    \begin{equation*} 
        n \leq T \leq n+1.
    \end{equation*}

    For $T+1 < 3$, $\pi_1(\Sigma_{T+1})$ is abelian. Hence, every representation is completely reducible, and there can be no irreducible local systems of rank 2 over $\Sigma_{T+1}$ (\ref{red excl}). When $T+1 = 3$,  we are searching the complex reflection groups for nice $2$-tuples,
    namely two reflections that generate the whole group. Any irreducible finite complex reflection group generated by two reflections will necessarily satisfy $\rank (W) = 2$ and is therefore excluded (\ref{rk 2 excl}). Hence, we may assume $T+1 \geq 4$.

    For an upper-bound on $T+1$, Lam--Landesman--Litt 
    \cite[Corollary~5.1.4]{LLL} prove that irreducible rank~2 MCG-finite local systems for $T+1 \geq 7$ cannot come from reflection groups of rank 5 or higher. The only solution to $T+1 \geq 7$ and $T-2 < 5$ is $T = 6$, but then $n$ $(= \rank (W))$ has to be greater than or equal to~5. So the only cases we need to consider are $3 < T+1 < 7$ and finite reflection groups $W$ of rank greater than~2 satisfying $\operatorname{rank} (W) = T-1$ or $\operatorname{rank} (W) = T$. 

    Thus, we search for nice tuples in finite complex reflection groups $W$ with $\operatorname{rank} (W) \in \{3, 4, 5\}$ with hopes of using middle convolution to produce rank 2 MCG-finite local systems on $\Sigma_{4}, \Sigma_{5}$, and $\Sigma_{6}$.
    
    We now note that the only rank 5 primitive group is $G_{33}$, also denoted as $W(K_5)$, which Lam--Landesman--Litt \cite[Lemma 5.4.1]{LLL} show cannot produce an irreducible rank 2 MCG-finite local system via middle convolution. 
	
    For imprimitive groups $W =G(m, p, n)$, the parameter $n$ denotes the rank of $W$.  The case where $n \geq 5$ is easiest to consider, as these groups do not contain any nice $n$-tuples. 

    \begin{theorem} \label{imprimthmlargen}
		There are no nice $n$-tuples in $G(m, p, n)$ for $n \geq 5$. 

    \end{theorem}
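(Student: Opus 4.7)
The plan splits into two ranges: $n \geq 6$ reduces to prior work, and $n = 5$ requires a direct combinatorial analysis. For $n \geq 6$, a nice $n$-tuple produces a rank-$2$ MCG-finite $(n+1)$-tuple on an $(n+1)$-punctured sphere, and since $n+1 \geq 7$, Corollary 5.1.4 of \cite{LLL} rules out reflection groups of rank $\geq 5$ as sources of such finite orbits; as $G(m,p,n)$ has rank $n \geq 6$, no nice $n$-tuple exists in this range.

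For $n = 5$, I would begin with the Dettweiler-Reiter rank formula for middle convolution. For a tuple $(A_1,\ldots,A_n) \in G(m,p,n)^n$ with parameter $\lambda \neq 1$ and $A_\infty = (A_1 \cdots A_n)^{-1}$, the output rank is
\[
N \;=\; \sum_{i=1}^{n} \mathrm{rank}(A_i - I) \;+\; \mathrm{rank}(A_\infty - \lambda I) \;-\; n.
\]
Imposing $N = 2$ together with $\mathrm{rank}(A_i - I) \geq 1$ forces a very restrictive distribution: the principal case is that all $A_i$ are reflections and $A_\infty$ possesses a $3$-dimensional $\lambda$-eigenspace, with degenerate alternatives having some $A_i$ of rank $\geq 2$ and $A_\infty$ having an even larger $\lambda$-eigenspace. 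I would then exploit the monomial structure of $G(m,p,5)$: any element has its eigenvalues distributed one-per-cycle (a cycle of length $c$ with scaling $\Sigma$ contributes the $c$ distinct $c$-th roots of $\Sigma$), so the multiplicity of $\lambda$ in $A_\infty$ is bounded above by the number of cycles of the underlying permutation $\sigma_\infty \in S_5$. This restricts $\sigma_\infty$ to one of the cycle types $(1^5)$, $(1^3,2)$, $(1^2,3)$, or $(1,2^2)$.

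Each $A_i$ has underlying permutation either the identity (Type 1, diagonal reflection) or a transposition (Type 2). Irreducibility of the representation forces the Type 2 underlying transpositions to generate a transitive subgroup of $S_5$, so at least four distinct transpositions must appear among the $\sigma_i$; combined with the parity constraint $\mathrm{sgn}(\sigma_\infty) = (-1)^k$ where $k$ is the number of Type 2 reflections, the admissible configurations shrink dramatically. The main obstacle is the ensuing case analysis: the cleanest sub-case $\sigma_\infty = e$ is dispatched by the classical fact that four distinct transpositions forming a spanning tree of $S_5$ multiply, in any order, to a $5$-cycle rather than the identity; the remaining three cycle types and the rank-degenerate alternatives require parallel but more involved verifications, balancing parity, transitivity, the scaling conditions needed for $\lambda$ to be an eigenvalue of each contributing cycle, and the rigidity imposed by the monomial structure on $G(m,p,5)$.
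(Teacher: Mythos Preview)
Your approach shares the key observation with the paper---eigenvalue multiplicity for a monomial matrix is bounded by the number of cycles of its underlying permutation---but the $n=5$ analysis is left incomplete, and the split into $n\geq 6$ versus $n=5$ obscures a uniform argument.

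The paper does not invoke \cite{LLL} here and does not separate $n=5$ from larger $n$. It first notes (via Theorem~\ref{GenSet}) that only the well-generated groups $G(m,1,n)$ and $G(m,m,n)$ can admit nice $n$-tuples, since the others need $n+1$ reflections to generate. It then determines the cycle type of $\varphi(A_1\cdots A_n)$ directly from the generating-set structure. In $G(m,1,n)$ the $n-1$ transposition-type reflections must form a spanning tree on $n$ vertices, so their product is an $n$-cycle and the characteristic polynomial of $A$ is $(-1)^n(x^n-\zeta)$, with all roots distinct. In $G(m,m,n)$ the $n$ transpositions form a tree plus one extra edge; a short lemma shows that an $n$-cycle times a transposition always has cycle type $(c,\,n-c)$, so the characteristic polynomial factors as $(-1)^n(x^c-\zeta_1)(x^{n-c}-\zeta_2)$ and every eigenvalue has multiplicity at most $2$. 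In both cases the required multiplicity $n-2\geq 3$ is impossible, uniformly for all $n\geq 5$.

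Your $n=5$ plan runs this backward: you list the cycle types of $\sigma_\infty$ with at least three cycles and try to exclude each. You only handle $\sigma_\infty=e$ (and only in the four-transposition sub-case), leaving the rest to ``more involved verifications''. These can indeed be completed---parity pins down whether $k=4$ or $k=5$, and then the same spanning-tree fact you cite forces the product to be a $5$-cycle (for $k=4$) or of type $(c,\,5-c)$ (for $k=5$), contradicting every one of your four target types---but at that point you have simply reproduced the paper's argument in a more roundabout order. Two further remarks: your ``degenerate alternatives'' with $\mathrm{rank}(A_i-I)\geq 2$ are spurious, since Definition~\ref{nice tup} already requires every $A_i$ to be a reflection; and your Type~1/Type~2 labelling is swapped relative to the paper's conventions.
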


    The next two theorems pertain to the imprimitive groups of rank $3$ and $4$. In these groups we either prove no nice tuples exist or provide a system of equations for an infinite family of solutions. \Cref{imprim} provides the proofs of the following theorems, as well as conditions on the eigenvalues of the product of the matrices in the tuple, or equivalently, conditions on the parameter $\lambda$ that will be used for middle convolution whenever nice tuples are shown to exist.

    \begin{theorem}\label{rank 3 imprim}
	  In the rank $3$ imprimitive groups, there are no nice $3$-tuples in $G(m, p, 3)$ if $p \notin \{1, m\}$, and an infinite family of nice $3$-tuples exists in $G(m, 1, 3)$ and $G(m, m, 3)$ for all $m >1$. Nice $4$-tuples exist in $W = G(m, p, 3)$ for $m >1$ if and only if\, $W$ is one of the following groups: 
		\begin{enumerate}
			\item $G(m, m, 3)$ with $m \neq 3$,   
			\item $G(m, 1, 3)$ with $\gcd(m, 3) = 1$,  
			\item $G(3k, 3, 3)$ with $k > 1$. 
		\end{enumerate}
		In all three cases the parameter for middle convolution is a root of unity of order $m$.
	\end{theorem}

    \begin{theorem} \label{rank 4 imprim}
		In the rank $4$ imprimitive groups, nice $4$-tuples and nice $5$-tuples exist in $W = G(m, p, 4)$ if and only if\, $W$ is one of the following groups: 
        \begin{enumerate}
            \item $G(2, 2, 4)$, 
            \item $G(4, 4, 4)$.
        \end{enumerate}
	\end{theorem}	

    The next collection of theorems all pertain to primitive reflection groups of rank 3 and 4. For these groups, we performed a computer search for nice tuples and analyzed our findings. In our computations nice tuples within each group were partitioned into different \emph{types} based on eigenvalues (see \Cref{type def} and \Cref{type rmk}), and we counted the nice tuples in each group by type. We summarize the distinct rank~2 MCG-finite tuples (up to simultaneous conjugation and braid group equivalence) produced by taking middle convolution of the different nice tuples (up to type equivalence). We also note whether any of the tuples generate an infinite subgroup of $\SL_2$, providing examples of MCG-finite local systems with infinite monodromy group.  

    \begin{theorem}
        All primitive reflection groups of rank $3$ contain nice $3$-tuples, and all but $G_{24}$ and $G_{26}$ contain nice $4$-tuples. Tables~\ref{tab: Prim Rank 3 of 3-tup Data Table} and~\ref{tab: Prim Rank 4 of 4-tup Data Table} summarize the data before and after applying middle convolution. 
    \end{theorem}

    \begin{table}[h]
\centering
\begin{tabular}{|c|c|c|c|}
\hline
\multicolumn{4}{|c|}{Summary of MCG-finite rank 2 local systems on $\Sigma_4$ produced from rank 3 primitive groups} \rule[0pt]{0pt}{3ex}\\ 
\hline
\shortstack{Rank 3 \\ primitive group} &
  \# Nice 3-tuples &
  \# MCG-finite rank 2 tuples & 
  \shortstack{ \phantom{aaaaaaaaaaaaaaa} \\ \# Tuples generating infinite \\ subgroup of $\SL_2(\C)$} \\ [1ex]
\hline
$G_{23}$ & 3  &  6  &
  3 \\
\hline
$G_{24}$ &
  1 & 3 & 3\\
\hline
$G_{25}$ & 4 &
  8 & 5 \\
\hline
$G_{26}$ & 3 & 7 & 5 \\
\hline
$G_{27}$ & 3 & 9 & 6 \\
\hline
\end{tabular}
\caption{}
\label{tab: Prim Rank 3 of 3-tup Data Table}
\vspace{1ex}
\centering
\begin{tabular}{|c|c|c|c|}
\hline
\multicolumn{4}{|c|}{Summary of MCG-finite rank 2 local systems on $\Sigma_5$ produced from rank 3 primitive groups} \rule[0pt]{0pt}{3ex}\\ 
\hline
\shortstack{Rank 3 \\ primitive group} &
  \# Nice 4-tuples &
  \# MCG-finite rank 2 tuples & 
   \shortstack{ \phantom{aaaaaaaaaaaaaaa} \\ \# Tuples generating infinite \\ subgroup of $\SL_2(\C)$} \\ [1ex]
\hline
$G_{23}$ & 1  &  1  &
  1 \\
\hline
$G_{24}$ &
  NA & - & -\\
\hline
$G_{25}$ & 3 &
  2 & 1 \\
\hline
$G_{26}$ & NA & - & - \\
\hline
$G_{27}$ & 1 & 1 & 0 \\
\hline
\end{tabular}
\caption{}
\label{tab: Prim Rank 4 of 4-tup Data Table}
\end{table}

    \begin{remark} \label{same nice+ diff mcg}
	For some nice $T$-tuples, it is possible that several eigenvalues $\lambda$ have multiplicity $T-2$ (\ref{eigv cond}). Hence, the different choices of parameter $\lambda$ give rise to possibly different rank 2 MCG-finite $(T+1)$-tuples after middle convolution. For example, when $T= 3$, we only need eigenvalues of multiplicity~1, which is why each type of nice 3-tuple can produce up to three distinct MCG-finite 4-tuples. It is also possible that different eigenvalues or different types produce equivalent MCG-finite tuples after middle convolution, resulting in fewer MCG-finite tuples than nice tuples. An example of this is the nice 4-tuples produced from the rank 3 group $G_{25}$.
\end{remark}

    Now we discuss primitive groups of rank 4. 

    \begin{theorem}
        In $G_{28}$ there is only one nice $4$-tuple, which produces one MCG-finite $5$-tuple after middle convolution. There are no nice $5$-tuples in this group. 
    \end{theorem}
    
    \begin{theorem}
        In $G_{29}$ and $G_{31}$ there are no nice tuples. 
    \end{theorem}

    For the remaining two primitive groups of rank 4, some of the nice tuples exhibit interesting behaviour. After applying middle convolution, when we put the matrices into $\SL_2(\C)$, we have that one of the matrices of the MCG-finite $(T+1)$-tuple becomes the identity (\ref{I excl}). This means they come from MCG-finite local systems on $\Sigma_T$ (\textit{i.e.}~MCG-finite $T$-tuples) by the operation of inserting the identity matrix as the $\supth{(T+1)}$ matrix. In this way we get that our nice $T$-tuples produce MCG-finite $T$-tuples of rank 2 rather than the expected MCG-finite $(T+1)$-tuples of rank 2.

    \begin{theorem}
        In $G_{30}$ there are three nice $4$-tuples, each producing one distinct MCG-finite $5$-tuple after middle convolution. All three of these $5$-tuples comes from $4$-tuples by inserting the identity, so the nice $4$-tuples in this group produce MCG-finite $4$-tuples and no MCG-finite $5$-tuples. There are no nice $5$-tuples in this group. 
    \end{theorem}

    \begin{theorem}
        In $G_{32}$ there are three nice $4$-tuples, resulting in four distinct MCG-finite $5$-tuples after middle convolution. One of these $5$-tuples generates an infinite subgroup. Two of the remaining three $5$-tuples come from MCG-finite $4$-tuples by inserting the identity matrix. This group also contains at least two nice $5$-tuples, producing at least one example of an MCG-finite $6$-tuple.  The $6$-tuple does not generate an infinite subgroup. In total $G_{32}$ produced two MCG-finite $4$-tuples, two MCG-finite $5$-tuples, and at least one MCG-finite $6$-tuple. 
    \end{theorem}

    Note that the search for nice 5-tuples in $G_{32}$ was too large computationally, so this list may be incomplete. 
    For each nice tuple in the primitive groups, we provide representatives (up to simultaneous conjugation and braid group orbit equivalence) before (\Cref{prim}) and after middle convolution (in Appendices~\ref{appendix A},~\ref{appendix B}, and~\ref{appendix C}). We also list the size of the subgroup generated by the rank 2 matrices (after middle convolution), and in most cases we provide an approximate ``size'' for the braid group orbit in the appendices.

    A consolidated summary of which finite complex reflection groups contain nice tuples that can be used to produce MCG-finite rank 2 local systems on $\Sigma_{T+1}$ is given in Tables~\ref{tab:Imprimitive Data Table} and~\ref{tab:Primitive Data Table}.
  
\begin{table}[ht]
\centering
{\renewcommand{\arraystretch}{1.2}
\begin{tabular}{|c|c|c|c|}
\hline
Rank($W$) &
  \shortstack{ \phantom{aaaaaaaaaaaaaaa} \\ Imprimitive groups that \\ product MCG-finite rank 2 \\ local systems on $\Sigma_{4}$} &
  \shortstack{ \phantom{aaaaaaaaaaaaaaa} \\ Imprimitive groups that \\ product MCG-finite rank 2 \\ local systems on $\Sigma_{5}$} &
  \shortstack{ \phantom{aaaaaaaaaaaaaaa} \\ Imprimitive groups that \\ product MCG-finite rank 2 \\ local systems on $\Sigma_{6}$} \\ [1ex]
\hline
3 & \multicolumn{1}{c|}{$G(m, 1, 3), G(m, m, 3)$}  &  \shortstack{ \phantom{a} \\ $G(m, m, 3)$, for $m\neq 3$, \\ $G(m, 1, 3)$, for $\gcd(m, 3) = 1$\\ $G(m, 3, 3)$, $m \neq 3$ }  &
  NA \\ [1ex]
\hline
4 &
  NA &
  $G(2,2, 4), G(4, 4, 4)$ &
  $G(2, 2, 4), G(4, 4, 4)$ \\
\hline
$\geq 5$ &
  NA &
  NA &
  NA \\
\hline
\end{tabular}
}
\caption{For all the above, we assume $m > 1$.}
\label{tab:Imprimitive Data Table}
\vspace{2ex}
\centering
{\renewcommand{\arraystretch}{1.2}
\begin{tabular}{|c|c|c|c|}
\hline
\shortstack{Finite complex \\ reflection groups \\ \phantom{aaaaaaaaaaaaaaa}} & \shortstack{ \phantom{aaaaaaaaaaaaaaa} \\ \#MCG-finite rank 2 \\ local systems produced \\ on $\Sigma_{4}$} &
  \shortstack{ \phantom{aaaaaaaaaaaaaaa} \\ \#MCG-finite rank 2 \\ local systems produced \\ on $\Sigma_{5}$} &
  \shortstack{ \phantom{aaaaaaaaaaaaaaa} \\ \#MCG-finite rank 2 \\ local systems produced \\ on $\Sigma_{6}$} \\ \hline
$G_{23}$ & 6 & 1 & 0          \\ \hline
$G_{24}$ & 3 & 0 & 0          \\ \hline
$G_{25}$ & 8 & 2 & 0          \\ \hline
$G_{26}$ & 7 & 0 & 0          \\ \hline
$G_{27}$ & 9 & 1 & 0          \\ \hline
$G_{28}$ & 0 & 1 & 0          \\ \hline
$G_{29}$ & 0 & 0 & 0          \\ \hline
$G_{30}$ & 3 & 0 & 0          \\ \hline
$G_{31}$ & 0 & 0 & 0          \\ \hline
$G_{32}$ & 2 & 2 & At least 1 \\ \hline
\end{tabular}
\captionsetup[table]{labelsep=none}
\caption{}
\label{tab:Primitive Data Table} }
\end{table}

	\subsection{Prior work} \label{subsec: prior work}
	MCG-finite representations correspond to algebraic solutions to nonlinear differential equations, such as the Painlev\'e VI equation and more generally the  Schlesinger equations; see \cite{cou17}.
	Regarded as a nonlinear analogue of the hypergeometric equation, the Painlev\'e VI (PVI) equation, posed over 100 years ago, has inspired much work on the study of finite orbits. Algebraic solutions to the PVI equation correspond to irreducible MCG-finite representations when $n = 2$ and $T+1 = 4$ and motivate progress towards a nonlinear Schwartz list; see \cite{boalch2007towards}.   Among the many advances made in this area, notable contributions on finite orbits come from Hitchin \cite{hitchinponcelet}, Dubrovin--Mazzocco \cite{dubromazz2000}, Boalch \cite{boalch2005klein}, Kitaev \cite{kitaev2006}, and an eventual computer-aided classification by Lisovyy--Tykhyy \cite{T-L}. The history and further details regarding the 45 equivalence classes of exceptional algebraic solutions of PVI can be found in \cite{P6survey}; see Slide~18 of \textit{op.~cit.}~and \cite[Section~11.7]{boalch2007towards} for a summary. Research on finite orbits of the character variety is still ongoing; see \cite{Gol2024,calli2018}, and more recently \cite{bronsteinmaret}.
	
	The work of Dubrovin--Mazzocco originally studied the PVI equation in connection with reflection groups; see  \cite{dubromazz2000}. Expanding on this idea,  solutions constructed from finite reflection groups through special cases of middle convolution, like the ``Okamoto transform'',  were explored by others in \cite{boalch2005klein,okamototrans,dettweilerpainleve}. More generally, a computer-aided classification was provided by Tykhyy in \cite{Tyk}, with explicit orbits listed when $T+1 =4, 5$ and conjectural descriptions of the different possibilities provided for $T+1 \geq 6$.  We found exact matches for orbits given in this list, except in two cases where the closest match differs in just one choice of parameter. As that parameter is considered up to sign, we believe our orbits still belong to those close-match orbits. Details on these two tuples are provided in Sections~\ref{G_{25} 3-tups} and~\ref{G_{26} 3-tups}, and the differing coordinates are marked with an asterisk in the tables.
	
	The connection between finite complex reflection groups and middle convolutions was further explored more recently in the work of Lam--Landesman--Litt \cite[Corollary~1.1.7]{LLL}, who proved that all MCG-finite tuples with at least one matrix of infinite order  either are pullback representations (classified in \cite{diarra2013}) or come from finite complex reflection groups via middle convolution. 
	
	Our work investigates the finite complex reflection groups for MCG-finite tuples and examines their middle convolution in relation to the braid group orbit. In the applicable cases, we also explore how the examples we find relate to the classifications provided by Lisovyy--Tykhyy \cite{T-L} and Tykhyy \cite{Tyk}. Note that the analysis in these two papers excludes the tuples where all the matrices share a common eigenvector; these correspond to nonirreducible cases and were classified by Cousin--Moussard; see \cite[Section 3]{cousinmouss}. 

    Released shortly after our own work was posted to arXiv, theoretical results by Bronstein--Maret \cite{bronsteinmaret} completed the classification of finite orbits by studying the case where all local monodromies have finite order. In light of their research and the work before them, the tuples we provide serve as concrete examples of representations with finite orbits. 

    \section{Background}

    \subsection{Middle convolution} \label{mc}
	Defined by Katz \cite{katz1996}, middle convolution  is an invertible functor in the category of perverse sheaves depending on some parameter $\lambda \in \C^{\times}$ and is denoted $MC_{\lambda}$. Middle convolution can also be viewed as a functor in the category of local systems on the punctured sphere to itself, and with this interpretation, Katz proved that all irreducible rigid local systems on the punctured sphere can be constructed from rank 1 local systems via iteratively applying scaling and middle convolution operations.  
	
	Letting $k$ be a field and $F_{T}$ denote the free group on $T$ generators, middle convolution can equivalently be understood as a functor on the category of finite-dimensional $k[F_{T}]$-modules to itself as described by Dettweiler--Reiter \cite[Proposition~2.6]{KatzAlgo}. The relation to local systems on the $(T+1)$-punctured sphere is by taking $k = \C$ and viewing a tuple of $T$ matrices as defining the action of $\C[F_T]$ on the module, \textit{i.e.}~a representation of $\C[F_T]$. In this way irreducible tuples correspond to irreducible representations, which correspond to local systems.
	While middle convolution preserves irreducibility, see \cite[Corollary~3.6]{KatzAlgo}, it typically changes the dimension of the module (rank of the local system on the $(T+1)$-punctured sphere) and does not preserve basic properties, \textit{e.g.}~finiteness of the subgroup generated by the matrices, or equivalently, the monodromy group of the associated local system. Important to the study of finite orbits, one desirable property of middle convolution is that it is equivariant with the action of the braid group $B_{T+1}$; see \cite[Theorem 5.1]{KatzAlgo}. One way to construct nontrivial finite orbits is to take irreducible tuples generating a finite subgroup, hence finite braid group orbit, and taking the middle convolution to (hopefully) get a tuple generating an infinite group which will necessarily be irreducible and have finite braid group orbit. 
	
	The purely algebraic analogue by Dettweiler--Reiter \cite[Definition 2.5]{KatzAlgo} gives an explicit description of middle convolution in terms of tuples of matrices and relates some properties of middle convolution to the matrices in the tuple.

        \begin{definition}
		Given a tuple of $T$ matrices $\Abf = [A_1, A_2, \dots, A_T]$ with $A_i \in \GL_n(\C)$, the \emph{product} of such a tuple is the product of the matrices in the tuple, in the ordering given by the tuple. So the product of the tuple $\mathbf{A}$ is $A_1A_2\cdots A_T$. The \emph{inverse} of the tuple $\Abf$ is the tuple $[A_T^{-1}, \dots, A_1^{-1}]$, denoted $\mathbf{A^{-1}}$. The \emph{inverse product} of a tuple is the matrix product of the inverse tuple.
	\end{definition}

        \begin{definition}
		Given a tuple of $T$ matrices $\Abf = [A_1, A_2, \dots, A_T]$ with $A_i \in \GL_n(\C)$, we denote the middle convolution of $\mathbf{A}$ with parameter $\lambda \in \C^{\times}$ as $MC_{\lambda}(\Abf) = [\tilde{A}_1, \tilde{A}_2, \dots, \tilde{A}_T]$. Note that $MC_{\lambda}(\Abf)$ will again be a tuple of $T$ matrices, well-defined up to simultaneous conjugation, with $\tilde{A_i} \in \GL_{n'}(\C)$ for some $n'$ not necessarily equal to $n$. 
	\end{definition}
	
	\begin{remark}\label{mc no prod}
		Middle convolution does not preserve the product of the matrices as it is only well-defined up to simultaneous conjugation. 
	\end{remark}
    
	Let $V$ denote the underlying vector space associated to the $\C[F_{T}]$ module defined by a tuple $\mathbf{A}$. Then the relation between $n$ and $n'$ is as follows. 	

        \begin{theorem}\label{gen_prop of MC}
		Let $V$ denote the $\C[F_{T}]$-module defined by the irreducible tuple $\Abf = [A_1, A_2, \dots, A_T]$, and let $\lambda \in \C^{\times}$. 
	 If $\lambda \neq 1$, then
                  $$ \dim (MC_{\lambda}(V)) = \sum_{i = 1}^{T} \on{rk}(A_i -I) - (\dim(V) -\on{rk} (\lambda \cdot A_1A_2 \cdots A_T -I)). $$
	\end{theorem}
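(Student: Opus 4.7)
The plan is to unpack the Dettweiler--Reiter algebraic realization of $MC_\lambda$ as a quotient of $V^T$ and then read off dimensions directly. Following Dettweiler--Reiter, $MC_\lambda(V)$ is realized as a quotient $W/(\mathcal{K}+\mathcal{L})$ of the $T$-fold direct sum $W = V^T$, where $\mathcal{K}$ and $\mathcal{L}$ are two subspaces invariant under auxiliary matrices $B_1, \ldots, B_T \in GL(W)$ built blockwise from $\mathbf{A}$ and $\lambda$. Explicitly, $\mathcal{K} = \bigoplus_{i=1}^T \ker(A_i - I)$ with the $i$-th summand sitting in the $i$-th block of $W$, and $\mathcal{L} = \bigcap_{i=1}^T \ker(B_i - I_W)$.

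From the blockwise direct-sum structure one immediately reads off
\[
\dim \mathcal{K} = \sum_{i=1}^T (\dim V - \on{rank}(A_i - I)) = T \dim V - \sum_{i=1}^T \on{rank}(A_i - I).
\]
For $\mathcal{L}$, writing out the defining linear equations for each $\ker(B_k - I)$ and subtracting pairs of them produces a factor $(\lambda - 1)$ on the differences of the block-components of any $w \in \mathcal{L}$; when $\lambda \neq 1$ this forces every $w \in \mathcal{L}$ to be determined by a single ``seed'' vector $v \in V$. Substituting back into any single relation and telescoping through the explicit block form of the $B_k$ collapses the system into the one condition $(\lambda A_1 A_2 \cdots A_T - I)v = 0$, yielding an isomorphism $\mathcal{L} \cong \ker(\lambda A_1 \cdots A_T - I)$ and hence
\[
\dim \mathcal{L} = \dim V - \on{rank}(\lambda A_1 \cdots A_T - I).
\]

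Next I would verify $\mathcal{K} \cap \mathcal{L} = 0$: a vector in the intersection has its seed $v$ simultaneously fixed by every $A_i$ (so $A_1 \cdots A_T v = v$, putting it in $\mathcal{K}$) and satisfies $\lambda A_1 \cdots A_T v = v$ (putting it in $\mathcal{L}$); subtracting gives $(1-\lambda)v = 0$, so $v = 0$ under $\lambda \neq 1$. Thus $\dim(\mathcal{K} + \mathcal{L}) = \dim \mathcal{K} + \dim \mathcal{L}$, and
\[
\dim MC_\lambda(V) = T \dim V - \dim(\mathcal{K} + \mathcal{L}) = \sum_{i=1}^T \on{rank}(A_i - I) - \bigl(\dim V - \on{rank}(\lambda A_1 \cdots A_T - I)\bigr),
\]
as desired. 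The main obstacle is the identification of $\mathcal{L}$ with $\ker(\lambda A_1 \cdots A_T - I)$: this is where the hypothesis $\lambda \neq 1$ is essential (it makes the pairwise-difference argument collapse to a one-parameter family) and where the global product $A_1 \cdots A_T$ emerges naturally from the local relations defining the $B_k$. Once that identification is in hand, everything else is rank-nullity bookkeeping.
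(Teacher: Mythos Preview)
The paper does not supply its own proof of this statement; it simply cites Dettweiler--Reiter \cite[Section~2]{MC-L} and \cite[Sections~2--5]{KatzAlgo}. Your proposal is correct and is precisely the argument contained in those references: the computations of $\dim\mathcal{K}$, the identification $\mathcal{L}\cong\ker(\lambda A_1\cdots A_T - I)$ via the pairwise-difference trick (which is where $\lambda\neq 1$ enters), and the verification $\mathcal{K}\cap\mathcal{L}=0$ are exactly Dettweiler--Reiter's Lemma~2.7/Corollary~2.8 unpacked.

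One small remark on the $\mathcal{K}\cap\mathcal{L}=0$ step: being in $\mathcal{K}$ says each block $w_i$ is fixed by $A_i$, not a~priori that the seed $v=w_T$ is fixed by every $A_i$. You need the intermediate observation that the $\mathcal{L}$-relations $w_k = A_{k+1}w_{k+1}$ combined with $A_{k+1}w_{k+1}=w_{k+1}$ force all $w_k$ to coincide; then the rest of your argument goes through verbatim.
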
 

        \begin{proof}
		The proof of this statement, as well as further details and properties of the middle convolution, can be found in \cite[Section 2]{MC-L} and \cite[Section 2.5]{KatzAlgo}. 
	\end{proof}

        \begin{remark} \label{rem: diff meanings of rank}
        Note that in the above, when we write $\on{rk} (A)$ for $A$ a matrix, we mean the dimension of its image subspace, \textit{i.e.}~as in the rank-nullity theorem. When we write $\rank (W) $ for $W$ a finite complex reflection group, we mean the rank of the canonical representation of $W$ provided by reflections on the vector space~$V$, so $\rank (W)  = \dim V$. When we simply write rank $n$ or rank 2, we mean the rank of the local system on $\Sigma_{T+1}$, or equivalently the rank of the associated representation of $\pi_1(\Sigma_{T+1})$. 
    \end{remark}

        In light of \Cref{gen_prop of MC}, to get an MCG-finite $(T+1)$-tuple of rank 2, we want to find a $T$-tuple of reflections (hence $\on{rk}(A_i -I) = 1$) such that the tuple product has a nontrivial eigenvalue of multiplicity $T-2$. This is why we search for \textit{nice tuples}; see Definition~\ref{nice tup}. 

    \begin{theorem}[Dettweiler--Reiter, 2007] \label{MC eigenvals}
        Let $\lambda \in \C^{\times}$ and $MC_{\lambda}(A_1, \dots, A_T) = [\tilde{A}_1, \dots, \tilde{A}_T]$. Let $A_{T+1}$, $\tilde{A}_{T+1}$ be the inverse products of the tuples $\mathbf{A}$ and $\mathbf{\tilde{A}}$, respectively. Writing $J(\alpha, l)$ for the Jordan block of size $l$ with eigenvalue~$\alpha$, the eigenvalues before and after middle convolution are related as follows:
        \begin{romanenum}
            \item Every Jordan block $J(\alpha, l)$ in the Jordan decomposition of $A_{k}$ contributes a Jordan block $J(\alpha \lambda, l')$ to the Jordan decomposition of $\tilde{A}_k$, where  
            \begin{align*}
            l' := \begin{cases} 
            l &\text{if\, $\alpha \neq 1, \lambda^{-1}$}, \\
            l -1 &\text{if\, $\alpha = 1$}, \\
            l+1 & \text{if\, $\alpha = \lambda^{-1}$}.
            \end{cases}
            \end{align*}
            The only other Jordan blocks in the Jordan decomposition of $\tilde{A}_k$ is $J(1, 1)$. 
            \item Every Jordan block $J(\alpha, l)$ in the Jordan decomposition of $A_{T+1} = (A_1A_2\cdots A_T)^{-1}$ contributes a Jordan block $J(\alpha \lambda^{-1}, l')$ to the Jordan decomposition of $\tilde{A}_{T+1} = (\tilde{A}_1 \tilde{A}_2\cdots \tilde{A}_T)^{-1}$, where  
            \begin{align*}
            l' := \begin{cases} 
            l &\text{if\, $\alpha \neq 1, \lambda$}, \\
            l +1 &\text{if\, $\alpha = 1$}, \\
            l-1 & \text{if\, $\alpha = \lambda$}.
            \end{cases}
            \end{align*}
            The only other Jordan blocks in the Jordan decomposition of $\tilde{A}_{T+1}$ is $J(\lambda^{-1}, 1)$. 
        \end{romanenum}
    \end{theorem}

    \begin{proof}
        This statement in terms of the Jordan decomposition is a reformulation of \cite[Lemma 5.1]{dettweilerpainleve} and follows from \cite[Lemma 4.1]{KatzAlgo}. See also \cite[Proposition~3.2.2]{LLL}.
    \end{proof}
    
	Another property of middle convolution that will greatly simplify our computations is the following. 

	\begin{theorem}[Dettweiler--Reiter, 2000]\label{Inverse MC Thm}
		If $\mathbf{A}$ is an irreducible tuple of rank greater than $1$, then the modules $MC_{\lambda^{-1}}(\mathbf{A^{-1}})$ and $MC_{\lambda}(\mathbf{A})^{-1}$ are isomorphic. 
	\end{theorem}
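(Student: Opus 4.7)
The plan is to verify the claim by a direct comparison using the Dettweiler--Reiter block-matrix presentation of middle convolution, constructing an explicit intertwining isomorphism. Recall that for a $T$-tuple $\mathbf{A} = [A_1, \ldots, A_T]$ acting on $V$, Dettweiler--Reiter produces block matrices $B_1(\lambda, \mathbf{A}), \ldots, B_T(\lambda, \mathbf{A}) \in GL(V^T)$---each equal to the identity outside its $i$-th row of blocks, whose entries are explicit expressions in $\lambda$ and the $A_j$---and $MC_\lambda(\mathbf{A})$ is the tuple induced on the quotient $V^T / (\mathcal{K} + \mathcal{L})$, with $\mathcal{K} = \bigoplus_i \ker(A_i - I)$ and $\mathcal{L} = \bigcap_i \ker(B_i - I)$. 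So the task reduces to producing a linear isomorphism $\phi : V^T \to V^T$ that conjugates the block data for $(\mathbf{A}^{-1}, \lambda^{-1})$ to that of the inverse tuple of $(\mathbf{A}, \lambda)$, and that sends one copy of $\mathcal{K} + \mathcal{L}$ to the other.

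First I would write both sides out block by block. On one hand, $MC_\lambda(\mathbf{A})^{-1} = [\tilde{A}_T^{-1}, \ldots, \tilde{A}_1^{-1}]$ is induced by inverting each $B_i(\lambda, \mathbf{A})$ and reversing the indices. On the other hand, $MC_{\lambda^{-1}}(\mathbf{A}^{-1})$ is induced by the blocks $B_i(\lambda^{-1}, [A_T^{-1}, \ldots, A_1^{-1}])$. The candidate intertwiner is the order-reversing map $\phi(v_1, \ldots, v_T) = (v_T, \ldots, v_1)$, twisted by a block-diagonal scaling chosen to absorb the asymmetric powers of $\lambda$ that appear above versus below the diagonal in Dettweiler--Reiter's formula; identifying the correct twist is an entry-by-entry comparison that repeatedly uses $\ker(A - I) = \ker(A^{-1} - I)$.

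Matching the subspaces $\mathcal{K}$ on both sides is then immediate from this kernel identity combined with the reversal of indices. The main obstacle is matching $\mathcal{L}$, since it is defined as a global intersection of fixed spaces rather than as a direct sum and so its image under a block-reversal is not transparent. Here I would appeal to irreducibility: Theorem \ref{gen_prop of MC} pins down the dimension of the middle convolution (hence the codimension of $\mathcal{K} + \mathcal{L}$) on each side, and preservation of irreducibility by $MC_\lambda$ (\cite[Cor.~3.6]{KatzAlgo}) together with a dimension count promotes the a-priori inclusion $\phi(\mathcal{L}') \subseteq \mathcal{L}$ coming from the intertwining into an equality. The hypothesis that $\mathbf{A}$ is irreducible of rank greater than one is what rules out degenerate cases where $\mathcal{L}$ could contain extra fixed vectors and break the codimension match, and this is precisely where the hypothesis in the statement is used.
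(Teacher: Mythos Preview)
The paper does not give its own proof of this statement: it simply cites \cite[Theorem 5.5]{KatzAlgo} and says ``Proved in \cite[Theorem 5.5]{KatzAlgo}.'' So there is nothing substantive to compare against on the paper's side.

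Your proposal goes well beyond this by sketching an actual argument in the Dettweiler--Reiter block-matrix model, and the overall strategy---an order-reversing intertwiner on $V^T$ twisted by a block-diagonal scaling, followed by matching $\mathcal{K}$ via $\ker(A-I)=\ker(A^{-1}-I)$ and $\mathcal{L}$ via a dimension count---is a plausible route and is in the spirit of how Dettweiler--Reiter handle such identities. One point where your sketch is thin: you assert an ``a-priori inclusion $\phi(\mathcal{L}')\subseteq\mathcal{L}$ coming from the intertwining,'' but the intertwining relation you have is between $B_i(\lambda^{-1},\mathbf{A}^{-1})$ and the $B_j(\lambda,\mathbf{A})^{-1}$ (reindexed), and the fixed space of $B_j$ equals the fixed space of $B_j^{-1}$, so in fact $\phi$ should carry $\mathcal{L}'$ \emph{onto} $\mathcal{L}$ directly, without needing the dimension/irreducibility argument you invoke. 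The irreducibility and rank-$>1$ hypotheses in Dettweiler--Reiter are there to guarantee that $\mathcal{K}\cap\mathcal{L}=0$ and that $MC_\lambda$ is well-behaved as a functor, not to upgrade an inclusion of $\mathcal{L}$'s to an equality. So your use of the hypothesis is slightly misplaced, though the final conclusion would still go through once the block computation is carried out carefully.
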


        \begin{proof}
		This is proved in \cite[Theorem 5.5]{KatzAlgo}.
	\end{proof}
	
	Our code for middle convolution uses the formulation given by Dettweiler--Reiter in \cite[Definition~2.1]{MC-L} rather than their formulation in \cite[Section 2]{KatzAlgo}, which only differs by changing the order of blocks in the matrices. The code requires a matrix group $W$ containing the matrices in the tuple $\Abf$, and the base ring of $W$ should contain the parameter $\lambda$ used for middle convolution. Note that the final function \texttt{Compute\_MC} will automatically check if our intermediate constructions act invariantly on the subspaces $\mathcal{K}$ and $\mathcal{L}$, as defined in the notation of \cite[Definition of 2.1]{MC-L}. The full computer code can be found online at \cite{Vay24}.	
	\subsection{Finite complex reflection groups} \label{FCRG}
	A real reflection can be identified as a matrix with eigenvalue $-1$ of multiplicity 1 and all other eigenvalues equal to 1. Pseudo-reflections refer to matrices with a root of unity as an eigenvalue of multiplicity 1 and all other eigenvalues equal to 1.  A \emph{finite complex reflection group} (\emph{FCRG}\,) is a finite group generated by pseudo-reflections. 
	As we are only working with complex groups, the distinction between real or complex eigenvalues is not relevant, and henceforth reflections can refer to real reflections or pseudo-reflections.
	
	All irreducible finite complex reflection groups  were classified by Shephard--Todd \cite{stfinite} as belonging to an infinite family of \textit{imprimitive} reflection groups or being one of 34 exceptional \textit{primitive} cases. These groups come with  a canonical finite-dimensional representation in terms of an action generated by reflections on a vector space $V$.  Every FCRG of rank $n$, defined below, has a minimal generating set consisting of either $n$ or $n+1$ reflections. 

	\begin{definition} \label{rank+wg}
		The \emph{rank} of a reflection group, presented as being generated by reflections on a vector space~$V$, is the dimension of $V$.
		 A rank $n$ group is said to be \emph{well-generated} if it can be generated by $n$ reflections. 
	\end{definition}

        As we want irreducible MCG-finite $(T+1)$-tuples, we know that our nice $T$-tuple (see Definition~\ref{nice tup}) must at least be of length $n$ to generate an irreducible FCRG, so in each group we only need to search for $T$-tuples with $T > n$. Among the rank 3 and 4 primitive irreducible groups, only $G_{31}$ is not well-generated (see \cite[Appendix D.3]{ld2009}. An imprimitive group $G(m, p, n)$ is well-generated if $p = m$ or $p = 1$. 
	
	Associated to each FCRG of rank $n$ is a set of $n$ invariants called the degrees of the reflection group; see \cite[Theorem 5.1]{stfinite} and \cite[Table D.3]{ld2009}. An important property of these invariants relevant for our purposes is their relation to the eigenvalues of the group elements. Note that we already know the eigenvalues for any element of the group must be roots of unity since the group is finite.
        
	\begin{theorem}\label{ReflEigvThm}
		Let $W$ be a FCRG of rank $n$, and let $\{d_i\}_{i = 1, \dots, n}$ denote the degrees of\, $W$. Suppose $\lambda$ is some primitive $\supth{d}$ root of unity that is an eigenvalue for some element of\, $W$. Then, $d \mid d_i$ for some degree $d_i$. Moreover, the maximum dimension of the eigenspace corresponding to $\lambda$ for any element of the group is the number of degrees $d_i$ that are divisible by $d$, and for some element in $W$ the eigenspace corresponding to $\lambda$ achieves this maximal dimension. 
	\end{theorem}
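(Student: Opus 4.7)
The plan is to reduce this statement to Springer's theorem on regular elements in finite complex reflection groups. The starting point is the Shephard--Chevalley--Todd theorem: the invariant ring $\C[V]^W$ is a polynomial algebra $\C[f_1, \dots, f_n]$ with $\deg f_i = d_i$, and the quotient map $\pi \colon V \to V/W \cong \C^n$ sending $v \mapsto (f_1(v), \dots, f_n(v))$ is finite and surjective of degree $|W|$.

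For the upper bound (which already implies part 1), let $w \in W$ have a $\lambda$-eigenspace $U \subseteq V$ with $\lambda$ a primitive $d$-th root of unity. Because $f_i$ is $W$-invariant and homogeneous of degree $d_i$, the restriction $f_i|_U$ satisfies $f_i|_U(u) = f_i(w^{-1}u) = \lambda^{-d_i} f_i|_U(u)$, so $f_i|_U = 0$ unless $d \mid d_i$. Let $a(d) = \#\{i : d \mid d_i\}$. Then $\pi(U)$ lies in a coordinate subspace of $\C^n$ of dimension $a(d)$, and since $\pi$ has finite fibers, $\dim U \leq a(d)$. In particular, if $\lambda$ is an eigenvalue of some element of $W$, then $a(d) \geq 1$, which gives part 1.

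The main obstacle, and the content I would not reprove from scratch, is the lower bound: the existence of some $w \in W$ whose $\lambda$-eigenspace has dimension exactly $a(d)$. This is Springer's existence theorem for regular elements (Springer, \emph{Regular elements of finite reflection groups}, 1974). The strategy there is to show that the regular locus $V^{\mathrm{reg}} = V \setminus \bigcup_H H$ (where $H$ ranges over reflection hyperplanes) meets some $\lambda$-eigenspace of a suitable $w$, and that any vector $v \in V^{\mathrm{reg}} \cap V(w,\lambda)$ forces $\dim V(w,\lambda) = a(d)$ via a Jacobian/degree count comparing the $f_i$ with $d \mid d_i$ to coordinates on the eigenspace. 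I would cite this result directly and note that an element realizing the bound is what Springer calls a $\lambda$-regular element.

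Finally, parts 2 and 3 follow by combining the upper bound from the invariant-theoretic argument with Springer's existence statement, and part 1 is the trivial corollary that $a(d) \geq 1$ whenever any element has $\lambda$ as an eigenvalue.
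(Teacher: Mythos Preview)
Your proposal is correct and aligns with the paper's approach: the paper simply cites the result (Springer, \emph{Regular elements of finite reflection groups}, Theorem~3.4), while you give the standard invariant-theoretic argument for the upper bound and then invoke Springer for the existence of a regular element attaining it. Since the paper provides no argument beyond the citation, your sketch is strictly more informative but rests on the same source.
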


        \begin{proof}
		This is proved in \cite[Theorem 3.4]{OrigEigv}.
	\end{proof}

         Theorem~\ref{ReflEigvThm} will be helpful in our computations for the primitive groups as we know how large our field needs to be to ensure the code finds all eigenvalues. 
	
	Further properties specific to imprimitive reflection groups are described in Section~\ref{imprim}.
	
	\subsection{Representations and character varieties}\label{charvar}
	
	Given a finitely generated group $\Gamma$, a representation of $\Gamma$ is a homomorphism $\rho\colon \Gamma \rightarrow G$, where $G < \GL_n(\C)$ is some algebraic group. Considering the vector space $V$ the matrices $\rho(\gamma)$ act on, $\rho$ is said to be irreducible if there is no nonzero subspace $U \subsetneq V$ that is left invariant by the
    matrix $\rho(\gamma)$ for all $\gamma \in \Gamma$. Two representations $\rho$ and $\rho'$ are isomorphic if there exists some $g \in G$ such that $\rho(\gamma) = g^{-1}\rho'(\gamma) g$ for all $\gamma \in \Gamma$. 	The character of a representation  $\rho$ is the map $\chi_{\rho}\colon \Gamma \rightarrow \C$ given by $\chi_{\rho}(\gamma) = \tr \rho(\gamma)$. Isomorphic representations have the same character since trace is fixed under conjugation. The character variety $\mathfrak{X}(\Gamma, G)$ parametrizes isomorphism classes of semi-simple representations $\rho\colon \Gamma \rightarrow G$ in terms of suitable traces.  
	
    In the case of the punctured sphere $\Sigma_{T+1}$, the fundamental group is
    $$\pi_1(\Sigma_{T+1}, x) = \inn{\gamma_1, \dots, \gamma_{T+1} \mid \gamma_1 \cdot\gamma_2\cdots \gamma_{T+1} = 1}.$$
    Any representation of $\Gamma = \pi_1(\Sigma_{T+1})$ is identified by $\rho(\gamma_i) = M_i \in G$ such that $M_1 \cdot M_2 \cdots M_{T+1} = I$ is the identity matrix. Two such representations  $\rho$ and $\rho'$ are equivalent if the tuple $[\rho(\gamma_1), \dots, \rho(\gamma_{T+1})]$ is related to $[\rho'(\gamma_1), \dots, \rho'(\gamma_{T+1})]$ by simultaneous conjugation.
	
	Taking $G = \SL_2(\C)$, we can consider the $\SL_2(\C)$-character variety of $\pi_1(\Sigma_{T+1})$. 
	
	Given a tuple of matrices $[M_1, M_2, \dots, M_{T+1}]$, let
        \begin{align*}
		t_i = \Trace(M_i), \quad t_{ij} = \Trace(M_iM_j), \quad t_{ijk} = \Trace(M_iM_jM_k) \quad\text{for }1\leq i < j< k \leq T+1.
	\end{align*}
	Since $\pi_1(\Sigma_{T+1}) = F_{T}$, we have a natural equivalence $\mathfrak{X}(\pi_1(\Sigma_{T+1}), \SL_2(\C)) \simeq \mathfrak{X}(F_{T}, \SL_2(\C))$.  For our purposes, this means $M_{T+1}$ will always be $(M_1M_2\cdots M_T)^{-1}$, so $t_{T+1} = \Trace(M_1M_2\cdots M_T)$ as all the matrices $M_i$ will be in $\SL_2(\C)$, where matrices have the same trace as their inverse.
	
	\begin{theorem}\label{trace coords thm}
		Let $[M_1, M_2, \dots, M_{T+1}] \in \SL_2(\C)^{T+1}$ be such that $\Pi M_i = I$. Then up to simultaneous conjugation, any such $(T+1)$-tuple can be identified by the following trace coordinates: 
		\begin{enumerate}
			\item for $T = 3$: $(t_1, t_2, t_3, t_{12}, t_{13}, t_{23}, t_4)$, 
			\item for $T = 4$: $(t_1, t_2, t_3, t_4, t_{12}, t_{13}, t_{14}, t_{23}, t_{24}, t_{34}, t_{123}, t_{124}, t_{134}, t_{234}, t_5)$.
		\end{enumerate}
	\end{theorem}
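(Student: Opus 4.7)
The plan is to reduce this to the classical Fricke--Klein--Magnus--Procesi description of the $SL_2(\C)$-character variety of a free group and then account for the product-equals-identity constraint. First I would observe that the relation $M_1 M_2 \cdots M_{T+1} = I$ forces $M_{T+1} = (M_1 \cdots M_T)^{-1}$, so the data of $[M_1, \ldots, M_{T+1}]$ up to simultaneous conjugation is the same as the data of $(M_1, \ldots, M_T) \in SL_2(\C)^T$ up to simultaneous conjugation, i.e.\ a point of the free-group character variety $\mathfrak{X}(F_T, SL_2(\C))$. Since every $A \in SL_2(\C)$ satisfies $\operatorname{tr}(A) = \operatorname{tr}(A^{-1})$, the proposed coordinate $t_{T+1}$ coincides with $\operatorname{tr}(M_1 M_2 \cdots M_T)$. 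Hence it suffices to show that the listed traces separate conjugacy classes in $\mathfrak{X}(F_T, SL_2(\C))$ for $T = 3$ and $T = 4$.

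Next I would invoke Procesi's theorem that $\C[SL_2(\C)^T]^{SL_2(\C)}$ is generated as a $\C$-algebra by traces of words in the $M_i$'s, and then reduce to a finite list using the $SL_2(\C)$ trace identities
$$A + A^{-1} = \operatorname{tr}(A)\, I, \qquad \operatorname{tr}(AB) + \operatorname{tr}(AB^{-1}) = \operatorname{tr}(A)\operatorname{tr}(B),$$
together with the cyclic invariance of trace. An induction on word length using these identities expresses any trace $\operatorname{tr}(M_{i_1} \cdots M_{i_k})$ as a polynomial in the traces $t_S = \operatorname{tr}\!\bigl(\prod_{i \in S} M_i\bigr)$, where $S$ ranges over subsets of $\{1, \ldots, T\}$ (with indices listed in increasing order). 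For $T = 3$ this gives the seven-element generating set $(t_1, t_2, t_3, t_{12}, t_{13}, t_{23}, t_{123})$ --- matching part (1) after substituting $t_4 = t_{123}$ --- and for $T = 4$ the fifteen-element set including the one length-four trace $t_{1234} = t_5$. Because these generators define a closed embedding of the affine GIT quotient $SL_2(\C)^T /\!/ SL_2(\C)$ into $\C^7$ or $\C^{15}$, they separate closed orbits, hence separate irreducible tuples up to simultaneous conjugation.

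The main obstacle is the inductive reduction, specifically verifying that for $T=4$ no trace of length greater than four is required. Concretely, one must show that for any word $W$ of length $\geq 5$ in $M_1, M_2, M_3, M_4$, repeated application of the two identities above together with cyclicity expresses $\operatorname{tr}(W)$ in terms of $t_S$ with $|S|\leq 4$; there is no analogous reduction from length $4$ down to length $3$, which is why $t_{1234}$ must be retained as an independent coordinate. I would handle this step by appealing to the classical Magnus reduction (see also the compilation in \cite{Tyk}) rather than reproducing the combinatorial case-check, and restrict attention throughout to irreducible tuples so that closed-orbit separation in the GIT quotient immediately yields conjugation equivalence of the tuples themselves.
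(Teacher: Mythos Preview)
Your proposal is correct and follows the standard Fricke--Vogt--Procesi approach. The paper itself does not give an argument: its entire proof is a citation to \cite[Section 4]{goldman2009tr} and \cite[Section 5]{VM2002}, and the content of those references is precisely the reduction you outline (Procesi's first fundamental theorem plus the $SL_2$ trace identities $\operatorname{tr}(AB)+\operatorname{tr}(AB^{-1})=\operatorname{tr}(A)\operatorname{tr}(B)$ to cut down to traces of increasing-index subsets). Your observation that one must restrict to irreducible tuples for the trace coordinates to determine the conjugacy class (rather than merely the closed orbit in the GIT quotient) is a point the paper leaves implicit but uses, since all tuples arising from middle convolution of nice tuples are irreducible.
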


	\begin{proof}
		For a proof and further discussion of character varieties of the punctured sphere, see \cite[Section~4]{goldman2009tr} and \cite[Section 5]{VM2002}. 
	\end{proof}

    \begin{definition} \label{defsignature}
    The trace coordinates above are referred to as the \emph{signature} of a tuple. 
    \end{definition}

	\section{Methods} \label{methods}
	For the rest of this written work, fix $\zeta_d = e^{\frac{2\pi i}{d}}$ as a primitive $\supth{d}$ root of unity. 
		
	Since the product of $[A_1, \cdots, A_{T+1}]$ must be the identity, we know any MCG-finite tuple is determined by the first $T$ matrices, up to simultaneous conjugation. As middle convolution does not preserve products (see Remark~\ref{mc no prod}), our idea is to take middle convolution of the first $T$ matrices to get $2\times 2$ matrices and then induce a representation of $\pi_1(\Sigma_{T+1})$ by taking the $\supth{(T+1)}$ matrix to be the inverse product. The condition for middle convolution being of rank 2 (stated in Theorem~\ref{gen_prop of MC}) depends on the dimension of the subspace fixed by it,  or equivalently the multiplicity of the eigenvalue 1 of each matrix in the tuple and the eigenvalues of their product. For much better control on the former, we consider reflections since they only have one nontrivial eigenvalue, of multiplicity 1. 
	
	Thus, for each irreducible finite complex reflection group $W$ of rank $n$, we want to find all irreducible $T$-tuples of reflections $\Abf = [A_1, A_2, \dots, A_T]$ in $W$ such that for a suitable choice of parameter $\lambda$, $MC_{\lambda}(\Abf)$ has rank 2. Given $T$, the formula given in Theorem~\ref{gen_prop of MC} tells us that the product $\Pi_{i = 1}^T A_i$ of reflections in~$\Abf$ should have a nontrivial eigenvalue $\lambda^{-1}$ of multiplicity $T-2$ so that $MC_{\lambda}(\Abf) = [\tilde{A_1}, \tilde{A_2}, \dots, \tilde{A_T}]$ is a $T$-tuple of rank 2. Since $W$ itself is irreducible, if we further require  that $W = \langle A_1, A_2, \dots, A_{T} \rangle$, we can avoid ``over-counting'' as some reflection groups contain other reflection groups as subgroups, and we only want to study irreducible MCG-finite tuples. 
    These conditions are precisely our definition of nice tuples given in \Cref{nice tup}!
	
	\subsection{Computing the braid group orbit}\label{BGSection}
	The braid group can be presented as $B_{T+1} = \inn{\sigma_1, \dots, \sigma_T \mid \sigma_i\sigma_{i+1}\sigma_i = \sigma_{i+1}\sigma_i\sigma_{i+1}, \sigma_i\sigma_j = \sigma_j\sigma_i}$, where the first relation holds for $1 \leq i\leq T-3$ and the second for $i-j \ge 2$. A generator $\sigma_i$ acts on $(T+1)$-tuples in the following way: 
	$$\sigma_i(M_1, M_2, \dots, M_T, M_{T+1}) = \left(M_1, \dots, M_{i-1}, M_iM_{i+1}M_{i}^{-1}, M_{i}, M_{i+2}, \dots, M_{T+1}\right).$$	
	
	Let $F_+$ denote the set of all words in positive powers of these generators. Further, let $\Orb(B_{T+1}, x)$ denote the orbit of some tuple $x$ under the action of $B_{T+1}$ and $\Orb(F_+, x)$ denote the set of images of $x$ under the action of words in $F_+$. 
	
	\begin{lemma}
		For a tuple $x$ of matrices, $\Orb(B_{T+1}, x)$ is finite if and only if\,  $\Orb(F_+, x)$ is finite. 
	\end{lemma}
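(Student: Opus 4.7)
The plan is to prove the two directions separately; the forward implication is immediate while the reverse requires a pigeonhole-type argument exploiting that each generator $\sigma_i$ acts invertibly on the set of all $(T+1)$-tuples.

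For the forward direction, since every element of $F_+$ is in particular an element of $B_{T+1}$, we have $\on{Orb}(F_+, x) \subseteq \on{Orb}(B_{T+1}, x)$, so finiteness of the latter implies finiteness of the former.

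For the reverse direction, suppose $\on{Orb}(F_+, x)$ is finite. The key observation is that each generator $\sigma_i$ acts on the set of all $(T+1)$-tuples of matrices as a bijection: the formula $\sigma_i(M_1, \dots, M_{T+1}) = (M_1, \dots, M_iM_{i+1}M_i^{-1}, M_i, \dots, M_{T+1})$ admits an explicit inverse, namely $\sigma_i^{-1}(M_1, \dots, M_{T+1}) = (M_1, \dots, M_{i+1}, M_{i+1}^{-1}M_iM_{i+1}, \dots, M_{T+1})$. Next, by definition of $F_+$, the set $\on{Orb}(F_+, x)$ is preserved by each $\sigma_i$: if $y = w \cdot x$ for some $w \in F_+$, then $\sigma_i \cdot y = (\sigma_i w) \cdot x \in \on{Orb}(F_+, x)$. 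Since $\sigma_i$ acts injectively on the set of all tuples and sends the finite set $\on{Orb}(F_+, x)$ into itself, the restriction $\sigma_i|_{\on{Orb}(F_+, x)}$ is a bijection of this finite set to itself. Therefore the inverse map $\sigma_i^{-1}$ also preserves $\on{Orb}(F_+, x)$.

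Since each $\sigma_i$ and each $\sigma_i^{-1}$ preserves $\on{Orb}(F_+, x)$, every word in $B_{T+1} = \langle \sigma_1^{\pm 1}, \dots, \sigma_T^{\pm 1} \rangle$ preserves it as well. In particular, starting from $x$ we obtain $\on{Orb}(B_{T+1}, x) \subseteq \on{Orb}(F_+, x)$, so $\on{Orb}(B_{T+1}, x)$ is finite (and in fact equals $\on{Orb}(F_+, x)$). The only subtle point, and therefore the main thing to verify carefully, is the injectivity of each $\sigma_i$ on the space of all tuples; this is where invertibility of the individual matrices $M_i$ enters, ensuring that the braid action extends to a genuine group action rather than merely a monoid action.
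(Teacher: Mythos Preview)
Your proof is correct and follows essentially the same approach as the paper's: both use that each generator $\sigma_i$ maps the finite set $\on{Orb}(F_+,x)$ into itself injectively, hence bijectively, so that $\sigma_i^{-1}$ also preserves it. Your version is slightly more explicit in that you argue directly that $\sigma_i^{-1}$ preserves the entire set $\on{Orb}(F_+,x)$, whereas the paper phrases this via the pigeonhole observation $\sigma_i^{-1}x = \sigma_i^n x$ for some $n\ge 0$; the underlying idea is the same.
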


	\begin{proof}
		Since $F_+ \subset B_{T+1}$, one direction is obvious. For the other direction, first note that for any fixed $g \in B_{T+1}$, if the set $\{g^{i} x : i \geq 0\}$ is finite, then we can write $g^{-1}x = g^n x$ for some $n \geq 0$, so we get that $\{g^i x: i \in \Z\}$ must be finite. Further, since the elements of $B_{T+1}$ act one at a time, it is enough to study how words in positive powers of the generators act on $x$. Thus, if $\Orb(F_{+}, x)$ is finite, then the braid group orbit must be finite as well. 
	\end{proof}
	
	To help with computing the braid group orbit, we need a way to capture the equivalence relation of tuples being related by simultaneous conjugation. Using character varieties, we can write down trace coordinates for our tuple, which we know how to do from Theorem~\ref{trace coords thm} when the matrices are in $\SL_2(\C)$.
	
	\begin{definition}
		Given a nice tuple $\Abf = [A_1, A_2, \dots, A_T]$, let  $MC_{\lambda}(\Abf) = [\tilde{A}_1, \tilde{A}_2, \dots, \tilde{A}_{T}]$ denote the middle convolution. If $(c_1, c_2, \dots, c_{T+1}) \in (\C^\times)^{T+1}$ is such that $c_i \tilde{A}_i \in \SL_2(\C)$ for $i = 1, \dots T$ and $c_{T+1} ((\tilde{A}_1\tilde{A}_2\cdots\tilde{A}_T)^{-1}) \in \SL_2(\C)$ with $\Pi^{T+1} c_i = 1$, the tuple 
		$$ \mathbf{\hat{A}} = [c_1 \tilde{A}_1, c_2 \tilde{A}_2, \dots, c_{T+1}(\tilde{A}_1\tilde{A}_2\cdots\tilde{A}_T)^{-1}] $$
		is said to be \emph{induced} by $MC_{\lambda}(\Abf)$ or by  $\Abf$ if the parameter $\lambda$ is clear. The induced tuple $\mathbf{\hat{A}}$ is a tuple in $(\SL_2(\C))^{T+1}$ which multiplies to the identity.
	\end{definition}

    \begin{remark}\label{induced tup equiv}
    Note that over $\C$, it is always possible to find a scalar $c_i$ so that $\det (c_i A_i) = 1$ for any $A_i \in \GL_n(\C)$, as any $\supth{n}$ root of $\det A_i$ can be taken for $c_i$. As the matrices $[\tilde{A}_1, \dots, \tilde{A}_{T+1}]$ multiply to the identity, there is always some choice of roots that multiply to the identity.  In this work we choose some scalars $(c_1, \dots, c_{T+1})$ to construct our induced tuples. There may be multiple choices of such tuples of scalars, so our results should be considered up to choosing a different one. 
    \end{remark}

	When we say trace coordinates of a tuple $MC_{\lambda}(\Abf)$, we will always mean the trace coordinates of the induced tuple $\mathbf{\hat{A}}$ unless otherwise specified.
	We want to compute the orbit of induced tuples $\mathbf{\hat{A}}$ up to simultaneous conjugation. This is equivalent to computing the orbit of the point associated to $\mathbf{\hat{A}}$ in the character variety $\mathfrak{X}(\pi_1(\Sigma_{T+1}), \SL_2(\C))$. Note that since all of our tuples come from FCRG and middle convolution is equivariant with the action of the braid group, we already know all the orbits are finite. 
	
	The code for the algorithm described below can be found online at \cite{Vay24}.
	
	Given a tuple $\mathbf{M} = [M_1, \dots, M_{T+1}]$ of $\SL_2(\C)^{T+1}$ such that their product is the identity, the braid group $B_{T+1}$ orbit is computed as follows. 
	
	We first compute the signature of $\mathbf{M}$, and then apply each of the $T$ generators to the tuple $\mathbf{M}$. If $\sigma_i(\mathbf{M})$ has a new signature, then we store the tuple in \texttt{To\_Test} and the signature in \texttt{Signatures} and add $\mathbf{M}$ to \texttt{Tested}.  We then apply $\sigma_i$ again to the (at most $T$) new tuples in \texttt{To\_Test} we got from the action on $\mathbf{M}$, once again storing only tuples which have new signatures. We repeat this process until the sets \texttt{To\_Test} and \texttt{Tested} are the same. The function returns the set \texttt{Tested}, which should contain only one representative per distinct signature, and can optionally return the set \texttt{Signatures} as well. 
	Note that here our definition of signature is the trace coordinates (see Definition~\ref{defsignature}), which  differs from the definition used in Tykhyy's classification in terms of residues \cite{Tyk}.
	
	\subsection{Finding nice tuples} \label{finding nice tups}

	Nice $T$-tuples correspond to  MCG-finite $(T+1)$-tuples in the following way: given a nice tuple $[A_1, \dots, A_T]$, we get a corresponding representation  $[A_1, \dots, A_{T}, A_{T+1}]$ of the $(T+1)$-punctured sphere if we take $A_{T+1} = (A_1A_2 \cdots A_T)^{-1}$. 
	Thus, it is enough to search each complex reflection group for nice tuples.
	
	By Theorem~\ref{ReflEigvThm}, an eigenvalue of multiplicity $T-2$ for an element in $W$ must be a $\supth{d}$ root of unity for~$d$ dividing at least $T-2$ degrees of $W$. 
	Any reflection group can be constructed in Magma by providing the generating reflections; the irreducible finite complex reflection groups can be easily constructed in Magma by providing the Shephard--Todd number or the corresponding parameters for $G(m, p, r)$. In most cases Magma generates the group $W$ via the reflection representation over the trace field of $W$; see \cite[Tables 1 and~2]{feit2003}.

	One issue is that not all characteristic polynomials will split over the trace field, as Magma computes eigenvalues for matrices over the field of entries. Since our search depends on the eigenvalues of the product of matrices in the tuple(s), we must change the base field of $W$ to the smallest cyclotomic field containing all the possible eigenvalues. The largest this extension will be is $\Q(\zeta_d)$, where $d$ is the least common multiple  of all the degrees of $W$, but in certain cases smaller fields might suffice. For large $d$ computation time is typically slower, but we have expanded the field to the LCM of the degrees unless otherwise stated. For example, in the case of $W = G_{32}$, the degrees are $ \deg(W) = \{12,18,24,30 \}$, and so $\lcm (\deg(W)) = 360$, which slows down computation time considerably. For this group, we prove in Section~\ref{G32 4-tups} that it is enough to search for 4-tuples over $\Q(\zeta_{12})$ by showing any eigenvalue of multiplicity 2 must be a $\supth{12}$ root of unity. 
	
	Before outlining the steps of our search, we define some terms. 
	
	\begin{definition} \label{type def}
		Given a tuple of matrices $ \mathbf{A} = [A_1, A_2, \dots, A_T]$, the \emph{type} of $\mathbf{A}$ is determined by the eigenvalues of $(A_1A_2 \cdots A_T)^{-1}$ counted with multiplicity. Tuples whose inverse product has the same eigenvalues with multiplicity are said to be of the same type.  An \emph{exemplar} of a type is a choice of nice tuple of that type. Given a tuple of some type $A$, the type of the inverse tuple is called the \emph{inverse type} of $A$, and the two types together make an \emph{inverse pairs}. 
	\end{definition}

	The use of types allows us to partition a set of nice tuples based on the eigenvalues of the inverse product. Since there are many nice tuples of the same type, we choose exemplars for each type to perform the in-depth computations on. Note that the choice of exemplar for any type is arbitrary and that it may be possible that different exemplars result in different orbits. To ensure this does not happen, we go back through all the tuples for each type and make sure that it belongs to the braid group orbit we compute for the middle convolution of the exemplar. More details on the braid group computations can be found in Section~\ref{BGSection}.  
	
	In some groups we get types that are inverse pairs. In these cases we only provide the computations for one exemplar from each pair. To ensure the inverse type would not produce a new orbit, we check that the inverse tuple of the middle convolution lies in the orbit of the middle convolution of the exemplar; this is enough by the relationship between inverse tuples and inverse parameters given in Theorem~\ref{Inverse MC Thm}. 
	The code for how we checked each type is available at \cite{Vay24}. 

	\begin{remark} \label{type rmk}
		Aside from a few  exceptional cases that were computationally too large, we verified that for each choice of the parameter $\lambda$, the rank 2 MCG-finite tuples coming from nice tuples of the same type are in the same braid group orbit, as are exemplars from the inverse type. In this sense types can be viewed as ``almost'' equivalence classes for nice tuples up to inverse pairs.
	\end{remark}
	
	After coercing the matrix group into a larger field if necessary, the general process for searching for $T$-tuples is as follows: 
	\begin{enumerate}
		\item Fix a representative $Q_i$ for each conjugacy class of reflections in $W$, and fix an ordering $[Q_1, \ldots, Q_n]$.
		\item Given $T$, construct the list $M$ of distinct sets of $T$ reflections in $W$, using the ordering of $Q$ to eliminate some tuples equivalent up to simultaneous conjugacy. 
		\item Search $M$ for all sets such that the matrices generate $W$ (this means the tuple is irreducible) and the product of the matrices, for some order of multiplying, has an eigenvalue of multiplicity $T-2$. Each such set is then organized into a tuple based on the order of multiplication and stored in a list X of irreducible tuples. 
		\item For each nice tuple $x \in X$, compute the eigenvalues of the inverse product of $x$, and partition $X$ into different types corresponding to the different sets of eigenvalues of the inverse product. 
		\item Choose exemplars $\Abf$, $\mathbf{B},\ldots$ for each type, and compute the middle convolutions  $MC_{\lambda_1}(\Abf)$, $MC_{\lambda_2}(\Abf)$, $MC_{\lambda}(\mathbf{B}),\ldots$ for suitable choice(s) of parameter $\lambda$.
		\item Compute the subgroup of $\GL_2(\C)$ generated by $MC_{\lambda}(\Abf), MC_{\lambda}(\mathbf{B}), \dots$
		\item For each middle convolution of the exemplars, find a suitable character $[a_1, \dots, a_{T+1}]$, and construct the induced tuple $\mathbf{\hat{A}}$.
		\item Compute the braid group orbit of $\mathbf{\hat{A}}$, and identify the $\SL_2(\C)$ subgroup generated by the matrices of~$\mathbf{\hat{A}}$.
		\item Check that the other exemplars and inverse tuples do not produce new orbits.
		\item Use the tuples in the braid group orbit to compute the parameter and residue signatures of the full equivalence class as defined in the previous classifications to try and match our examples to theirs. 
	\end{enumerate}
	We recorded all the details for our exemplars and the middle convolution computations for each of the primitive reflection groups of rank 3 and 4. The middle convolution tables are listed in the appendix. The types and exemplars are listed in the section for each group, as are the comparison tables with the other classifications.  
	More details on what we use to compare with the existing classifications of \cite{T-L} and \cite{Tyk} are given in Sections~\ref{prim3} and~\ref{prim4}.
	
		\section{Imprimitive reflection groups}\label{imprim}
	Imprimitive reflection groups $G(m, p, n)$ are parameterized by $m, p , n \geq 1$ with $p \mid m$. These groups can be viewed as $n \times n$ generalized permutation matrices, which each row and column containing only one nonzero entry which is an $\supth{m}$ root of unity such that the product of all the nonzero entries is an $\supth{\left(\frac{m}{p}\right)}$ root of unity.  For $m > 1$ and $(m, p, n) \neq (2, 2, 2)$, these groups are always irreducible, meaning they act irreducibly on $\C^n$ (see \cite[Section 2]{cohenfin}). 
	
	An element $A\in G(m, p, n)$ can be written as $A = [a_1, a_2, \dots, a_n \mid \sigma]$ for $\sigma \in S_n$, $a_i \in \Z$, and $p$ dividing $a_1 + a_2+ \cdots + a_n$. This element can be identified with the matrix where the $\supth{(i, \sigma(i))}$ entry of $A$ is $\zeta_m^{a_i}$ and all the other entries are 0; see \cite[Section 2.3]{shiautos}. Note that all these $a_i$ should be read modulo $m$ since we are working with powers of an $\supth{m}$ root of unity. 
	Additionally, there is a natural surjective homomorphism $\vphi\colon G(m, p, n) \rightarrow S_n$ given by sending $A$ to $\sigma$. 
	
	One observation that follows from the definition is that for any $p$,  $G(m, m, n) \leq G(m, p, n)$ and $G(m, p, n) \leq G(m, 1, n)$ as subgroups.  
	
	Just as in the primitive case, the degrees of imprimitive groups tell us about the possible eigenvalues.  Writing $q = \frac{m}{p}$, we have 
	\begin{align}
		\deg (G(m, p, n)) &= \{m, 2m, \dots, (n-1)m, qn\}, \\
		|ZG(m, p, n)| &= q \gcd(p, n),
	\end{align}
	where $ZG(m, p, n)$ is the center of $G(m, p, n)$ and a cyclic subgroup.

	Reflections in the group $G(m, p, n)$ will be of one of the following two types:
	\begin{enumerate}[label={\textit{Type}~\arabic*:}, ref=\arabic*,leftmargin=!,widest={Type 1:}]
		\item \label{type1} These are elements such that $\sigma = (ij)$ is a transposition with $a_k \equiv 0 \mod m$ for $k\neq i, j$ and $ a_i + a_j \equiv 0 \mod m$. All reflections of type~\ref{type1} are of order 2 and lie in the subgroup $G(m, m, n)$ of $G(m, p, n)$. Using the notation from \cite[Section 2.3]{shiautos}, we write a type~\ref{type1} reflection as $s(i, j; a_i)$ with the understanding that this is the same as $s(j, i; -a_i)$.
		\item \label{type2} These are elements such that $\sigma = e$ and for some $i$, $a_i \not \equiv 0  \mod m$ and $a_k \equiv 0 \mod m$ for all $k \neq i$. These matrices are diagonal matrices of order $m/\gcd(m, a_i)$ and will be denoted $s(i; a_i)$. Type~\ref{type2} matrices can only exist when $p < m$. 
	\end{enumerate}

	\begin{theorem}\label{GenSet}
		Let $S$ be a set of reflections with minimal possible cardinality such that $S$ generates $G(m,p,n)$. Then $S$ satisfies the following conditions: 
		\begin{enumerate}
			\item If\, $p = 1$, then $S$ contains $n-1$ reflections of type~{\rm\ref{type1}} and one reflection of type~{\rm\ref{type2}} of order $m$.
			\item If\, $p = m$, then $S$ contains $n$ reflections of type~{\rm\ref{type1}}.
			\item If\, $p \neq 1, m$, then $S$ contains $n$ reflections of type~{\rm\ref{type1}} and one reflection of type~{\rm\ref{type2}} of order $\frac{m}{p}$.

		\end{enumerate}
	\end{theorem}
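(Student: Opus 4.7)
The plan is to extract necessary conditions on $S$ from two natural homomorphisms together with an irreducibility-based lower bound, and then exhibit generating sets realizing these bounds. The first homomorphism is the projection $\varphi : G(m,p,n) \to S_n$ recording the underlying permutation. Since $\varphi$ kills every Type 2 reflection and sends each Type 1 reflection $s(i,j;a)$ to the transposition $(ij)$, the Type 1 reflections in $S$ must project to transpositions generating $S_n$, forcing at least $n-1$ of them, corresponding to a spanning tree on $\{1,\ldots,n\}$. The second homomorphism, applicable when $p<m$, is $\psi : G(m,p,n) \to \Z/(m/p)$ given by $\psi([a_1,\ldots,a_n;\sigma]) = (a_1+\cdots+a_n)/p \bmod (m/p)$, which a direct check confirms is a homomorphism with kernel exactly $G(m,m,n)$. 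Type 1 reflections lie in $\ker\psi$ while $\psi(s(i;c)) = c/p$, so $S$ must contain a Type 2 reflection $s(i;c)$ with $c/p$ a generator of $\Z/(m/p)$, equivalently $\gcd(c,m)=p$; such an element has order exactly $m/p$.

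Because $G(m,p,n)$ acts irreducibly on $\C^n$, and each reflection fixes a hyperplane, the $G$-invariant subspace $\bigcap_{s\in S}\mathrm{Fix}(s)$ must be zero, forcing $|S|\geq n$. Two cases are then immediate. For $p=m$, only Type 1 reflections exist and the Shephard-Todd Coxeter-like presentation of $G(m,m,n)$ realizes $|S|=n$ using $n$ Type 1 reflections. For $p=1$, the rank bound gives $|S|\geq n$ and $\psi$ requires a Type 2 of order $m$; then $n-1$ Type 1 reflections along a spanning tree together with a single Type 2 of order $m$ generate $G(m,1,n)$, since conjugating the Type 2 around the tree produces $s(j;1)$ at each vertex and hence the full diagonal subgroup $(\Z/m)^n$.

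The remaining case $1<p<m$ is the main obstacle: I must show $|S|=n$ is impossible and exhibit an $(n+1)$-element set of the stated form. For impossibility, if $S$ has $n$ Type 1 reflections then $\langle S\rangle \subseteq G(m,m,n) \subsetneq G(m,p,n)$; if $S$ has $n-1$ Type 1 reflections and one Type 2 $s(i;c)$ of order $m/p$, then conjugating $s(i;c)$ by the Type 1 reflections around the tree yields $s(j;c)$ at every vertex with no phase drift (a direct basis-vector computation), while the $n-1$ Type 1 reflections can be simultaneously conjugated by a diagonal matrix to the standard permutation generators, so $\langle S\rangle$ has order at most $n!\cdot(m/p)^n = |G(m,p,n)|/p^{n-1}$, which is proper in $G(m,p,n)$. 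For sufficiency, the explicit set $\{s(1,2;0), s(2,3;0),\ldots,s(n-1,n;0), s(1,2;1), s(1;p)\}$ has size $n+1$ and generates $G(m,p,n)$: the extra Type 1 reflection $s(1,2;1)$ yields the diagonal element $s(1,2;0)\cdot s(1,2;1)=[-1,1,0,\ldots,0;e]$, whose entries combined with those of $s(1;p)$ have coordinate-wise subgroup $\gcd(1,p)\Z/m\Z = \Z/m\Z$, so after conjugation by the tree reflections we capture the full diagonal subgroup of $G(m,p,n)$, which combined with $S_n$ produces the whole group. The technical heart of the argument is verifying that the diagonal-matrix conjugation and cycle-closure steps work cleanly for all $(m,p,n)$; an alternative packaging, which may be cleaner at the cost of more bookkeeping, is to exhibit a nontrivial linear character of $G(m,p,n)$ vanishing on every $|S|=n$ candidate subgroup, modeled on the index-$2$ character $[a_1,a_2;\sigma]\mapsto(-1)^{a_1}\mathrm{sgn}(\sigma)$ that cuts out the obstruction in the toy case $(m,p,n)=(4,2,2)$.
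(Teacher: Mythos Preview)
The paper does not supply its own proof of this statement; it simply cites Lemma~3.2 of Shi's paper on automorphisms of reflection groups, together with supporting results from Shi's earlier work on $G(m,m,n)$ and $G(m,p,n)$. Your proposal is therefore a self-contained argument where the paper defers entirely to the literature, so there is no in-paper proof to compare against.

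Your argument is essentially sound. The two homomorphisms $\varphi$ and $\psi$, together with the fixed-space bound $|S|\geq n$ from irreducibility, correctly pin down the minimal cardinality in all three cases, and your explicit generating sets work. In particular, the key computation that $n-1$ Type~1 reflections forming a spanning tree together with a single Type~2 reflection $s(i;c)$ generate a group of order at most $n!\,(m/p)^n$ is valid: diagonal conjugation (by an element of $G(m,1,n)$, which normalizes $G(m,p,n)$) reduces the tree to standard transpositions while fixing $s(i;c)$, and the diagonal subgroup generated is then exactly $(\langle c\rangle)^n$.

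One gap should be closed. The theorem as stated concerns \emph{every} minimal generating set, not merely the existence of one with the stated composition. For $p=1$ and $p=m$ your constraints already force the composition of any minimal $S$, but for $1<p<m$ you only exhibit a generating set of size $n+1$ of the required shape; you do not rule out a minimal $S$ of size $n+1$ consisting of $n-1$ Type~1 reflections and two Type~2 reflections. The same diagonal-conjugation argument handles this: the diagonal subgroup obtained is $(d\,\Z/m\Z)^n$ with $d=\gcd(c_1,c_2,m)$ a multiple of $p$, so the generated group again has order at most $n!\,(m/p)^n < |G(m,p,n)|$. Combined with your earlier observation that fewer than $n-1$ Type~1 reflections cannot surject onto $S_n$, this forces the composition of every minimal $S$ and completes the proof.
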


	\begin{proof}
		The theorem is stated in \cite[Lemma 3.2]{shiautos}, with proofs in \cite[Section 2.10 and Lemma 2.1]{shi2005mm} and \cite[Lemma 2.2]{shi2005mp}.
	\end{proof}
        
	{\samepage
	\begin{theorem}\label{imprim homo}
		For any reflection $r \in G(m, p, n)$, the homomorphism $\vphi\colon G(m, p, n) \rightarrow S_n$ must satisfy one of the following: 
		\begin{enumerate}
			\item $\vphi(r)$ is a transposition and $r$ has order 2.
			\item $\vphi(r)$ is the identity.
		\end{enumerate} 
	\end{theorem}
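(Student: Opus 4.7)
The plan is to read the theorem directly off the Type 1 / Type 2 dichotomy for reflections in $G(m,p,n)$ stated in the paragraph immediately preceding it. Writing an arbitrary element as $r = [a_1, \dots, a_n \mid \sigma]$, the map $\varphi$ is defined by $\varphi(r) = \sigma$, so it suffices to show that whenever $r$ is a reflection, $\sigma$ is either the identity or a transposition, and in the latter case $r$ has order $2$. Invoking the classification, if $r$ is Type 1 then by definition $r = s(i,j;a_i)$ with $\sigma = (i\,j)$ and $r$ of order $2$, which is case (1); if $r$ is Type 2 then $r = s(i;a_i)$ with $\sigma = e$, which is case (2). So once the Type 1 / Type 2 classification is in hand, the theorem is simply a bookkeeping exercise.

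If I also wanted a self-contained justification (rather than citing the classification), I would argue along the following lines. Decompose $\sigma \in S_n$ into disjoint cycles; the monomial matrix $r$ is block-diagonal with respect to the induced partition of $\{1,\dots,n\}$, and the rank of $r - I$ is the sum of the ranks contributed by each block. For a cycle of length $k \geq 1$, the corresponding block of $r$ is a generalized $k \times k$ cyclic permutation matrix whose $k$-th power is the scalar $\zeta_m^{a_{i_1}+\cdots+a_{i_k}} I$. A direct rank computation shows that $r - I$ restricted to this block has rank $k$ unless the block is a single diagonal entry equal to $1$ (rank $0$), or a $2$-cycle with $a_i + a_j \equiv 0 \pmod m$ (rank $1$).

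Since $r$ is a reflection, $\on{rank}(r - I) = 1$, forcing exactly one block to be nontrivial and that block to be either a single non-identity diagonal entry or a $2$-cycle with the compatibility relation $a_i + a_j \equiv 0 \pmod m$. The first possibility recovers Type 2 with $\varphi(r) = e$; the second recovers Type 1 with $\varphi(r) = (i\,j)$, and squaring the block shows $r^2 = I$, so $r$ has order $2$.

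The main obstacle, if any, is only in the rank calculation for a general cycle of length $k \geq 3$ — one needs to check that such a block always contributes rank $k$ to $r - I$, ruling out more exotic reflections. This is a short linear-algebra check: the characteristic polynomial of the block is $x^k - \zeta_m^{a_{i_1}+\cdots+a_{i_k}}$, whose roots are all distinct and at most one of which can equal $1$, so $1$ is an eigenvalue of multiplicity at most $1$ and the rank deficit is at most $1$, hence the rank is at least $k - 1 \geq 2$ when $k \geq 3$. Everything else in the argument is routine.
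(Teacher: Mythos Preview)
Your proposal is correct and matches the paper's approach: the paper's proof is a single sentence stating that the result follows from the definition of reflections (i.e., the Type 1/Type 2 dichotomy) and the definition of $\varphi$, with a pointer to \cite[Proposition 2.2]{cohenfin} for details. Your first paragraph is exactly this argument; your additional self-contained rank computation is more than the paper provides, but it is correct and a nice supplement.
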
}

	\begin{proof}
		This is true by the definition of reflections and the definition of the homomorphism. For a more thorough examination of related properties, we refer to \cite[Proposition 2.2]{cohenfin}.
	\end{proof}
	
	Given any set of reflections generating an irreducible imprimitive group, the permutations associated to the  reflections in the set must generate a transitive subgroup of $S_n$, or else the group cannot be irreducible. Note that in the case where the associated permutations are transpositions (\textit{i.e.}~the reflections are of type~\ref{type1}), the only transitive subgroup of $S_n$ generated by transpositions is $S_n$ itself; see \cite[Lemma 2.13]{ld2009}. 
       
	A subgroup of a reflection group generated by reflections is called a reflection subgroup. We modify some notation from \cite{wang2010} to state relevant theorems for identifying reflection subgroups in the imprimitive groups. Let $X = \{s(a_h, a_{h+1}; b_h)\}$, $a_h \neq a_{h+1}$,  be a subset of reflections in $G(m, m, n)$. We can associate to $X$ the graph $\Gamma_X$ with nodes $\{a_i\}$ and edges $(a_h, a_{h+1})$ for $s(a_{h}, a_{h+1}; b_h)$ in $X$, where we allow multi-edges but no self-loops (\textit{i.e.}~no edges from a node to itself). Letting $\inn{X}$ denote the reflection group generated by the reflections in $X$, we see that this group is irreducible if and only if the graph $\Gamma_X$ is connected. 
    	Whenever $\Gamma_X$ is such that the graph contains exactly one cycle, we can define a parameter $\delta(X)$ as follows:  given a cycle consisting of $2 \leq r \leq n$ edges corresponding to reflections $\{s(a_{h}, a_{h+1}; b_h)\}$ for $1 \leq h \leq r$, take $\delta(X)$ to be the absolute value of $ \sum_{1}^r b_h$.
    
	If $p \neq m$, we say $X$ is a nice reflection set of $G(m, p, n)$ if $X$ contains exactly one reflection of type~\ref{type2}. Note that in \cite{wang2010}, reflections of type~\ref{type2} are represented in the graph $\Gamma_X$ as ``rooted'' nodes, but as these do not affect the parameter $\delta(X)$, we do not need this terminology for our purposes.  
	
	\begin{remark}
		Note that $s(a_{h}, a_{h+1}; b_h) = s(a_{h+1}, a_{h}; -b_h)$, so if $X$ is such that the graph contains only one cycle of length $r$, then the cycle corresponds to some permutation $(a_1a_2 \cdots a_r)$ (in cycle notation) and the signs of $b_h$ must be chosen accordingly when computing $\delta(X)$. In particular, if the cycle is of length~2, \textit{i.e.}~given as a double edge between two nodes corresponding to reflections $s(a_1, a_2; b_1)$ and $s(a_1, a_2; b_2)$, then $\delta(X) = |b_1 -b_2|$.
	\end{remark}

	\begin{theorem}\label{reflsubgp}
		With the notation as defined above, we have the following relations for the reflection subgroup generated by $X$:  
		\begin{enumerate}
			\item Let $X$ be reflection set of\, $G(m, m, n)$ such that the graph $\Gamma_X$ is connected and contains exactly one cycle. Then $\inn{X} = G(m, m, n)$ if and only if the integer $\delta(X)$ is coprime to $m$.
			\item Let $X$ be a nice reflection set of\, $G(m, p, n)$ with $s(a_1; b)$ the one reflection of type~{\rm\ref{type2}}, such that $\Gamma_X$ is connected with $n_1$ nodes and exactly one cycle. Then $\inn{X} = G(\frac{m}{m_1}, \frac{\gcd(b, m)}{m_1}, n_1)$, where $m_1 = \gcd(b, m, \delta(X))$.
			\item Let $X$ be a nice reflection set of\, $G(m, p, n)$ with $s(a_1; b)$ the unique reflection of type~{\rm\ref{type2}} such that the graph $\Gamma_X$ is a connected tree with $n_1$ nodes. Then $\inn{X} = G(\frac{m}{\gcd(b, m)}, 1, n_1)$. 
		\end{enumerate}
	\end{theorem}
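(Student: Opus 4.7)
The plan is to exploit the short exact sequence
\[
1 \longrightarrow D \longrightarrow G(m,p,n) \xrightarrow{\vphi} S_n \longrightarrow 1,
\]
where $D$ denotes the subgroup of diagonal matrices in $G(m,p,n)$, and to analyze the reflection subgroup $\inn{X}$ by separately identifying its image under $\vphi$ and its intersection with $D$. Since $\Gamma_X$ is connected on $n_1$ nodes, the Type 1 reflections in $X$ project to transpositions that generate a transitive subgroup of the symmetric group on those nodes. A transitive subgroup generated by transpositions is the full symmetric group, so $\vphi(\inn{X})$ is the symmetric group on the nodes of $\Gamma_X$; Type 2 reflections contribute nothing here since $\vphi$ sends them to the identity.

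The task then reduces to identifying $\inn{X} \cap D$. The strategy is to produce explicit diagonal matrices inside $\inn{X}$ and show they exhaust everything allowed. Products of a Type 1 reflection and a conjugate of it by another Type 1 reflection yield diagonal matrices; in particular, traversing the unique cycle of $\Gamma_X$ (if one exists) in the manner prescribed by its cyclic ordering produces a diagonal matrix with exponents summing, up to sign, to $\delta(X)$ by definition. A Type 2 reflection contributes the generator $\zeta_m^b$ at position $a_1$. Conjugating these building blocks by the transpositions already known to lie in $\vphi(\inn{X})$ permutes the entries freely among the nodes of $\Gamma_X$.

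For Part 1 the diagonal subgroup of $G(m,m,n)$ is parameterized by tuples of $m$-th roots of unity summing to $0 \bmod m$. The cycle generator forces the entries to lie in $\langle \zeta_m^{\delta(X)}\rangle$, and one recovers the full diagonal of $G(m,m,n)$ exactly when $\zeta_m^{\delta(X)}$ has order $m$, i.e.\ $\gcd(\delta(X),m)=1$; conversely, when $\gcd(\delta(X),m) > 1$, the entries remain trapped in a proper subgroup. For Parts 2 and 3 one additionally has the generator $\zeta_m^b$, so the allowed entries come from $\langle \zeta_m^{\gcd(b,\delta(X),m)}\rangle = \langle \zeta_m^{m_1}\rangle$ in Part 2 and from $\langle \zeta_m^{\gcd(b,m)}\rangle$ in Part 3 (no cycle, so no $\delta(X)$ contribution). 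Matching with the standard description of $G(m',p',n_1)$ --- entries that are $m'$-th roots of unity whose product is an $(m'/p')$-th root of unity --- identifies $\inn{X}$ with $G(m/m_1,\gcd(b,m)/m_1,n_1)$ and $G(m/\gcd(b,m),1,n_1)$ respectively.

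The main technical obstacle I expect is verifying the exact matching of the diagonal subgroup with the claimed $G(m',p',n_1)$ in Part 2: one must show both that the cyclic group of allowed diagonal entries is precisely $\langle \zeta_{m/m_1}\rangle$ and that the product-of-entries constraint pins down $p' = \gcd(b,m)/m_1$ rather than something coarser. This requires carefully tracking the interplay between the cycle contribution $\delta(X)$, the Type 2 exponent $b$, and the ``sum to $0 \bmod m$'' constraint inherited from $G(m,p,n) \supset \inn{X}$. Since these bookkeeping computations were carried out in \cite{wang2010}, I would primarily import and adapt their lemmas rather than redo the calculation from scratch, with the bulk of the write-up devoted to verifying the image-of-$\vphi$ step and the generation of diagonal matrices by products around the cycle.
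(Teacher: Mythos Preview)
The paper does not actually prove this theorem: its entire proof consists of citations to \cite[Thm.~2.19]{shi2005mm} for Part 1 and \cite[Theorems 3.9, 3.10]{wang2010} for Parts 2 and 3. Your proposal therefore goes well beyond what the paper does, and the outline you give --- analyzing $\inn{X}$ via the short exact sequence $1 \to D \to G(m,p,n) \to S_n \to 1$, identifying the symmetric-group image from connectedness of $\Gamma_X$, and generating the diagonal part from the cycle (contributing $\zeta_m^{\delta(X)}$) together with the Type 2 reflection (contributing $\zeta_m^b$) --- is the standard and correct strategy, and is essentially what the cited papers carry out. Since you already plan to import the bookkeeping from \cite{wang2010}, your proposal is compatible with the paper's approach; if anything, it is more informative than the paper's bare citation.
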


	\begin{proof}
		The first part is proven in \cite[Theorem 2.19]{shi2005mm}, and the second is proven in \cite[Theorem 3.10]{wang2010}. Note that in both these papers, the authors refer to cycles as circles. The third statement is proven in \cite[Theorem~3.9]{wang2010}, where the authors define rooted trees to capture the presence of type~\ref{type2} reflections. We refer to these paper for more details on the graph-theoretic interpretation of generating sets and their connection to reflection subgroups of the imprimitive reflection groups.
	\end{proof}

		\begin{remark}\label{imprim just}
		Among the imprimitive groups, we have to check for $n$-tuples and $(n+1)$-tuples in $G(m, p, n)$. By Theorem~\ref{GenSet}, the only possible groups that can contain nice $n$-tuples are the well-generated groups $G(m, 1, n)$ and $G(m, m, n)$. We prove there are no nice $n$-tuples in either of these groups when $n \geq 5$ in Section~\ref{larger n}. By results in \cite[Lemma 5.3.2]{LLL}, we know that $(n+1)$-tuples can only exist in $G(m, p, n)$ for $n < 5$. This work will fully analyze the existence of nice $(n+1)$-tuples for $2< n <5$.
	\end{remark}
	
		\subsection{Nice 3-tuples in $\boldsymbol{G(m, p, 3)}$}
	In this case we are seeking nice tuples $[r_1, r_2, r_3]$ from $G(m, p, 3)$ such that their product has a nontrivial eigenvalue of multiplicity 1. Note that since $\inn{r_1, r_2, r_3} = G(m, p, 3)$, we know that $p = 1$ or $p = m$ are the only possibilities as these are the only well-generated imprimitive groups (see Definition~\ref{rank+wg})

	\begin{lemma}\label{lem4.6}
		In $G = G(m, 1, 3)$,  for a nice tuple $[r_1, r_2, r_3]$ with $r_1, r_2$ of type~{\rm\ref{type1}} and $r_3$ of type~{\rm\ref{type2}} with nontrivial eigenvalue $\zeta_m'$ of order $m$, letting $A = r_1r_2r_3$, the eigenvalues of $A$ must be the distinct cube roots of\, $\zeta_m'$. 
	\end{lemma}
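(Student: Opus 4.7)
The plan is to exploit the monomial-matrix structure of $G(m,1,3)$: any element decomposes as a permutation times a diagonal entry pattern, and for a three-by-three generalized permutation matrix whose underlying permutation is a $3$-cycle, the matrix cubes to a scalar matrix equal to the product of its nonzero entries.

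First I would use the homomorphism $\vphi:G(m,1,3)\to S_3$ of Theorem \ref{imprim homo}. Since $r_1,r_2$ are Type 1 reflections, $\vphi(r_1)$ and $\vphi(r_2)$ are transpositions in $S_3$, and since $r_3$ is Type 2, $\vphi(r_3)=e$. The irreducibility (generation of $G(m,1,3)$) forces $\{\vphi(r_1),\vphi(r_2)\}$ to generate a transitive subgroup of $S_3$, so they are distinct transpositions. Their product is a $3$-cycle, and hence $\vphi(A)=\vphi(r_1)\vphi(r_2)\vphi(r_3)$ is a $3$-cycle in $S_3$.

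Next I would write $A$ concretely: after a simultaneous permutation of basis, $A$ is supported on the positions $(1,2),(2,3),(3,1)$ with some nonzero entries $\alpha,\beta,\gamma$. A direct computation gives $A^3=\alpha\beta\gamma\cdot I_3$, so the eigenvalues of $A$ are the three cube roots of $\alpha\beta\gamma$. Since the sign of a $3$-cycle is $+1$, one has $\det(A)=\alpha\beta\gamma$. On the other hand, $\det(r_1)=\det(r_2)=-1$ (Type 1 reflections have order $2$ with nontrivial eigenvalue $-1$) and $\det(r_3)=\zeta_m'$ by assumption, so $\det(A)=(-1)(-1)\zeta_m'=\zeta_m'$. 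Therefore $A^3=\zeta_m'\cdot I_3$, and the eigenvalues of $A$ are precisely the three cube roots of $\zeta_m'$. These cube roots $\mu,\mu\omega,\mu\omega^2$ (with $\omega=\zeta_3$) are distinct because $\omega\neq 1$.

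I do not expect a real obstacle here; the step that most needs care is confirming the $3$-cycle structure of $\vphi(A)$, i.e.\ ruling out $\vphi(r_1)=\vphi(r_2)$. This is handled by the irreducibility hypothesis via the transitivity criterion (cited from \cite[Lemma 2.13]{ld2009} in the paragraph after Theorem \ref{imprim homo}): if $\vphi(r_1)=\vphi(r_2)$, the subgroup of $S_3$ generated by the $\vphi(r_i)$ would fix a point, contradicting irreducibility. Once this is in place, the remainder is a direct calculation with the monomial form.
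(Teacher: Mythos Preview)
Your proof is correct and follows essentially the same approach as the paper: both use the homomorphism $\vphi$ to establish that $\vphi(A)$ is a $3$-cycle (via transitivity/generation), then exploit the monomial structure of $A$ to read off the eigenvalues from $\det(A)=\zeta_m'$. The only cosmetic difference is that the paper computes the characteristic polynomial $p_A(x)=-(x^3-\zeta_m')$ directly, while you obtain the same conclusion via $A^3=\det(A)\cdot I_3$; these are equivalent one-line computations.
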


	\begin{proof}
		Say we have three reflections $$r_1r_2r_3 = A$$ for some $A \in G$ with Eigv(A) = $\{\lambda_1, \lambda_2, \lambda_3\}$ with some $\lambda_i \neq 1$. Note that since the $r_i$ generate $G$, exactly two of them are of type~\ref{type1} corresponding to distinct transpositions, and the third is of type~\ref{type2} with order $m$, say with nontrivial eigenvalue $\zeta_m'$. Using the homomorphism $\vphi\colon G \rightarrow S_3$, we know that $\vphi(A)$ is even, and since it cannot be the identity, it must be a $3$-cycle.
		Since $A$ is a generalized permutation matrix, we know $\tr(A) = 0$, and we get the following equalities: 
		\begin{align}
			\zeta_m' &= \det (r_1r_2r_3) = \det(A) = \lambda_1\lambda_2\lambda_3, \\
			0 &= \tr(A) = \lambda_1 + \lambda_2 + \lambda_3.
		\end{align}
		Since the $\lambda_i$ are all roots of unity, we know their sum being zero means the arguments differ by $\frac{2 \pi}{3}$.
		Further writing $A = [a_1, a_2, a_3 | (123)]$, without loss of generality, gives us the matrix 
		$$ A = \begin{pmatrix}
			0 & \zeta_m^{a_1} & 0 \\
			0 & 0 & \zeta_m^{a_2} \\
			\zeta_m^{a_3} & 0 & 0 
		\end{pmatrix}. $$
		Thus we can compute the characteristic polynomial as $\Char_A(x) = -(x^3 - \zeta_m^{a_1 + a_2 + a_3})$, and since the constant term of the characteristic polynomial is the determinant of $A$, we get that $\zeta_m' = \zeta_m^{a_1 + a_2 + a_3}$. Thus the $\lambda_i$ must be the distinct cube roots of $\zeta_m'$.
	\end{proof}

	\begin{theorem}
		There are nice 3-tuples in $G(m, 1, 3)$ for all $m >1$. 
	\end{theorem}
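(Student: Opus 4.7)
The plan is to construct an explicit nice 3-tuple for each $m>1$, then verify the three defining conditions. Given the preceding lemma, the structural shape of the tuple is already forced: any nice 3-tuple in $G(m,1,3)$ must consist of two Type 1 reflections (so that the permutations generate $S_3$ and the product has a $3$-cycle as its underlying permutation) and one Type 2 reflection of order $m$ (to have enough reflections of the right types under Theorem \ref{GenSet}).

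First I would take the obvious exemplar: let $r_1 = s(1,2;0)$ and $r_2 = s(2,3;0)$ be the two ``standard'' Type 1 reflections (with $\vphi(r_1)=(1\,2)$, $\vphi(r_2)=(2\,3)$), and let $r_3 = s(1;1)$ be the diagonal Type 2 reflection with $\zeta_m$ in the $(1,1)$ position. Condition (1) of Definition \ref{nice tup} is immediate by construction, and condition (2) follows because $\inn{r_1,r_2}$ maps onto $S_3$ via $\vphi$, so conjugating $r_3$ by $r_1$ and $r_2 r_1$ produces the diagonal reflections $s(2;1)$ and $s(3;1)$; together with the transpositions these exhaust the generators of $G(m,1,3)$ listed in Theorem \ref{GenSet} (alternatively one can appeal directly to Theorem \ref{reflsubgp} (3) applied to the graph on three nodes with edges $\{1,2\}$, $\{2,3\}$ rooted at node $1$, yielding $\inn{r_1,r_2,r_3}=G(m,1,3)$).

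For condition (3), the lemma already did the work: compute $A = r_1 r_2 r_3$ directly and check that $A = [0,1,0 \mid (1\,3\,2)]$, so $a_1+a_2+a_3 = 1$ and the characteristic polynomial is $-(x^3 - \zeta_m)$. Thus the eigenvalues of $A$ are the three distinct cube roots of $\zeta_m$, all different from $1$ since $\zeta_m \neq 1$ for $m>1$. Every eigenvalue of $A^{-1}$ therefore has multiplicity exactly $1 = T-2$ and is nontrivial, so condition (3) is satisfied with $\lambda$ equal to any of these cube roots.

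I do not expect a genuine obstacle here: the only subtlety is making sure the chosen Type 2 reflection interacts with the Type 1 reflections so that the whole group is generated (not just some $G(m',1,3)$ with $m' \mid m$). This is easy for the exemplar above because the parameter of the Type 2 reflection is $1$, which is coprime to $m$; if one wanted to parametrize all such exemplars, Theorem \ref{reflsubgp}(3) would tell us exactly which parameters $b$ for the Type 2 reflection keep us in the full group (namely $\gcd(b,m)=1$), giving an infinite family of nice 3-tuples whenever $\varphi(m)\geq 1$, i.e., for every $m>1$.
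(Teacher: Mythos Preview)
Your proof is correct and follows essentially the same approach as the paper: exhibit an explicit 3-tuple (two Type~1 reflections plus one Type~2 reflection of order $m$), invoke Theorem~\ref{reflsubgp} to verify that it generates $G(m,1,3)$, and use the preceding lemma to conclude that the eigenvalues of the product are the three distinct cube roots of $\zeta_m$, none of which is $1$. The paper's chosen exemplar is $[s(2,3;1),\,s(1,2;1),\,s(3;1)]$ rather than your $[s(1,2;0),\,s(2,3;0),\,s(1;1)]$, but this difference is cosmetic.
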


	\begin{proof}
		The reflections in the tuple $[s(2, 3; 1), s(1, 2; 1), s(3; 1)]$ will generate $G(m, 1, 3)$ by Theorem~\ref{reflsubgp}. By Lemma~\ref{lem4.6}, the product will have eigenvalues corresponding to the three distinct cube roots of $\zeta_m$, all of which cannot be 1.  
	\end{proof}

	\begin{theorem}
		In $G = G(m, m , 3)$, if we have $r_1r_2r_3 = A$ for reflections $r_i$ generating $G$, then the eigenvalues of~$A$ are $\{\lambda^{-2}, \lambda, -\lambda \}$ for $\lambda$ a $\supth{(2m)}$ root of unity.
	\end{theorem}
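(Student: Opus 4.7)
The plan is to use the product map $\vphi: G(m,m,3) \to S_3$ together with the fact that in $G(m,m,3)$ all reflections must be of Type 1. By Theorem \ref{GenSet} (or directly from the fact that Type 2 reflections require $p < m$), each $r_i$ is of the form $s(j_i, k_i; b_i)$, so $\vphi(r_i)$ is a transposition in $S_3$. Hence $\vphi(A) = \vphi(r_1)\vphi(r_2)\vphi(r_3)$ is a product of three transpositions, which is an odd element of $S_3$, hence itself a transposition.

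Next, I would fix notation: after relabeling coordinates if necessary, assume $\vphi(A) = (1\,2)$, so $A = [a_1, a_2, a_3 \mid (1\,2)]$ for some integers $a_i$. Writing this out gives the block matrix
$$A = \begin{pmatrix} 0 & \zeta_m^{a_1} & 0 \\ \zeta_m^{a_2} & 0 & 0 \\ 0 & 0 & \zeta_m^{a_3} \end{pmatrix},$$
whose characteristic polynomial factors as $(x - \zeta_m^{a_3})(x^2 - \zeta_m^{a_1 + a_2})$. Since $A \in G(m,m,3)$, membership forces $a_1 + a_2 + a_3 \equiv 0 \pmod m$, so $\zeta_m^{a_1+a_2} = \zeta_m^{-a_3}$.

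Finally, I would introduce $\lambda$ by choosing a square root: let $\lambda$ be any complex number with $\lambda^2 = \zeta_m^{-a_3}$, equivalently $\lambda^{-2} = \zeta_m^{a_3}$. Then the two roots of $x^2 - \zeta_m^{a_1+a_2}$ are exactly $\lambda$ and $-\lambda$, while the remaining eigenvalue $\zeta_m^{a_3}$ equals $\lambda^{-2}$. This gives $\on{Eigv}(A) = \{\lambda^{-2}, \lambda, -\lambda\}$, as required. The generation hypothesis $\inn{r_1, r_2, r_3} = G(m,m,3)$ is not used in the eigenvalue computation itself; it only enters implicitly (via Theorem \ref{reflsubgp}) if one wants to further constrain which $(a_1, a_2, a_3)$ can actually arise.

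The main obstacle is essentially notational rather than mathematical: one must be careful that the "WLOG $\vphi(A) = (1\,2)$" step is genuinely harmless, which follows because conjugating $A$ by the corresponding permutation matrix (an element of $G(m,m,3)$) preserves eigenvalues. Once that is settled, the computation is a direct determinant expansion combined with the defining congruence $a_1 + a_2 + a_3 \equiv 0 \pmod m$ of $G(m,m,3)$.
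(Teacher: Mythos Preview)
Your proof is correct and follows essentially the same approach as the paper: identify $\vphi(A)$ as a transposition, factor the characteristic polynomial as a linear times a quadratic, and relate the constants via the $G(m,m,3)$ condition. The only cosmetic difference is that the paper phrases the last step via $\det(A) = -1$ rather than the congruence $a_1+a_2+a_3 \equiv 0 \pmod m$, but these are equivalent.
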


	\begin{proof}
		Since all the $r_i$ are of type~\ref{type1}, using $\vphi\colon G \rightarrow S_3$, we see that $\vphi(A)$ must be a transposition, say $(ij) \in S_3$. Writing $A = [a_1, a_2, a_3 \mid (ij)]$, we see that $\tr(A) = \zeta_m^{a_k}$, where $k \neq i, j$, and $\det(A) = -1$. Moreover, we know that $\zeta_m^{a_k}$ is an eigenvalue of $A$ since the basis vector $e_k$ is an eigenvector. 
        Letting $\{ \lambda_i, \lambda_j, \zeta_m^{a_k}\}$ denote the eigenvalues of $A$, from the trace and determinant of $A$, we have
		\begin{align*}
			 \lambda_i + \lambda_j &= 0,  \\
		\lambda_i \lambda_j& = -\zeta_m^{-a_k}.
		\end{align*}
		This tells us that $\lambda_i = -\lambda_j$ and $\lambda_i^2 = \zeta_m^{-a_k}$, so $\lambda_i$ must be a $\supth{(2m)}$ root of unity (not necessarily primitive).
	\end{proof}

    \begin{remark}
        Note that in the proof above and throughout this work, the convention for the characteristic polynomial is $\det(A -xI)$.
    \end{remark}

    \begin{corollary}
		Nice $3$-tuples exist in $G(m, m, 3)$ for all $m >1$. 
    \end{corollary}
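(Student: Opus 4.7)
The plan is to exhibit, for each $m > 1$, an explicit 3-tuple of Type 1 reflections in $G(m,m,3)$ and verify directly that it satisfies all three conditions of Definition \ref{nice tup}. Since the preceding theorem has already pinned down the eigenvalue structure of any such product, the work that remains is to check generation of the ambient group and the multiplicity-one condition for one specific tuple.

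First I would take $X = [s(1,2;1),\, s(2,3;0),\, s(1,3;0)]$. Condition (1) is immediate, as each entry is Type 1. For condition (2), the associated graph $\Gamma_X$ is the triangle on $\{1,2,3\}$, hence connected with exactly one cycle; traversing the cycle $1 \to 2 \to 3 \to 1$ gives $\delta(X) = |1 + 0 + 0| = 1$, which is coprime to every $m$. Theorem \ref{reflsubgp}(1) then yields $\inn{X} = G(m,m,3)$.

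Next, for condition (3), applying the preceding theorem with fixed point $k=1$ shows that the product $A = r_1 r_2 r_3$ has eigenvalues $\{\zeta_m,\, \zeta_{2m}^{-1},\, -\zeta_{2m}^{-1}\}$. For $m > 1$ none of these equals $1$: $\zeta_m \neq 1$ by hypothesis, $\zeta_{2m}^{-1} \neq 1$ since $2m \geq 4$, and $-\zeta_{2m}^{-1} = 1$ would force $\zeta_{2m} = -1$, i.e.\ $m = 1$. Hence all three eigenvalues are nontrivial. A short case analysis of when two of them can coincide reduces to the equation $\zeta_{2m}^3 = \pm 1$, whose only integer solution with $m > 1$ is $m = 3$; even in that degenerate case the remaining eigenvalue $\zeta_{2m}^{-1}$ has multiplicity exactly one. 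In all other cases the three eigenvalues are pairwise distinct, so every one of them has multiplicity one, and condition (3) is satisfied.

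The main subtlety I anticipate is the borderline collision at $m = 3$, where two of the three eigenvalues merge; but the uniform argument above (via $\zeta_{2m}^3 = \pm 1$) shows the third eigenvalue remains simple and nontrivial, so the case is absorbed cleanly. With this, the proposed tuple $X$ is a nice 3-tuple for every $m > 1$.
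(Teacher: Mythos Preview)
Your proof is correct and follows essentially the same approach as the paper: exhibit an explicit tuple of Type~1 reflections, verify generation via the $\delta(X)$ criterion of Theorem~\ref{reflsubgp}, and check the eigenvalue condition using the preceding theorem. The only differences are cosmetic---the paper uses the tuple $[s(2,3;1),\,s(1,2;1),\,s(1,2;0)]$ (whose graph has a double edge, $\delta(X)=|1-0|=1$) rather than your triangle, and it dispatches condition~(3) more tersely by noting that the two roots of $x^2-\zeta_m^{-1}$ are always distinct, so the only obstruction would be the configuration $\{\zeta_m,\zeta_m,1\}$, which is ruled out since $1$ being a root of the quadratic forces $\zeta_m=1$; your exhaustive collision analysis isolating $m=3$ is more explicit but arrives at the same conclusion.
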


    \begin{proof}
		In this case we can consider reflections $[s(2, 3; b_1), s(1, 2; b_2), s(1, 2; b_3)]$ with $b_2$ and $b_3$ chosen so that $|b_2 -b_3|$ is coprime to $m$. Taking $b_1 = 1$, $b_2 = 1$, and $b_3 = 0$, we get that $\delta(X) = |b_2 -b_3|$ is coprime to $m$, and so by Theorem~\ref{reflsubgp}, we know these reflections will generate $G(m, m, 3)$. The characteristic polynomial of the product $r_1r_2r_3 = A$ will in this case be $\Char_A(x) = -(x-\zeta_m)(x^2 -\zeta_m^{-1})$. Since the roots of the quadratic will be distinct, we know we will have at least two distinct eigenvalues, so the only worry is if the eigenvalues might be $\{\zeta_m, \zeta_m, 1\}$ as then there is no nontrivial eigenvalue of multiplicity 1. If 1 is a root of the quadratic, then we know $\zeta_m^{-1} = 1$ and $\zeta_m = 1$, which is not possible. Hence, such nice tuples always exist. 
	\end{proof}
	
	\subsection{Nice 4-tuples in $\boldsymbol{G(m, p, 3)}$}
	We are seeking nice tuples of four reflections from $G(m, p, 3)$ such that their product has an nontrivial eigenvalue $\lambda$ of multiplicity 2. This can be written as the identity 
	\begin{align} \label{4-tup imprim 3}
		r_1r_2r_3r_4 = \lambda r_{5}^{-1},  
	\end{align}
	where the $r_i$ generate $G(m, p, 3)$ and $r_5$ is some reflection, not necessarily in $G(m, p, 3)$. 
	
	\begin{lemma}\label{eqn 5 thm}
		In the notation of Equation~\eqref{4-tup imprim 3} above, for any such nice tuple, it must be that $r_5^{-1}$ is in $G(m, 1, 3)$ and $\lambda$ is an $\supth{m}$ root of unity. Moreover, in any $G(m, p,3 )$ $($including $p = 1$ and $p= m)$,  it must be that exactly four of these reflections are of type~{\rm\ref{type1}} and only one is of type~{\rm\ref{type2}}. 
	\end{lemma}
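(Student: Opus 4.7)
The plan is to prove both parts by analyzing the matrix $M := r_1 r_2 r_3 r_4 = \lambda r_5^{-1}$ via the natural surjection $\varphi: G(m,1,3) \twoheadrightarrow S_3$ from Theorem \ref{imprim homo}. First I would observe that since each $r_i \in G(m,p,3) \subseteq G(m,1,3)$, the product $M$ is itself a generalized permutation matrix with nonzero entries that are $m$-th roots of unity, and then do a case split on $\sigma := \varphi(M) \in S_3$.

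If $\sigma$ is a $3$-cycle, then (as in the preceding lemma on nice $3$-tuples in $G(m,1,3)$) the characteristic polynomial of $M$ has the form $-x^3 + \zeta_m^{a}$, so the three eigenvalues are distinct cube roots and no eigenvalue has multiplicity $2$, contradicting niceness. If $\sigma = e$, then $M$ is diagonal with $m$-th root entries; the eigenvalue $\lambda$ (of multiplicity $2$) equals two of these entries, so $\lambda$ is an $m$-th root of unity, the third entry is $\lambda\alpha$ for some $m$-th root $\alpha$, and $r_5^{-1} = \lambda^{-1}M$ is a diagonal Type $2$ reflection in $G(m,1,3)$. If $\sigma$ is a transposition, then $M$ decomposes into a $1\times 1$ diagonal block (entry $\zeta_m^{c}$) and a $2 \times 2$ anti-diagonal block whose two eigenvalues $\pm\sqrt{\zeta_m^{c'}}$ are distinct; hence the multiplicity-$2$ eigenvalue $\lambda$ must coincide with $\zeta_m^{c}$, forcing $\lambda$ to be an $m$-th root, the third eigenvalue to be $-\lambda$, and $r_5^{-1}$ to be a Type $1$ reflection lying in $G(m,m,3) \subseteq G(m,1,3)$.

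For the Type $1$ versus Type $2$ count, I would apply $\varphi$ to the identity $r_1 r_2 r_3 r_4 = \lambda r_5^{-1}$ to obtain $\varphi(r_1)\varphi(r_2)\varphi(r_3)\varphi(r_4) = \varphi(r_5^{-1})$, since the scalar $\lambda$ does not affect the underlying permutation. By Theorem \ref{imprim homo} each $\varphi(r_i)$ is either identity (Type $2$) or a transposition (Type $1$). Writing $k_1$ for the number of Type $1$ reflections among $r_1, \ldots, r_4$, the requirement that $\{r_1, \ldots, r_4\}$ generate the irreducible group $G(m,p,3)$ forces their $\varphi$-images to generate a transitive subgroup of $S_3$, which, given the possible images, requires at least two \emph{distinct} transpositions and so rules out $k_1 \leq 1$. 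A parity and product analysis handles what remains: if $k_1 = 2$ the product is a $3$-cycle, incompatible with $\varphi(r_5^{-1}) \in \{e\} \cup \{\text{transpositions}\}$; if $k_1 = 3$ the product is a transposition, so $r_5$ is Type $1$ and the total Type $1$ count is $4$; if $k_1 = 4$ the product lies in $A_3 \cap (\{e\} \cup \{\text{transpositions}\}) = \{e\}$, so $r_5$ is Type $2$ and the total Type $1$ count is again $4$. Either surviving case yields exactly four Type $1$ and one Type $2$ reflection.

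The main obstacle I anticipate is the transposition case in the first phase, where the precise structure of $r_5^{-1}$ must be extracted from the characteristic polynomial and its membership in $G(m,m,3)$ verified; the eigenvalue-multiplicity condition forces the exponent relation $2c \equiv c' \pmod{m}$, which is precisely the relation needed for $\lambda^{-1}M$ to satisfy the Type $1$ constraint $a_i + a_j \equiv 0 \pmod{m}$. Once this structural identification is in hand, the parity count in the second phase is essentially automatic.
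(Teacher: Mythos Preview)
Your argument is correct but takes a different route from the paper in the first half. The paper gets that $\lambda$ is an $m$-th root of unity in one line by invoking Theorem~\ref{ReflEigvThm}: since $\lambda$ has multiplicity $2$ in an element of $G(m,1,3)$, its order must divide at least two of the degrees $\{m,2m,3m\}$, hence divides $m$; then $\lambda I$ lies in the center of $G(m,1,3)$ and $r_5^{-1} = \lambda^{-1}M \in G(m,1,3)$ follows immediately. You instead do a direct case split on the cycle type of $\varphi(M)$ and read off $\lambda$ and the structure of $r_5^{-1}$ from the explicit characteristic polynomial in each case. Your route is more elementary (it avoids Springer's theorem entirely) and also yields finer information---e.g.\ in the transposition case you actually place $r_5^{-1}$ in $G(m,m,3)$, not just $G(m,1,3)$---at the cost of being longer.

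For the Type~1/Type~2 count, the paper rearranges to $r_1r_2r_3r_4r_5 = \lambda I$, applies $\varphi$ to all five reflections at once, and uses parity plus transitivity to force exactly four transpositions. Your version, counting Type~1 reflections among $r_1,\dots,r_4$ and then inferring the type of $r_5$ from $\varphi(r_5^{-1})$, is a mild reorganization of the same idea and equally valid. One small point: in your $k_1=2$ case you implicitly use that the two transpositions are distinct (else transitivity fails), which is exactly the observation the paper makes explicit; you may want to state it.
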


	\begin{proof}
		Note that since $\lambda$ is an eigenvalue of multiplicity 2 for some matrix in $G(m, 1, 3)$, it must be that the order of $\lambda$ divides at least two degrees in $\deg G(m, 1, 3) = \{m, 2m , 3m\}$. Thus the order of $\lambda$ divides $m$, and so $\lambda I $ is in the center of $G(m, 1, 3)$. This tells us that $r_5$ is always in $G(m, 1, 3)$. To generate any of $G(m, p, 3)$, we know the set $\{r_i\}_{i \leq 5}$ must contain at least two reflections of type~\ref{type1} by Theorem~\ref{GenSet}.  For any $p$, rearranging the equation and using the homomorphism $\vphi\colon G(m, 1, 3) \rightarrow S_3$, we get that $\vphi(r_1r_2r_3r_4r_5) = \vphi(\lambda I) = e$. Since $e$ is even, we know there must be exactly two or four reflections of type~\ref{type1}. The product of two transpositions can only be the identity if they are the same transposition, and we need at least two distinct transpositions to generate a transitive subgroup of $S_3$; hence there are exactly four reflections of type~\ref{type1} and one reflection of type~\ref{type2} for any $G(m, p, 3).$  
	\end{proof}
        
	Note that this does not prove that $\lambda$ is a \textit{primitive} $\supth{m}$ root; however, in the next three theorems, we will conclude this is indeed always the case. 
	Up to rearranging  Equation~\eqref{4-tup imprim 3}, we may assume $r_5^{-1}$ is the unique reflection of type~\ref{type2} and label its nontrivial eigenvalue $\zeta$, which we know must be some $\supth{m}$ root of unity, not necessarily primitive. 
	
	\begin{remark} \label{trans 4 fac of e}
		To study the tuples in the cases where they might exist, we first study the number of ways to factorize the identity in $S_3$ into a transitive product of four transpositions. In $S_3$ any two distinct transpositions will generate a transitive subgroup which must be all of $S_3$, so fixing the first transposition as $(12)$, we only need to find, up to simultaneous conjugacy, tuples of three transpositions that multiply to $(12)$. Using computer search to find such conjugacy classes of 3-tuples, then adjoining with $(12)$ on the left yields four options: 
		\begin{enumerate}
			\item $[(12), (12), (23), (23)]$,
			\item $[(12), (23), (12), (13)]$,
			\item $[(12), (23), (23), (12)]$,
			\item $[(12), (13), (23), (13)]$.
		\end{enumerate}
		Note that these tuples are multiplied starting at the leftmost transposition, although that does not matter in this case. 
		Converting these tuples to type~\ref{type1} reflections, we can compute the product for each option: 
		\begin{enumerate}
			\item\label{fact1} $s(1, 2; b_1)s(1, 2; b_2)s(2, 3; b_3)s(2, 3; b_4) = \begin{pmatrix}
				\zeta_{m}^{b_1 -b_2} & 0 & 0 \\
				0  & \zeta_m^{-b_1 +b_2 +b_3-b_4} & 0 \\
				0 & 0 & \zeta_m^{-b_3 + b_4} 
			\end{pmatrix}$,
			\item\label{fact2} $s(1, 2; b_1)s(2, 3; b_2)s(1, 2; b_3)s(1, 3; b_4) = \begin{pmatrix}
				\zeta_{m}^{b_1 +b_2 -b_4} & 0 & 0 \\
				0  & \zeta_m^{-b_1 +b_3} & 0 \\
				0 & 0 & \zeta_m^{-b_2 -b_3+ b_4} 
			\end{pmatrix}$,
			\item\label{fact3} $s(1, 2; b_1)s(2, 3; b_2)s(2, 3; b_3)s(1, 2; b_4) = \begin{pmatrix}
				\zeta_{m}^{b_1 +b_2 -b_3-b_4} & 0 & 0 \\
				0  & \zeta_m^{-b_1 +b_4} & 0 \\
				0 & 0 & \zeta_m^{-b_2 + b_3} 
			\end{pmatrix}$,
			\item\label{fact4} $s(1, 2; b_1)s(1, 3; b_2)s(2, 3; b_3)s(1, 3; b_4) = \begin{pmatrix}
				\zeta_{m}^{b_1 +b_3 -b_4} & 0 & 0 \\
				0  & \zeta_m^{-b_1 +b_2 -b_3} & 0 \\
				0 & 0 & \zeta_m^{-b_2 + b_4} 
			\end{pmatrix}$.
		\end{enumerate}
		Equations for $\{b_i\}$ can be found by setting each of these matrices equal to $\lambda r_5^{-1}$:
		\begin{align*} \begin{pmatrix}
				\lambda & 0 & 0 \\
				0 & \lambda & 0 \\
				0 & 0 & \lambda \zeta
			\end{pmatrix}
		\end{align*}
		for some root $\zeta$ with order dividing $m$. The choice of putting $\zeta$ in the third row is arbitrary; $\zeta$ could have been placed in any diagonal entry.
	\end{remark}

	\begin{lemma} \label{delX lemma}
		For any nice $4$-tuple in $G(m, p, 3)$, $p\neq 1$, it must be that either $\lambda$ or $\lambda \zeta$ in the notation of Remark~\ref{trans 4 fac of e} has the same order as $\zeta_m^{\delta(X)}$, where $X$ is any generating reflection set given by the nice $4$-tuple.
	\end{lemma}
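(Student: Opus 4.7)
The plan is a direct verification across the four explicit transitive factorizations of the identity into four transpositions tabulated in Remark \ref{trans 4 fac of e}. By Lemma \ref{eqn 5 thm}, after rearranging the equation so that $r_5$ is the unique Type 2 reflection, the reflections $r_1,r_2,r_3,r_4$ of the nice tuple are all of Type 1 and so lie in $G(m,m,3)$. Any generating reflection set $X$ coming from the nice 4-tuple is then a set of Type 1 reflections, and the associated graph $\Gamma_X$ has the three indices $\{1,2,3\}$ as its nodes.

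First, I would examine the diagonal product $r_1r_2r_3r_4$ recorded in each of the four cases of Remark \ref{trans 4 fac of e} and compare it with $\lambda r_5^{-1} = \on{diag}(\lambda,\lambda,\lambda\zeta)$. The three resulting identities force two of the diagonal exponents of $\zeta_m$ to coincide with $\lambda$ and the third to coincide with $\lambda\zeta$. The key observation is that each diagonal exponent has exactly the form $\sum \pm b_h$ summed over the edge labels of a cycle in $\Gamma_X$, with signs determined by the orientation convention $s(a,b;c)=s(b,a;-c)$. In each of the four cases, the two exponents giving $\lambda$ correspond to cycles whose $\delta$-values coincide modulo $m$, and the exponent giving $\lambda\zeta$ corresponds to a different cycle of $\Gamma_X$.

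To conclude, I would fix any generating reflection subset $X$ of the nice 4-tuple whose graph is connected with a single cycle, so that $\delta(X)$ is well defined. In each case I would enumerate the possible such subsets (three edges out of four, discarding any whose removal disconnects the graph) and check that the unique cycle of $\Gamma_X$ is either one of the ``$\lambda$-cycles'' identified above or the ``$\lambda\zeta$-cycle''. Since the order of $\zeta_m^{\delta(X)}$ equals $m/\gcd(m,\delta(X))$, and $\delta(X)$ matches (up to sign and modulo $m$) the corresponding exponent of $\zeta_m$ producing $\lambda$ or $\lambda\zeta$, one obtains the desired equality of orders.

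The main obstacle will be the bookkeeping in Cases 2 and 4 of Remark \ref{trans 4 fac of e}, where $\Gamma_X$ contains both a double-edge loop and a triangle cycle. There, different 3-edge subgraphs yield genuinely distinct $\delta$-values, and one must verify the matching with $\lambda$ or $\lambda\zeta$ for each possible configuration while carefully tracking sign changes introduced by edge reversals in the cycle sum. A secondary subtlety is confirming that discarding the Type 2 reflection $r_5$ after rearrangement does not lose information about $X$ as a generating set of $G(m,m,3)$, since it is precisely the Type 1 subgroup structure that underlies the definition of $\delta$.
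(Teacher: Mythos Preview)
Your proposal is correct and takes essentially the same route as the paper: reduce via Lemma~\ref{eqn 5 thm} to the four factorizations of Remark~\ref{trans 4 fac of e}, enumerate the connected one-cycle three-edge subsets $X$, and verify that each possible $\delta(X)$ matches (up to sign modulo $m$) one of the diagonal exponents of the product, hence has the same order as either $\lambda$ or $\lambda\zeta$. Your secondary concern about discarding the Type~2 reflection is unnecessary, since $\Gamma_X$ and $\delta(X)$ depend only on the Type~1 reflections in $X$; the paper handles this implicitly by noting that any generating set for $G(m,p,3)$ with $p\neq 1$ contains at least three Type~1 reflections.
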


	\begin{proof}
		By Lemma~\ref{eqn 5 thm}, given any nice 4-tuple in $G(m, p, 3)$ satisfying Equation \eqref{4-tup imprim 3}, we can rearrange all the type~\ref{type1} reflections to be one of the factorizations given in Remark~\ref{trans 4 fac of e} up to simultaneous conjugation. As any reflection set $X$ generating $G(m, p, 3)$, $p \neq 1$, will only need at least three type~\ref{type1} reflections, we see that for each of the four factorizations above, there are multiple ways to choose $X$ so that the associated graph only contains one cycle. For example, the first factorization gives us the possibilities: $X = \{s(1, 2; b_1), s(1, 2; b_2), s(2, 3; b_i)\}$ for $i = 3, 4$ and $X = \{s(1, 2; b_i), s(2, 3; b_3), s(2, 3; b_4)\}$ for $i = 1, 2$. So we know $\delta(X)$ must be of the form
		$$ \delta(X) = \begin{cases}
			|b_1 -b_2|, \\
			|b_3 -b_4|.
		\end{cases}$$
		By the equations each factorization must satisfy, we know $\zeta_m^{b_1 -b_2} = \lambda$ and $\zeta_m^{-b_3 +b_4} = \lambda \zeta$, so one of these must have the same order as $\zeta_m^{\delta(X)}$. The argument is similar for the third factorization since the formula for $\delta(X)$ only differs in the indices. The possible $\delta(X)$ values from the second and fourth factorizations only differ in the indices, so we only justify the result for the second factorization. In this case the options for $\delta(X)$ are 
		$$ \delta(X) = \begin{cases}
			|b_1 +b_2 -b_4|, \\
			|b_1 -b_3|, \\
			|b_2 +b_3 -b_4|.
		\end{cases}$$
		Up to taking absolute value, the first two options must equal $\lambda$, and the third must be $\lambda \zeta$, so one of these has the same order as $\zeta_m^{\delta(X)}$.
	\end{proof}

	\begin{theorem}
	  Nice $4$-tuples exist in $G(m, m, 3)$ if and only if $m \neq 3$. For $m \neq 3$, we have the following:
		\begin{enumerate}
		\item If\, $\gcd(m, 3) = 1$, then $\Order(\lambda) = \Order(\zeta) = m$.
			\item If\, $\gcd(m ,3) = 3$, then $\Order(\lambda) = m = 3 \Order(\zeta)$ with $\Order(\zeta) \neq 1$. 
		\end{enumerate} 
	\end{theorem}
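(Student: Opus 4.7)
The plan is to combine the determinant identity arising from a product of four Type~1 reflections with the generation constraint already packaged by Lemma~\ref{delX lemma}, and then verify both implications by an explicit construction and a contradiction.

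First I would observe that $G(m, m, 3)$ contains no Type~2 reflections, so Lemma~\ref{eqn 5 thm} forces the four reflections $r_1, r_2, r_3, r_4$ of any nice 4-tuple to be Type~1 while $r_5^{-1}$ is the unique Type~2 element, living in $G(m, 1, 3)$, with nontrivial eigenvalue $\zeta$ some $m$-th root of unity. Each Type~1 reflection has determinant $-1$, so taking determinants on both sides of $r_1 r_2 r_3 r_4 = \lambda r_5^{-1}$ gives $1 = \lambda^3 \zeta$, i.e.\ $\zeta = \lambda^{-3}$. This identity ties $\zeta$ to $\lambda$ regardless of which factorization in Remark~\ref{trans 4 fac of e} the tuple corresponds to.

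Next I would pin down $\operatorname{ord}(\lambda)$. By Lemma~\ref{delX lemma}, the generation of $G(m, m, 3)$ by a nice 4-tuple produces a 3-reflection subset $X$ with $\gcd(\delta(X), m) = 1$, and $\zeta_m^{\delta(X)}$ has the same order as $\lambda$ or as $\lambda \zeta = \lambda^{-2}$. Since $\lambda$ is already an $m$-th root of unity, letting $d = \operatorname{ord}(\lambda)$ divide $m$, we have $\operatorname{ord}(\lambda^{-2}) = d / \gcd(d, 2) \leq d \leq m$, with equality to $m$ only when $d = m$. Hence $\operatorname{ord}(\lambda) = m$ in every case, and then $\operatorname{ord}(\zeta) = \operatorname{ord}(\lambda^{-3}) = m / \gcd(m, 3)$, which yields precisely the two eigenvalue cases claimed.

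Finally, the necessity of $m \neq 3$ drops out: if $m = 3$ then $\zeta = \lambda^{-3} = 1$, so $r_5^{-1} = I$ and the product $r_1 r_2 r_3 r_4 = \lambda I$ is a scalar matrix; then $\operatorname{rank}(\lambda \cdot r_1 r_2 r_3 r_4 - I) = 0$, so Theorem~\ref{gen_prop of MC} forces $\dim MC_\lambda(V) = 1$ rather than $2$, contradicting niceness. For existence when $m \neq 3$, I would use factorization~1 of Remark~\ref{trans 4 fac of e} with $(b_1, b_2, b_3, b_4) = (1, 0, 2, 0)$; the product is $\operatorname{diag}(\zeta_m, \zeta_m, \zeta_m^{-2})$, in which $\zeta_m$ has multiplicity exactly $2$ precisely when $\zeta_m^3 \neq 1$, i.e.\ when $m \neq 3$, and the 3-subset comprising the first three reflections has $\delta = |b_1 - b_2| = 1$ coprime to $m$, so Theorem~\ref{reflsubgp}(1) ensures generation of $G(m, m, 3)$. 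The main obstacle is the generation step: one must confirm that across the four factorizations of Remark~\ref{trans 4 fac of e} a single-cycle 3-subset with the stated $\delta(X)$ is always available, which is precisely what Lemma~\ref{delX lemma} is built to deliver; once that is in hand the determinant arithmetic and the explicit existence construction are routine.
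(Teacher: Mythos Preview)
Your proposal is correct and follows essentially the same route as the paper: the determinant identity $\zeta = \lambda^{-3}$, Lemma~\ref{delX lemma} together with the $\delta(X)$-coprime-to-$m$ criterion from Theorem~\ref{reflsubgp}(1) to force $\operatorname{ord}(\lambda) = m$, the case split on $\gcd(m,3)$, and the same explicit tuple $[s(1,2;1),\,s(1,2;0),\,s(2,3;2),\,s(2,3;0)]$ for existence.

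One small slip in your $m = 3$ necessity argument: in this section the paper's $\lambda$ is the eigenvalue of the \emph{product} (see the remark closing the subsection), so the middle-convolution parameter is $\lambda^{-1}$, and the rank you want is $\operatorname{rank}(\lambda^{-1}\cdot \lambda I - I) = 0$. As you wrote it, $\operatorname{rank}(\lambda \cdot \lambda I - I) = \operatorname{rank}((\lambda^{2}-1)I) = 3$ for $\lambda$ a primitive cube root, which would give $\dim MC = 4$, not $1$. The paper sidesteps this entirely: since $r_5$ is by setup a reflection, its nontrivial eigenvalue $\zeta \neq 1$, whence $\lambda^{3} = \zeta^{-1} \neq 1$ and so $m \neq 3$ immediately.
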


	\begin{proof}
		To generate $G(m, m, 3)$, we know that our nice 4-tuple must contain a generating reflection set $X$ such that $\delta(X)$ is coprime to $m$, which means either $\lambda$ or $\lambda \zeta$ has order $m$, by Lemma~\ref{delX lemma}. If $\lambda$ does not have order $m$, then $\lambda \zeta$ has order $m$, but taking the determinant of Equation~\eqref{4-tup imprim 3}, we know that $1 = \lambda^3 \zeta$, so~$\lambda^2$ has order $m$, which means $\lambda$ has order $m$. Thus, $\lambda$ is always a primitive $\supth{m}$ root. Moreover, we know $m \neq 3$ since the determinant tells us $\lambda^3 = \zeta^{-1}$ and $\zeta \neq 1$ as it is the nontrivial eigenvalue of the type~\ref{type2} reflection~$r_5^{-1}$.
		
		If $\gcd(m, 3) = 1$, then we know $\lambda^3 = \zeta^{-1}$ is also a primitive $\supth{m}$ root, and hence so is $\zeta$. Using factorization~\eqref{fact1},  we can take the tuple $[s(1, 2; 1), s(1, 2; 0), s(2, 3; 2), s(2, 3; 0)]$, so $\lambda = \zeta_m$ and $\zeta = \zeta_m^{-3}$. This tuple generates $G(m, m, 3)$  by Theorem~\ref{reflsubgp} since taking $X = \{s(1, 2; 1), s(1, 2; 0), s(2, 3; 2)\}$ gives $\delta(X) = 1$, which is coprime to $m$.  
		
		If $\gcd(m, 3) = 3$, then $\lambda^3$ has order $m/3$ and thus so too will $\zeta$. Again taking the tuple as above will work, since $\delta(X) = 1$ will still be coprime to $m$ and the product will give $\lambda = \zeta_m$ and $\zeta = \zeta_m^{-3}$, which now is not a primitive $\supth{m}$ root.  
	\end{proof}
	
		\begin{theorem}
		Nice $4$-tuples exist in $G(m, 1, 3)$ if and only if\, $\gcd(m, 3) = 1$. In this case the order of\, $\lambda$ is $m$.
	        \end{theorem}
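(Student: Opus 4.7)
The plan is to prove necessity and sufficiency separately, with the claim about $\mathrm{ord}(\lambda) = m$ emerging naturally from the necessity argument. Throughout I use the setup of Lemma \ref{eqn 5 thm}: any nice 4-tuple satisfies $r_1 r_2 r_3 r_4 = \lambda r_5^{-1}$ with exactly one Type 2 reflection and four Type 1 reflections among $r_1, \ldots, r_5$.

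For the necessity direction, since all Type 1 reflections lie in $G(m, m, 3)$, the generation hypothesis $\langle r_1, \ldots, r_4 \rangle = G(m, 1, 3)$ forces the Type 2 reflection to be one of $r_1, \ldots, r_4$ (so $r_5$ is Type 1); applying Theorem \ref{reflsubgp}(2) or (3) to the resulting reflection set then forces $\gcd(b, m) = 1$ for the Type 2 parameter $b$, so the Type 2 reflection has order exactly $m$ and its nontrivial eigenvalue is $\zeta_m^a$ with $\gcd(a, m) = 1$. Taking determinants of the equation (Type 1 contributes $-1$, Type 2 contributes $\zeta_m^a$) yields $-\zeta_m^a = -\lambda^3$, so $\lambda^3$ is a primitive $m$-th root of unity. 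By Theorem \ref{ReflEigvThm} applied to $G(m, 1, 3)$, whose degrees are $\{m, 2m, 3m\}$, we have $\mathrm{ord}(\lambda) \mid m$; combining this divisibility with the relation $\mathrm{ord}(\lambda)/\gcd(\mathrm{ord}(\lambda), 3) = m$ forces $\gcd(\mathrm{ord}(\lambda), 3) = 1$ and $\mathrm{ord}(\lambda) = m$, and hence $\gcd(m, 3) = 1$.

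For the sufficiency direction, assume $\gcd(m, 3) = 1$ and let $k \in \Z/m\Z$ denote the multiplicative inverse of $3$. I will verify that the 4-tuple
\[ \mathbf{A} = [s(1, 2; -2k),\ s(1, 2; 0),\ s(1; 1),\ s(2, 3; 0)] \]
is nice. A direct block computation writes the product in the form $\mathrm{diag}(\zeta_m^{-2k+1}) \oplus B$, with $B$ the $2 \times 2$ antidiagonal matrix having eigenvalues $\pm \zeta_m^k$; the congruence $3k \equiv 1 \pmod m$ rewrites $\zeta_m^{-2k+1}$ as $\zeta_m^k$, so the full product has $\lambda = \zeta_m^k$ as an eigenvalue of multiplicity $2$. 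Generation of $G(m, 1, 3)$ then follows from Theorem \ref{reflsubgp}(2): the graph $\Gamma_X$ has nodes $\{1, 2, 3\}$, a double edge between nodes $1$ and $2$ forming the unique cycle (so $\delta(X) = 2k$) and a single edge between nodes $2$ and $3$, and the Type 2 parameter $b = 1$ gives $m_1 = \gcd(1, m, 2k) = 1$, so $\langle \mathbf{A} \rangle = G(m, 1, 3)$.

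The main technical hurdle is the interplay between the determinant constraint $\lambda^3 = \zeta_m^a$ and the multiplicity-$2$ eigenvalue constraint in the sufficiency direction: the latter imposes a quadratic relation among the exponents of the Type 1 reflections, and the two constraints are jointly satisfiable precisely when $3$ is invertible modulo $m$. This single fact both explains the obstruction when $3 \mid m$ and motivates the specific shape of the explicit example, which is why I expect the sufficiency construction (rather than the necessity deduction) to be the subtler part of the proof.
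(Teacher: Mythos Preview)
Your proof is correct and follows essentially the same approach as the paper: the necessity argument via the determinant relation $\lambda^3 = (\text{primitive }m\text{-th root})$ together with $\mathrm{ord}(\lambda)\mid m$ is identical in substance, and the sufficiency argument likewise proceeds by exhibiting an explicit tuple and verifying generation via Theorem~\ref{reflsubgp}. The only cosmetic difference is the choice of explicit example---the paper uses the tuple $[s(1,2;0),\,s(2,3;1),\,s(2,3;-1),\,s(3;m-3)]$ and checks generation with the tree case of Theorem~\ref{reflsubgp}(3), whereas you use a tuple with a double edge and invoke the one-cycle case of Theorem~\ref{reflsubgp}(2); both work equally well.
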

                
	\begin{proof}
		Let $[r_2, r_3, r_4, r_5]$ be a nice 4-tuple in $G(m, 1, 3)$. Let $r_5$ be the unique reflection of type~\ref{type2} with nontrivial eigenvalue $\zeta^{-1}$ of order $m$, so we can write $\zeta^{-1} = \zeta_m^{-b}$ with $(-b, m) = 1$. Let $d = \Order(\lambda)$. We know $d\mid m$ since $\lambda I$ is in the center $ZG(m, 1, 3)$. Taking determinants, we get the equation
		$$\lambda^3 = \zeta^{-1},$$
		so $m \mid d$ since $(\zeta^{-1})^d = 1$;  hence $d = m$. Since $\zeta^{-1}$ is a primitive $\supth{m}$ root, so too is $\lambda^3$, and therefore we know $\gcd(m, 3) = 1$.
		
		Let $m$ be such that $\gcd(m, 3) = 1$, so we know $m-3$ is coprime to $m$. Taking $b = m-3$, we can consider the tuple $[s(1, 2; 0), s(2, 3; 1), s(2, 3; -1), s(3; m-3)]$. Taking $X = \{s(1, 2; b_2), s(2, 3; b_3), s(3; -b)\}$, we see by Theorem~\ref{reflsubgp} that $\inn{X} = G(m, 1, 3)$ since $\gcd(m-3, m) = 1$.
		Taking the product, we get $r_2r_3r_4r_5 = \lambda r_1^{-1}$ for some type~\ref{type1} reflection $r_1 = s(1, 2; -1)$ with $\lambda = \zeta_m$.
	\end{proof}

	\begin{theorem}
		Suppose $p \neq 1$ and $p\neq m$. Then there exists a nice $4$-tuple in $G(m, p, 3)$ if and only if $p = 3$. In this case $\lambda$ must be of order $m$.
	\end{theorem}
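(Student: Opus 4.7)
The plan is to mimic the approach of the preceding two theorems (for $G(m, m, 3)$ and $G(m, 1, 3)$) using the structural setup of Lemma \ref{eqn 5 thm}, Remark \ref{trans 4 fac of e}, and the reflection-subgroup classification of Theorem \ref{reflsubgp}, but adapted to the fact that the unique Type 2 reflection must now sit inside the 4-tuple rather than appearing as $r_5^{-1}$.

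First, by Lemma \ref{eqn 5 thm} the five reflections $r_1, r_2, r_3, r_4, r_5^{-1}$ consist of four of Type 1 and one of Type 2. Since every Type 1 reflection lies in $G(m, m, 3) \subsetneq G(m, p, 3)$ (as $p \neq m$), the Type 2 reflection cannot be $r_5^{-1}$: otherwise the four generators of the tuple would all be Type 1 and could not generate $G(m, p, 3)$. So the Type 2, call it $T = s(k; b)$ with nontrivial eigenvalue $\mu = \zeta_m^b$, lies inside the tuple and $r_5^{-1}$ is Type 1. Taking determinants of $r_1 r_2 r_3 r_4 = \lambda r_5^{-1}$, and using that Type 1 reflections have determinant $-1$ while $\det(T) = \mu$, yields the key identity $\mu = \lambda^3$. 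Applying Theorem \ref{reflsubgp} case 2 to the tuple (viewed as a nice reflection set $X$) forces $\gcd(b, m) = p$ (so $\mu$ has order $m/p$) together with $\gcd(b, m, \delta(X)) = 1$, which, since $\gcd(b, m) = p$, simplifies to $\gcd(p, \delta(X)) = 1$.

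Next, rearranging the equation into $R_1 R_2 R_3 R_4 = \lambda T^{-1}$, where the $R_i$ are the four Type 1 reflections (the three in the tuple together with $r_5$), places us exactly in the setting of Remark \ref{trans 4 fac of e} with $\zeta = \mu^{-1}$: the right side is a diagonal matrix with entries $\lambda$ everywhere except for a single entry $\lambda \mu^{-1}$ in the coordinate where $T$ sits. So up to simultaneous conjugation, the transposition pattern of the $R_i$ matches one of the four factorizations listed there. For each such factorization and each choice of omitted $R_i$ that leaves the remaining three Type 1 reflections (together with $T$) forming a connected graph with exactly one cycle, I would read off $\delta(X)$ from that graph and identify it, via the diagonal equations in Remark \ref{trans 4 fac of e}, with either a $\lambda$-entry or a $\lambda\mu^{-1}$-entry. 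Substituting $\mu = \lambda^3$ makes the resulting relation $\lambda^a = \zeta_m^{\pm \delta(X)}$ with $a = 1$ (from a $\lambda$-entry) or $a = 2$ (from $\lambda\mu^{-1} = \lambda^{-2}$, taking inverses). Setting $g = \gcd(\delta(X), m)$, the order of $\lambda^a$ is $m/g$; combining this with the constraint that $\lambda^3$ has order $m/p$, a short case analysis on the parity of the order of $\lambda$ forces $g$ to be one of $p$, $p/3$, $2p$, or $2p/3$. The coprimality condition $\gcd(p, g) = 1$ rules out $g \in \{p, 2p\}$ immediately, while the options $g = p/3$ and $g = 2p/3$ each require $3 \mid p$ and yield $\gcd(p, g) = p/3$, which equals $1$ only when $p = 3$. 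In the surviving case $\lambda$ has order $m$.

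For the converse, when $p = 3$ (so $3 \mid m$ and $m > 3$) I would exhibit the explicit nice 4-tuple $[s(3; 3), s(1, 2; 1), s(1, 2; 0), s(2, 3; 2)]$ in $G(m, 3, 3)$. Here $T = s(3; 3)$ has $\mu = \zeta_m^3$ of order $m/3$; the graph of the three Type 1 reflections has a double edge between vertices $1$ and $2$ giving $\delta(X) = 1$; and Theorem \ref{reflsubgp} case 2 (with $\gcd(b, m, \delta(X)) = \gcd(3, m, 1) = 1$ and $\gcd(b, m) = 3 = p$) confirms that the tuple generates $G(m, 3, 3)$. A direct computation of the product yields $\lambda = \zeta_m$, of order $m$. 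The main obstacle is the bookkeeping in the case analysis above -- four factorization patterns, each with several choices of omitted Type 1 reflection, and two possible values of $a$ -- but in every case the same three ingredients ($\mu = \lambda^3$, the order constraint on $\mu$, and $\gcd(p, \delta(X)) = 1$) collapse the possibilities to $p = 3$.
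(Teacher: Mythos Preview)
Your proposal is correct and follows essentially the same strategy as the paper: use the determinant relation to tie the Type~2 eigenvalue to $\lambda^3$, invoke Lemma~\ref{delX lemma} (which you re-derive via Remark~\ref{trans 4 fac of e}) to identify $\zeta_m^{\delta(X)}$ with $\lambda^{\pm 1}$ or $\lambda^{\pm 2}$, and then apply the generation criterion of Theorem~\ref{reflsubgp} to force $p=3$ and $\operatorname{order}(\lambda)=m$. The only cosmetic differences are that the paper first bounds $d=\operatorname{order}(\lambda)$ via $\lambda I\in ZG(m,p,3)$ and then splits into the two cases $d=q$ versus $d=3q$, whereas you bypass the center and run the case analysis directly on $g=\gcd(\delta(X),m)$; your explicit converse tuple also differs from the paper's, but both check out.
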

	\begin{proof}
		Let $[r_2, r_3, r_4, r_5]$ be a nice 4-tuple in $G(m, p, 3)$. Letting $r_5$ denote the unique reflection of type~\ref{type2}, we know it must have order $q = p^{-1}m$, so we can denote its nontrivial eigenvalue as $\zeta = \zeta_q^b = \zeta_m^{pb}$ with $\gcd(b, m) = 1$. 
		Further, we have the product $r_2r_3r_4r_5 = \lambda r_1^{-1}$ with $r_1 \in G(m, m, 3) \leq G(m, p, 3)$; hence $\lambda I$ is in the center $ZG(m,p , 3)$. Letting $d = \Order(\lambda)$, we see that since $|ZG(m, p, 3)| = q \gcd(p, 3)$, we must have that $d \mid q \gcd(p, 3)$. Taking determinants, we get $\lambda^3 = \zeta^{-1}$ and hence $q \mid d$ and $d \mid 3q$.  If $d \nmid q$, then since $d \neq 3$ (because $\zeta^{-1} \neq 1$) and $d \mid q \gcd(p, 3)$,  we must have that $\gcd(p, 3) = 3$.
		This gives us two cases: 
		\begin{enumerate}
			\item $d = q$, 
			\item $d = 3q$ and $\gcd(p, 3) = 3$.
		\end{enumerate}
		Recall that by Theorem~\ref{reflsubgp}, for a reflection set $X$ to generate $G(m, p, 3)$, we need $\gcd(pb, m) = p$ and $\gcd(pb, \delta(X), m) = 1$.  
		
		In the first case $d  = q$, so we can write $\lambda = \zeta_q^a = \zeta_m^pa$ and $\zeta = \zeta_q^b = \zeta_m^{pb}$ with $(b, q)= 1 = (a, q)$. By Lemma~\ref{delX lemma}, we see that $$\gcd(\delta(X), m) = \begin{cases}
			\gcd(pa, m), \\
			\gcd((a+b)p, m).
		\end{cases} $$
		Since neither of these satisfies $\gcd(pb, \delta(X), m) = 1$, it is not possible for a nice 4-tuple in $G(m, p, 3)$ to have $d = q$. 
		
		Now suppose that $p = 3k$, so $m = 3kq$. Again we can write $\zeta = \zeta_q^b = \zeta_m^{pb}$ with $\gcd(b, q) = 1$, and this time since $d= 3q$, we write $\lambda = \zeta_m^{ka}$ with $\gcd(a, m) = 1$. In this case the possible options for $\delta(X)$ give us
		$$ \gcd(\delta(X), m) = \begin{cases}
			\gcd(ka, m), \\
			\gcd(ka + pb, m).
		\end{cases} $$
		In either case we get $\gcd(pb, \delta(X), m) = k$, so we conclude $k = 1$. This also gives us that $\lambda$ must be of order $m$. 
		Thus, we conclude nice 4-tuples can only exist in $G(m, 3, 3)$. In this case we can write $\lambda = \zeta_m^a$ with $(a, m) = 1$, and we still have $\zeta = \zeta_m^{3b}$ with $\gcd(b, m) = 1$ for $q = p^{-1}m$. We can take our 4-tuple to be $[s(1, 2; 0), s(2, 3; 1), s(2, 3; -1), s(3; m-3)]$, noting that since $3 \mid m$ we also have that $\gcd(m-3, m) =3$. Here $\delta(X) = 2$, so $\gcd(m-3, 2, m) = 1$, and we conclude this tuple is indeed nice.  
	\end{proof}
	
	\begin{remark}
		Note that in all three cases described above, we will have to take middle convolution with parameter $\lambda^{-1}$ since our equations use the tuple product and not the inverse product.
	\end{remark}

	\subsection{Nice 4-tuples in $\boldsymbol{G(m, p, 4)}$} \label{4-tups in imprim4}
	
	We are seeking nice tuples satisfying $$r_1r_2r_3r_4 = A$$ with $r_i$ a reflection in $G(m, p,4)$ and $A$ has a nontrivial eigenvalue of multiplicity 2. 		Again the only possibilities are $p = 1$ and $p = m$ since the group must be generated by four reflections.
        
	\begin{theorem}
		In $G = G(m, 1, 4)$ there are no nice tuples of four reflections.
	\end{theorem}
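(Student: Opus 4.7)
The plan is to argue via the homomorphism $\vphi: G(m, 1, 4) \longrightarrow S_4$ and show that the product of any candidate nice 4-tuple must have a characteristic polynomial with four distinct roots, contradicting the required multiplicity 2 eigenvalue.

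First I would use Theorem \ref{GenSet} to narrow down the composition of the tuple. Since $\inn{r_1, r_2, r_3, r_4} = G(m,1,4)$ and this group requires a minimal generating set of exactly $n=4$ reflections (three of Type 1 and one of Type 2 of order $m$), the tuple must have exactly this composition. Next, by Theorem \ref{reflsubgp}(3), for the induced graph $\Gamma_X$ to give $\inn{X} = G(m, 1, 4)$ we need $\Gamma_X$ to be a connected tree on $n_1 = 4$ nodes and the Type 2 reflection to have order $m$. Since trees have no repeated edges, the three Type 1 reflections must have three \emph{distinct} transpositions forming a tree on $\{1,2,3,4\}$ (either a path or a star).

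Now let $A = r_1 r_2 r_3 r_4$ and consider $\vphi(A) \in S_4$. Since $\vphi$ sends the Type 2 reflection to $e$ and the Type 1 reflections to their respective transpositions, $\vphi(A)$ is a product of three distinct transpositions forming a tree on $\{1,2,3,4\}$, in some order depending on where the Type 2 reflection sits in the tuple. The key geometric claim, which I would verify by a short case analysis on the two tree shapes and the orderings, is that any such product is a 4-cycle in $S_4$. The intuition: the product is odd (hence a transposition or a 4-cycle), but since every vertex in the tree is incident to at least one edge and the tree has no redundancy, the product cannot fix any index, ruling out the transposition case.

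With $\vphi(A)$ a 4-cycle, $A$ is a generalized permutation matrix whose cycle structure consists of a single 4-cycle. Writing $A = [a_1, a_2, a_3, a_4 \mid (j_1 j_2 j_3 j_4)]$, a direct computation (expanding $\det(xI - A)$ along any column) gives the characteristic polynomial
\[
p_A(x) = x^4 - \zeta_m^{a_1 + a_2 + a_3 + a_4}.
\]
This is $x^4 - c$ for a nonzero root of unity $c$, whose four roots are the four distinct fourth roots of $c$. Hence $A$ has no eigenvalue of multiplicity $2$, and so neither does $A^{-1}$, contradicting the third defining condition of a nice tuple in Definition \ref{nice tup}. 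The main obstacle is really just the combinatorial step verifying that a product of three distinct transpositions on a spanning tree of $\{1,2,3,4\}$ is always a 4-cycle; everything else is a direct computation using the theorems already established in Sections \ref{mc} and \ref{imprim}.
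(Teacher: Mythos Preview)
Your proposal is correct and follows essentially the same route as the paper: pin down the tuple as three Type~1 reflections plus one Type~2 reflection, argue that $\vphi(A)$ is a 4-cycle, compute the characteristic polynomial as $x^4 - \zeta_m^{a_1+a_2+a_3+a_4}$, and conclude. The only cosmetic difference is in the 4-cycle step: the paper argues directly that the three transpositions must be distinct and cover all of $\{1,2,3,4\}$ (since they generate a transitive subgroup, hence $S_4$), so their product is odd and cannot be a transposition; you instead invoke the tree structure via Theorem~\ref{reflsubgp}(3) and plan a case check on the two tree shapes. One small caution: Theorem~\ref{reflsubgp}(3) is stated as ``if the graph is a tree then $\langle X\rangle = \ldots$'', not the converse you need; the cleaner justification is simply that three edges on four vertices that are connected must form a tree, which is the fact the paper actually uses (and later states explicitly in Section~\ref{larger n}).
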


        \begin{proof}
		Suppose we had $r_1r_2r_3r_4 = A$ such that the $r_i$ generate $G$ and $A$ has an eigenvalue $\lambda \neq 1 $ of multiplicity~2. Since the $r_i$ generate $G$, by Theorem~\ref{GenSet} we know exactly three of them are of type~\ref{type1} and exactly one is of type~\ref{type2} with nontrivial eigenvalue an $\supth{m}$ root of unity, say $\zeta_m'$. Note that since the three type~\ref{type1} reflections should correspond to transpositions that generate all of $S_4$, they must be three distinct transpositions with each of $\{1, 2, 3, 4\}$ showing up at least once. This means their product cannot be a transposition. Using $\vphi\colon G \rightarrow S_4$, we get that $\vphi(A) = \sigma$ for some 4-cycle $\sigma$ since the permutation must be odd and cannot be a transposition. 
		Writing $A = [a_1, a_2, a_3, a_4 \mid \sigma]$, we can compute that the characteristic polynomial of $A$ must be
                $$\Char_A(x) = x^4 - \zeta_m^{a_1+ a_2 + a_3 + a_4}.$$
		This gives us that all the eigenvalues of $A$ are the distinct $\supth{4}$ roots of $\zeta_m^{a_1 + a_2 + a_3 + a_4}$, which means $A$ cannot have an eigenvalue of multiplicity 2. 
	\end{proof}
	
	In the case of $G(m, m, 4)$, we know that for any nice 4-tuple,  all the reflections must be of type~\ref{type1}. This means they all map to transpositions in $S_4$, and hence the permutation associated to $A$ must be even. In $S_4$ the only options are $e$, the disjoint product of transpositions, and a $3$-cycle. We would need at least six transpositions to write $e$ as the transitive product of transpositions, so we only have two choices. Without loss of generality, if $A$ corresponds to disjoint transpositions, we take it to be $(12)(34)$, and if $A$ corresponds to a 3-cycle, we take it to be $(132)$.
	
	\begin{remark} \label{4-tup 2 trans}
		
		The number of transitive ordered factorizations of $(12)(34)$ into four transpositions in $S_4$ is 96; see \cite{transfac}. Using a computer, we find that there are 12 factorizations up to simultaneous conjugation. We can reduce these 12 even further by eliminating factorizations that are related by the action of one of the generators of $B_4$, since this means they lie in the same braid group orbit. 
		These preliminary computations give us three equivalence classes of factorizations (which may still lie in the same braid group orbit), which we write as follows with representatives: 
		\begin{enumerate}
			\item 	$[(12), (34), (13), (13)]$,
			\item 	$[(13), (13), (12), (34)]$,
			\item 	$[(13), (24), (23), (14)]$.
		\end{enumerate}
		Choosing general reflections in $G(m, m, 4)$ associated to these factorization and computing the products, we get
		\begin{enumerate}
			\item $s(1,2; b_1) \cdot s(3, 4; b_2) \cdot s(1, 3; b_3) \cdot s(1, 3; b_4) = \begin{pmatrix}
				0 & \zeta_m^{b_1} & 0 & 0 \\
				\zeta_m^{-b_1+b_3-b_4}& 0 & 0 & 0\\
				0 & 0 &0 &\zeta_m^{b_2}\\
				0 & 0 & \zeta_m^{-b_2-b_3+b_4} & 0
			\end{pmatrix}$,
			\item $s(1,3; b_1) \cdot s(1,3; b_2) \cdot s(1, 2; b_3) \cdot s(3, 4; b_4) = \begin{pmatrix}
				0&\zeta_m^{b_1-b_2+b_3} & 0&0 \\
				\zeta_m^{-b_3} & 0 & 0 & 0 \\
				0&0&0& \zeta_m^{-b_1+b_2+b_4} \\
				0&0&\zeta_m^{-b_4}& 0
			\end{pmatrix}$,
			\item $s(1, 3; b_1) \cdot s(2,4; b_2) \cdot s(2, 3; b_3) \cdot s(1, 4; b_4) = \begin{pmatrix}
				0& \zeta_m^{b_1-b_3} & 0 & 0 \\
				\zeta_m^{b_2-b_4} & 0&0&0 \\
				0&0&0&\zeta_m^{-b_1+b_4} \\
				0&0& \zeta_m^{-b_2 +b_3} &0
			\end{pmatrix}$.
		\end{enumerate}
	\end{remark}
	
	\begin{remark} \label{4-tup 3-cyc}
		Similarly, we can consider transitive ordered factorizations of $(132)$ into four transpositions in $S_4$. Up to simultaneous conjugation and the action of a generator of $B_4$, we again get three equivalence classes of factorizations (which may not be distinct): 
		\begin{enumerate}
			\item $[(14), (14), (12), (23)]$,
			\item $[(14), (14), (13), (12)]$,
			\item $[(14), (14), (23), (13)]$.
		\end{enumerate}
		These factorizations give us the following matrices: 
		\begin{enumerate}
			\item $s(1, 4; b_1) \cdot s(1, 4; b_2) \cdot s(1, 2; b_3) \cdot s(2, 3; b_4) = \begin{pmatrix}
				0&0&\zeta_m^{b_1-b_2+b_3+b_4}& 0 \\
				\zeta_m^{-b_3} &0 &0&0 \\
				0& \zeta_m^{-b_4} &0&0 \\
				0&0&0&\zeta_m^{-b_1 +b_2}
			\end{pmatrix}$, 
			\item $s(1, 4; b_1) \cdot s(1, 4; b_2) \cdot s(1, 3; b_3) \cdot s(1,2; b_4) = \begin{pmatrix}
				0&0&\zeta_m^{b_1 -b_2+b_3} &0 \\
				\zeta_m^{-b_4} &0&0&0 \\
				0& \zeta_m^{-b_3 +b_4} &0&0 \\
				0&0&0& \zeta_m^{-b_1+b_2}
			\end{pmatrix}$,
			\item $s(1, 4; b_1) \cdot s(1, 4; b_2) \cdot s(2, 3; b_3) \cdot s(1, 3; b_4) = \begin{pmatrix}
				0&0& \zeta_m^{b_1-b_2+b_4} &0 \\
				\zeta_m^{b_3 -b_4} &0 &0 &0 \\
				0& \zeta_m^{-b_3} & 0& 0 \\
				0&0&0& \zeta_m^{-b_1 +b_2}
			\end{pmatrix}$.
		\end{enumerate}
	\end{remark}
        
		\begin{lemma} \label{4-tup mm eigv}
			In $G = G(m, m, 4)$, if we have $r_1r_2r_3r_4 = A$ with the $r_i$ reflections generating $G$ and if $A$ has a nontrivial eigenvalue $\lambda$ of multiplicity $2$, then the eigenvalues of $A$ are one of the following: 
			\begin{enumerate}
				\item\label{case1} $\{\lambda, \lambda, -\lambda, -\lambda\}$ for $\lambda$ a $\supth{4}$ root of unity, 
				\item\label{case2} $\{\lambda, \lambda, \lambda_1, \lambda_2\}$ for $\lambda$ a $\supth{4}$ root of unity and $\lambda_i \neq \lambda$ the two distinct cube roots of $\lambda^{-1}$ other than $\lambda$.  
			\end{enumerate}
		\end{lemma}
                
		\begin{proof}
			We know all the $r_i$ are of type~\ref{type1}. Hence using $\vphi\colon G \rightarrow S_4$, we know that $\vphi(A)$ must be an even permutation, so it is either a double transposition or a 3-cycle. Let the eigenvalues of $A$ be $\{\lambda, \lambda, \lambda_1, \lambda_2\}$. 
			If $\vphi(A)$ is a double transposition, then we see that 
			\begin{align*}
				0 &= \tr(A) = 2\lambda + \lambda_1 + \lambda_2, \\
				1 &= \det(A) = \lambda^2 \lambda_1 \lambda_2. 
			\end{align*}
			Rearranging the equation for trace gives us that $$1 = |\lambda| = \frac{| \lambda_1 + \lambda_2|}{2} = \frac{|\lambda_1| + |\lambda_2|}{2} = 1.$$
			By properties of the triangle inequality, we get that $\lambda_1$ and $\lambda_2$ are colinear; hence $\lambda_1 = \pm \lambda_2$. However, if $\lambda_1 = -\lambda_2$, then the trace equation means $\lambda = 0$, which is not possible; hence $\lambda_1 = \lambda_2$. Now the trace equation tells us that $2(\lambda + \lambda_1) = 0$; hence $\lambda_1 = -\lambda$. 
			Next, we can use the determinant equation to see that $1 = \lambda^4$, and so $\lambda$ must be a $\supth{4}$ root of unity. This gives us that the eigenvalues of $A$ are as described in case~\eqref{case1}. 
			
			Further, when $\vphi(A)$ is a double transposition, say $A = [a_1, a_2, a_3, a_4 \mid (12)(34)]$, the characteristic polynomial is $\Char_A(x) = (x^2 -\zeta_m^{a_1 + a_2})(x^2 - \zeta_m^{a_3 + a_4})$. However, since we know the roots of these polynomials agree, we see that $\zeta_m^{a_1 + a_2} = \zeta_m^{a_3 + a_4}$, but we already knew that $\zeta_m^{a_1 + a_2} = \zeta_m^{-a_3 - a_4}$ since $m \mid a_1 + a_2 + a_3 + a_4$. Thus we conclude that $\zeta_m^{a_1 + a_2} = \pm 1$ and so $\zeta_m^{a_1} = \pm \zeta_m^{-a_2}$ and, similarly, $\zeta_m^{a_3} = \pm \zeta_m^{-a_4}$, where the signs must agree for both pairs. 

			If $\vphi(A)$ is a 3-cycle, we write $A = [a_1, a_2, a_3, a_4 \mid (132)]$, since taking a different 3-cycle just changes the indices. The determinant condition stays the same as above; however, our equation for the trace is now 
			$$ \zeta_m^{a_4} = \tr(A) = 2\lambda + \lambda_1 + \lambda_2.$$
			Computing the characteristic polynomial, we get $\Char_A(x) = (x-\zeta_m^{a_4})(x^3 - \zeta_m^{a_1+a_2+a_3})$, and since the cube roots of $\zeta_m^{a_1 + a_2 + a_3}$ are all distinct, we know that for $\lambda$ to have multiplicity~2, it must be one of the cube roots and $\lambda = \zeta_m^{a_4}$. Since $\zeta_m^{a_1 + a_2 + a_3} = \zeta_m^{-a_4} = \lambda^{-1}$, we see that $\lambda$, $\lambda_1$, and $\lambda_2$ must be the three distinct cube roots of $\lambda^{-1}$. Now we have that $\lambda^3 = \lambda^{-1}$ and hence $\lambda^4 = 1$, proving that the eigenvalues of $A$ are as described in case~\eqref{case2}.
		\end{proof}
	
	\begin{theorem} \label{4-tup mm}
		Nice $4$-tuples exist in $G(m, m, 4)$ if and only if  $m = 2$ or $m = 4$. In either case $\lambda$ is of order $m$. 
	\end{theorem}
	\begin{proof}
		Say we had a nice tuple in $G(m, m, 4)$ such that $r_1r_2r_3r_4 = A$. 
		 First suppose  that $A$ corresponds to a 3-cycle; then our nice 4-tuple is equivalent to one of the factorizations in Remark~\ref{4-tup 3-cyc}.  For all three factorizations, by Lemma~\ref{4-tup mm eigv} we know that $\zeta_m^{-b_1 +b_2} = \lambda$, and by Theorem~\ref{reflsubgp} we know that $\delta(X) = |b_1 - b_2|$ must be coprime to $m$. This means $\lambda$ is a primitive $\supth{m}$ root, and by Lemma~\ref{4-tup mm eigv} we know $m = 4$ or $m = 2$ in this case. 
		
		If $A$ corresponds to a double transposition, then our nice 4-tuple is equivalent to one of the factorizations in Remark~\ref{4-tup 2 trans} and we may write $A = [a_1, a_2, a_3, a_4 \mid (12)(34)]$. From the first factorization and the proof of Lemma~\ref{4-tup mm eigv}, we see that $\pm 1 = \zeta_m^{a_1 + a_2} = \zeta_m^{b_3 -b_4} = \lambda^2$. In this case $\delta(X) = |b_3 -b_4|$, which we know must be coprime to $m$ by Theorem~\ref{reflsubgp}; we get that $\lambda^2$ is a primitive $\supth{m}$ root, so $m = 2$. The same argument will also give that $m$ must be 2 if our nice tuple is equivalent to the second factorization in Remark~\ref{4-tup 2 trans}. For the third possible factorization, we see that the cycle has four edges, so $\delta(X) = |b_1 -b_3 +b_2 -b_4|$ since the cycle corresponds to the permutation $(1324)$. But again we have that $\pm 1 = \zeta_m^{a_1 + a_2} = \zeta_m^{b_1 + -b_3 + b_2 -b_4}$, so $\lambda^2$ is a primitive $\supth{m}$ root, so $m = 2$. 
		If $m = 4$, then we can take $r_1 = s(1, 4; -1)$, $r_2 = s(1, 4; 0)$, $r_3 = s(1, 2; -1)$, $r_4 = s(2, 3; 0)$. Then $\delta(X) = 1$, so these reflections generate $G(4, 4, 4)$. Further, we have
                $$A = \begin{pmatrix}
			0 & 0 & -1 & 0 \\
			i & 0 & 0 & 0 \\
			0 & 1 & 0 & 0 \\
			0 & 0 & 0 & i
		\end{pmatrix}, $$
                which has eigenvalue $i$ with multiplicity 2.	
		If $m = 2$, then we can take $r_1 = s(1, 4; 1)$, $r_2 = s(1, 4; 0)$, $r_3 = s(1, 2; 0)$, $r_4 = s(2, 3; 0)$, so again $\delta(X)$ equals $1$
                and generates $G(2, 2, 4)$. In this case
                $$ A = \begin{pmatrix}
			0 & 0 & -1 & 0 \\
			1 & 0 & 0 & 0 \\
			0 & 1 & 0 & 0 \\
			0 & 0 & 0 & -1
		\end{pmatrix}, $$
                which has eigenvalue $-1$ with multiplicity 2.
	\end{proof}
	Note that when taking the middle convolution for such tuples, the parameter should be $\lambda^{-1}$ since these equations are for the tuple product and not the inverse product.
	
		\subsection{Nice 5-tuples in $\boldsymbol{G(m, p, 4)}$}\label{5-tups in imprim4}
	Searching for 5-tuples $G(m, p, 4)$ so that the middle convolution will be of rank 2 means we want the product to have an eigenvalue of multiplicity 3. A $4 \times 4$ matrix with an eigenvalue of multiplicity 3 can be viewed as a scalar times a reflection, so we want to explore the existence of $[r_1, r_2, r_3, r_4, r_5]$ such that
	$$ r_1r_2r_3r_4r_5 = \lambda r_6^{-1},$$
        where $ \lambda \neq 1$ and $r_6$ is some reflection, not necessarily in $G(m, p, 4)$. 
	
	\begin{lemma}\label{lem4.23}
		If we have five reflections generating $G = G(m , p, 4)$ such that 
		\begin{align}
			r_1r_2r_3r_4r_5 = \lambda r_6^{-1},
		\end{align} 
		then it must be that
		\begin{enumerate}
			\item  $r_6 \in G$ and $G = G(m, m, 4)$ with $\Order(\lambda) \mid m$,  
			\item $\lambda^4 = 1$.
		\end{enumerate}
	\end{lemma}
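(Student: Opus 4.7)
The plan proceeds in four steps. First, I would use Theorem \ref{ReflEigvThm} to show $\on{ord}(\lambda) \mid m$. The degrees of $G(m,p,4)$ are $\{m, 2m, 3m, 4q\}$ with $q = m/p$, and since $\lambda$ has multiplicity $3$ in $r_1 \cdots r_5$, its order $d$ divides at least three of these degrees. Any three-element subset of $\{m, 2m, 3m, 4q\}$ contains at least two elements from $\{m, 2m, 3m\}$, whose pairwise gcd is $m$, forcing $d \mid m$. Consequently $\lambda I$ has order dividing $|ZG(m,1,4)| = m$, so $\lambda I \in G(m,1,4)$, and rearranging gives $r_6 = \lambda(r_1\cdots r_5)^{-1} \in G(m,1,4)$ as a reflection, either Type 1 or Type 2.

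Next, I would show that every $r_i$ for $i = 1, \ldots, 6$ is Type 1. Rewriting the equation as $r_1 r_2 r_3 r_4 r_5 r_6 = \lambda I$ and applying the homomorphism $\vphi: G(m,1,4) \to S_4$, the right-hand side maps to the identity permutation. Since Type 2 reflections map to the identity in $S_4$ and Type 1 reflections to transpositions, the number $T$ of Type 1 reflections among the six is even. Irreducibility of $G$ demands that the transpositions coming from the Type 1 reflections in the 5-tuple generate a transitive subgroup of $S_4$, so at least three such transpositions are present; combined with $T$ even and $T \leq 6$, this leaves only $T \in \{4, 6\}$. The case $T = 6$ is the desired conclusion, so the core of the argument is eliminating $T = 4$: in each of its two subcases (either $r_6$ Type 1 together with two Type 2 reflections in the 5-tuple, or $r_6$ Type 2 together with one Type 2 in the 5-tuple), I would combine the eigenvalue structure of $\lambda r_6^{-1}$ with the trace of $r_1\cdots r_5$ viewed as a sum of diagonal entries of a generalized permutation matrix whose permutation is determined by $\vphi(r_6^{-1})$, and the multi-determinant map $\tau: G(m,1,4) \to \Z/m\Z$ sending $[a_1, \ldots, a_4 \mid \sigma] \mapsto \sum a_i \pmod m$, to derive a contradiction with the requirement that the Type 2 reflections in the 5-tuple actually lie in $G = G(m,p,4)$, i.e.\ have parameters divisible by $p$.

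Once $T = 6$ is established, every $r_i$ is a Type 1 reflection, hence lies in $G(m,m,4)$. Therefore $G = \langle r_1, \ldots, r_5 \rangle \subseteq G(m,m,4)$; combined with the inclusion $G(m,m,4) \subseteq G(m,p,4) = G$, this forces $p = m$ and $r_6 \in G$. Finally, taking the determinant of $r_1 r_2 r_3 r_4 r_5 r_6 = \lambda I$ yields $\lambda^4 = \prod_{i=1}^{6}\det(r_i) = (-1)^6 = 1$, completing the proof.

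The main obstacle is clearly the subcase analysis ruling out $T = 4$: one must show that mixing Type 1 and Type 2 reflections in the 5-tuple is incompatible with the product being $\lambda$ times a reflection with $\on{ord}(\lambda) \mid m$ while still generating all of $G(m,p,4)$ for some $p < m$. The delicate point is that the Type 2 reflections contribute nontrivially to the diagonal part of $r_1 \cdots r_5$, and tracking this contribution against the prescribed eigenvalue multiplicity of $\lambda$ and the parameter-divisibility conditions imposed by membership in $G(m,p,4)$ is where the argument becomes case-heavy.
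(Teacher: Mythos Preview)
Your outline matches the paper's proof almost exactly in steps 1, 3, and 4: showing $\on{ord}(\lambda)\mid m$ via the degree argument, deducing $G=G(m,m,4)$ once all six $r_i$ are Type~1, and reading off $\lambda^4=1$ from the determinant. The divergence is entirely in how you handle the case $T=4$.

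The paper disposes of $T=4$ with a short, purely permutation-theoretic observation that you missed. If exactly four of the six $\vphi(r_i)$ are transpositions, then since the remaining two are the identity and the full product in $S_4$ is $e$, the four transpositions themselves multiply to $e$. On the other hand, the transpositions coming from the 5-tuple (at least three of the four) must generate a transitive subgroup of $S_4$, forcing at least three of them to be distinct. A brief hand check in $S_4$ shows that four transpositions with at least three distinct can never multiply to the identity: the product of two transpositions is $e$, a 3-cycle, or a disjoint product $(ab)(cd)$, and matching this with the inverse of the other pair always collapses the support to at most two orbits. That is the entire argument.

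Your proposed route through the eigenvalue structure of $\lambda r_6^{-1}$, the trace of $r_1\cdots r_5$, and the map $\tau$ to $\Z/m\Z$ is not wrong in spirit, but it is only sketched and you yourself flag it as the ``main obstacle'' and ``case-heavy.'' It is also not clear from your description how the $p$-divisibility constraint alone yields a contradiction in both subcases without further input. Since the $S_4$ argument above is a two-line replacement, you should use it instead; the rest of your proof then goes through exactly as in the paper.
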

	
	\begin{proof}
	  First note that  $G(m, m, 4) \leq G(m, p, 4) \leq G(m, 1, 4)$; hence $r_i \in G(m, 1, 4)$ for $i < 6$ and therefore $\lambda r_6^{-1} \in G(m, 1, 4)$. Since this matrix has an eigenvalue of multiplicity 3, we know $d = \Order(\lambda)$ divides three of the degrees in $\deg G(m, 1, 4) = \{m, 2m , 3m, 4m\}$. This tells us that $d \mid m$ as if it did not, then $d \mid 3m -2m = m$, which would give a contradiction. Since $\lambda$ has order dividing $m$, $\lambda I$ is in the center of $G(m, 1, 4)$, and thus we can conclude $r_6 \in G(m, 1, 4)$, though $r_6$ is not yet necessarily in $G(m, p, 4)$. This changes our problem into studying six reflections in $G(m, 1, 4)$ satisfying
		$$r_1r_2r_3r_4r_5r_6 = \lambda I.$$
		
		Next we use the homomorphism $\vphi\colon G(m, 1, 4) \rightarrow S_4$. By Theorem~\ref{imprim homo}, we know $ \vphi(r_1r_2r_3r_4r_5r_6) = \vphi(\lambda I) = e$, and since $e$ is even, we know an even number of $\vphi(r_i)$ must be transpositions.
		
		To conclude all six are transpositions, note that $\{r_i\}_{i < 6}$ are supposed to generate $G(m, p, 4)$, which means $\{\vphi(r_i)\}_{i<6}$ must generate a transitive subgroup of $S_4$, hence $S_4$ itself. This removes the possibility of just two transpositions since we need at least three distinct transpositions to generate $S_4$.
		
		While it is possible to generate $S_4$ with four transpositions, our equation also requires that their product is the identity, which is not possible with at least three distinct transpositions (as can be verified by hand). 
		
		Thus we know all six reflections must be of type~\ref{type1}, and so $\{r_i\}_{i < 6}$ can only generate $G(m , m, 4)$; we conclude $G = G(m , m ,4)$.
		
		Using that all reflections of type~\ref{type1} have nontrivial eigenvalue $-1$, taking the determinant gives us that
$$ 1 = (-1)^ 6 = \det(r_1r_2r_3r_4r_5r_6) =  \det(\lambda I) = \lambda^4, $$
so we know $\lambda$ must be a $\supth{4}$ root of unity. Further, using that $\Order(\lambda) \in \{2, 4 \}$ must divide at least three degrees in $\deg G(m, m , 4) = \{m, 2m, 3m, 4\}$, we conclude that $\Order(\lambda) \mid m$. 
	\end{proof}
	
	\begin{remark} \label{5-tup mm}
		To study such tuples in $G(m, m, 4)$, we first found the ways to decompose $e$ in $S_4$ into a transitive product of six transpositions up to conjugacy. To make such a list, note that we can assume the first two transpositions are not disjoint, as if they are we can swap them around. This tells us their product is a 3-cycle, and up to conjugation we may assume it is $(132)$. We then found all the 4-tuples of transpositions that multiply to $(123)$ up to simultaneous conjugation and generate all of $S_4$. Adjoining these options to the $(12)$, $(13)$ on the left, we get 27 options for transitive factorizations of the identity, and using computer code to partially compute the braid group orbit, we were able to reduce the list further to two options (not necessarily distinct). 
		The tuples in $S_4$ are
		\begin{enumerate}
			\item $[(12), (13), (12), (23), (24), (24)]$,
			\item $[(12), (13), (24), (24), (12), (23)]$.
		\end{enumerate}
		Using the type~\ref{type1} notation for the reflections in $G(m, m, 4)$ as $r_i =s(j_i, k_i; b_i)$, we can take the matrix product. The first tuple gives us the equation
		\begin{align*}
			\begin{pmatrix}
				\zeta_m^{b_1 -b_3} & 0 & 0 & 0 \\
				0 & \zeta_m^{-b_1 +b_2-b_4 +b_5-b_6} & 0 & 0 \\
				0 & 0 & \zeta_m^{-b_2 +b_3 +b_4} & 0 \\
				0 & 0 & 0 & \zeta_m^{-b_5 +b_6}
			\end{pmatrix} &= \begin{pmatrix}
				\lambda & 0 & 0 & 0 \\
				0 & \lambda & 0 & 0 \\
				0& 0 & \lambda & 0 \\
				0 & 0 & 0 &\lambda
			\end{pmatrix}, 
		\end{align*}
		and the second tuple gives us the equation 
		\begin{align*}
			\begin{pmatrix}
				\zeta_m^{b_1 +b_3 -b_4 -b_5} & 0 & 0 & 0 \\
				0 & \zeta_m^{-b_1 +b_2-b_6} & 0 & 0 \\
				0 & 0 & \zeta_m^{-b_2 +b_5 +b_6} & 0 \\
				0 & 0 & 0 & \zeta_m^{-b_3 +b_4}
			\end{pmatrix} &= \begin{pmatrix}
				\lambda & 0 & 0 & 0 \\
				0 & \lambda & 0 & 0 \\
				0& 0 & \lambda & 0 \\
				0 & 0 & 0 &\lambda
			\end{pmatrix}.
		\end{align*}
	\end{remark}

	\begin{theorem}
		Nice $5$-tuples exist in $G(m, m, 4)$ if and only if $m = 2$ or $m = 4$, and in either case $\lambda$ is of order~$m$. 
	\end{theorem}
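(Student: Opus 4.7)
The plan is to mirror the strategy used for Theorem \ref{4-tup mm}. The preceding lemma already reduces the problem to $G(m, m, 4)$ with $\lambda^4 = 1$ and the order of $\lambda$ dividing $m$, so the remaining task is to use the generating condition on the five reflections to pin down the order of $\lambda$ to exactly $m$, then combine with $\lambda^4 = 1$ and $\lambda \neq 1$ to force $m \in \{2, 4\}$.

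For the ``only if'' direction, I would begin with a nice 5-tuple $[r_1, \dots, r_5]$ in $G(m, m, 4)$. By Remark \ref{5-tup mm}, any such tuple is braid-equivalent to one coming from one of the two listed factorizations of the identity (extended by $r_6$) in $S_4$, and since middle convolution is braid-equivariant it suffices to treat these two cases. Because $G(m, m, 4)$ has rank $4$, the generating 5-tuple must contain a 4-subset $X$ that already generates $G(m, m, 4)$. For each factorization I would enumerate the five possible 4-subsets, discard those whose graph $\Gamma_X$ fails to be connected or fails to touch all four nodes, and for each surviving subset compute $\delta(X)$ from the unique cycle in $\Gamma_X$. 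Substituting the linear equations on the $b_i$ read off from the diagonal entries in Remark \ref{5-tup mm}, each such $\delta(X)$ should simplify modulo $m$ to $\pm a$ or $\pm 2a$, where $\lambda = \zeta_m^a$. Theorem \ref{reflsubgp}(1) then forces $\gcd(\delta(X), m) = 1$ for at least one surviving $X$, which in either case implies $\gcd(a, m) = 1$, so $\lambda$ has order exactly $m$.

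For the ``if'' direction I would exhibit explicit nice 5-tuples in $G(2, 2, 4)$ and in $G(4, 4, 4)$ by choosing simple $b_i$ satisfying the system in Remark \ref{5-tup mm} for the first factorization, with a 4-subset sitting on the double edge so that $\delta = |b_1 - b_3|$ is $1$; Theorem \ref{reflsubgp}(1) then guarantees generation, and direct multiplication confirms the product equals $\lambda r_6^{-1}$ with $\lambda$ of order $m$ and $r_6$ a reflection in $G(m, m, 4)$. For $m = 2$, a choice such as $(b_1,\dots,b_5) = (1,0,0,1,0)$ should work; for $m = 4$, an analogous choice adjusted so $b_1 - b_3 \equiv 1 \pmod 4$ suffices.

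The main obstacle will be the combinatorial bookkeeping in the ``only if'' step: orienting each cycle in $\Gamma_X$ consistently to get the correct signed sum for $\delta(X)$, and verifying that no 4-subset of either factorization produces an exotic $\delta$, say a combination not reducible to $\pm a$ or $\pm 2a$, that could relax the coprimality conclusion. The key structural observation making this manageable is that each of the two factorizations in Remark \ref{5-tup mm} contains a double edge whose $\delta$ is literally $\pm a$ by the first and last equations of the associated system, so at least one generating 4-subset is guaranteed to give $\gcd(a, m) = 1$; the remaining work is just ensuring the enumeration is carried out uniformly across both factorizations.
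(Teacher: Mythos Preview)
Your proposal is correct and follows essentially the same approach as the paper: both reduce via the preceding lemma and Remark \ref{5-tup mm} to the two listed factorizations, enumerate the transitive $4$-subsets $X$, show each $\delta(X)$ is $\pm a$ or $\pm 2a$ (where $\lambda=\zeta_m^a$) using the diagonal equations, invoke Theorem \ref{reflsubgp}(1) to force $\gcd(a,m)=1$, and then exhibit explicit tuples for $m\in\{2,4\}$ (your sketched example for $m=2$ is exactly the paper's). The only minor difference is that the paper works with $4$-subsets of all six reflections and separately argues the $\pm 2a$ option is never coprime to $m$, whereas your observation that $\gcd(2a,m)=1\Rightarrow\gcd(a,m)=1$ shortcuts that case analysis.
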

	\begin{proof}
		Suppose we have a nice 5-tuple in $G(m,m, 4)$, so by Lemma~\ref{lem4.23} we know $2 \mid m$. We know it must be equivalent to one of the two factorizations in Remark~\ref{5-tup mm}. Considering the first factorization and looking at all transitive subsets of four reflections we can choose from the six, we get the following possibilities for~$\delta(X)$: 
		\begin{align*}
			\delta(X) = \begin{cases}
				|b_1 -b_3|, \\
				|b_6 -b_5|, \\
				|b_4 +b_3-b_2|, \\
				|b_4 +b_1 -b_2|.
			\end{cases}
		\end{align*}
		The first three of these are such that $\zeta_m^{\delta(X)}$ equals $\lambda$ up to the absolute value on $\delta(X)$, so if any of those are coprime to $m$, then $\lambda$ is of order $m$. So the only way $m \neq 2, 4$ is if the only option for $\delta(X)$ that is coprime to $m$ is $|b_4 +b_1-b_2|$. 
		Writing $m = 2^sk$ with $(k, 2) =1$ and $s \geq 1$, we can write $\lambda = \zeta_m^{k (s-j)}$ for $j = 1, 2$ depending on the order of $\lambda$. The equations for $b_i$  modulo $m$ then becomes 
		\begin{align*}
			b_1-b_3 \equiv k2^{s-j}, \\
			-b_1 + b_2-b_4+b_5 -b_6 \equiv k2^{s-j}, \\
			-b_2 +b_3 +b_4 \equiv k2^{s-j}, \\
			-b_5 +b_6 \equiv k2^{s-j}.
		\end{align*}
		The second and fourth equations give us $-b_1 +b_2-b_4 \equiv 2 k2^{s-j}$, which is not coprime to $m$. Thus, this option for $\delta(X)$ does not work, so one of the first three options is coprime to $m$ and hence $\lambda$ is of order $m$. Since $\lambda$ is a $\supth{4}$ root of unity, we know $m = 2$ or $m = 4$. 
		
		A similar argument will give the same conclusion for the second factorization. 
		For $m = 4$, consider the tuple $[s(1, 2; 1), s(1, 3; 0), s(1, 2; 0), s(2,3; 1), s(2, 4; 1)]$. Then $\delta(X) = 1$, and so these reflections will generate $G(4, 4, 4)$. The product of this tuple will be $i \cdot s(2, 4; 2)$ as this corresponds to the first factorization.
		For $m = 2$, we can take the tuple $[s(1, 2; 1), s(1, 3; 0), s(1, 2; 0), s(2,3; 1), s(2, 4; 0)]$. Then $\delta(X) = 1$, and so these reflections will generate $G(2, 2, 4)$.  The product of this tuple will be $-1 \cdot s(2, 4; 1)$ as this corresponds to the first factorization.
	\end{proof}
		Note that with the examples above, we will have to take middle convolution with parameter $\lambda^{-1}$ since our equations are for the tuple product and not the inverse product.
	
	\subsection{Nice $\boldsymbol{n}$-tuples in $\boldsymbol{G(m, p, n)}$ for $\boldsymbol{n \geq 5}$} \label{larger n}
	Recall that the only possible groups when nice $n$-tuples can exist are when $p = 1$ and $p = m$ since these are the only well-generated imprimitive groups (see Definition~\ref{rank+wg}).

\begin{fact*}
 A connected graph of $n$ vertices and $n-1$ edges must be a tree.
\end{fact*}	
	
	\begin{claim} The product of\, $n-1$ transpositions is an $n$-cycle if and only if the graph associated to the set of\, $n-1$ transpositions in $S_n$ is a tree, where the edge $(a,b)$ is present if and only if\, $(ab)$ is one of the $n-1$ transpositions. 
	\end{claim}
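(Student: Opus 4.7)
The plan is to prove the two directions separately, bridging them via the standard fact (already invoked in the excerpt around \cite[Lemma 2.13]{ld2009}) that a subgroup of $S_n$ generated by transpositions is transitive if and only if its associated edge graph is connected. For the forward direction, suppose the product $t_1 t_2 \cdots t_{n-1}$ is an $n$-cycle. Then $\langle t_1, \dots, t_{n-1}\rangle$ contains this $n$-cycle, hence acts transitively on $\{1,\dots,n\}$, so the edge graph is connected. A connected graph on $n$ vertices requires at least $n-1$ edges, but we have at most $n-1$, so the $n-1$ transpositions must be pairwise distinct and the graph has exactly $n-1$ edges. The stated fact about connected graphs with $n-1$ edges then yields that the graph is a tree.

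For the reverse direction, I would induct on $n$. The base case $n=2$ is immediate since $(12)$ is already a $2$-cycle. For the inductive step, pick any leaf $v$ of the tree with unique neighbor $w$, so that $(vw)$ is one of our transpositions. Since cyclic rotation of a product $s_1 s_2 \cdots s_k$ to $s_2 \cdots s_k s_1$ is conjugation by $s_1$, the cycle type is unchanged, so I may assume without loss of generality that $(vw)$ sits at the rightmost position. The remaining $n-2$ transpositions do not involve $v$ and form a tree on the vertex set $\{1,\dots,n\}\setminus\{v\}$, so by the inductive hypothesis their product $\tau$ is an $(n-1)$-cycle on this subset that fixes $v$. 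Writing $\tau=(w,a_2,\dots,a_{n-1})$ in cycle notation, a direct computation yields $\tau\cdot (vw)=(v,w,a_2,\dots,a_{n-1})$, an $n$-cycle obtained by splicing $v$ in immediately before $w$, completing the induction.

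The main obstacle is the cyclic-rotation step in the reverse direction: after moving $(vw)$ to the end I must confirm that the remaining transpositions, viewed as an unordered set of edges, still form a tree on $\{1,\dots,n\}\setminus\{v\}$. This reduces to the observation that removing a leaf $v$ and its unique incident edge from a tree on $n$ vertices leaves a tree on $n-1$ vertices, which is standard. A secondary care point lies in the forward direction, where I must ensure that repeated transpositions are ruled out; this is automatic because a graph with fewer than $n-1$ distinct edges on $n$ vertices cannot be connected, contradicting the transitivity of the subgroup generated. Once these two bookkeeping points are handled, both implications follow cleanly.
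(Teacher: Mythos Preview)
Your proof is correct. Both directions are handled cleanly, and the two bookkeeping points you flag (that leaf-deletion preserves tree-ness, and that repeated transpositions are excluded by the edge-count lower bound for connected graphs) are exactly the right things to check.

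Your route differs from the paper's. The paper argues by a single terse induction in which one edge of the tree is removed, splitting the graph into two subtrees on $k$ and $n-k$ vertices; the inductive hypothesis then says the product over each subtree is a $k$-cycle and an $(n-k)$-cycle on disjoint supports, and the removed transposition splices these into an $n$-cycle. You instead separate the two implications: for the forward direction you bypass induction entirely via the transitivity/connectedness correspondence and an edge count, and for the reverse direction you induct by deleting a \emph{leaf} rather than an arbitrary edge. Your forward argument has the advantage of making explicit why the $n-1$ transpositions must be pairwise distinct, a point the paper leaves implicit; your reverse argument avoids having to track two simultaneous inductive pieces. The paper's edge-removal version has the minor virtue of treating both directions in one breath, but at the cost of being considerably sketchier.
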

	\begin{proof}

        We first check the ``only if'' direction: if the product of $n-1$ transpositions is an $n$-cycle, the associated graph must be connected; since it has $n$ vertices and $n-1$ edges, the fact above implies that it is a tree. It remains to prove the ``if'' direction.
        
        We proceed by induction. Assume the result holds for $k < n$. Given a   tree with $n$ vertices, there must exist some vertex $b$ of degree~1. When we remove this vertex $b$ and its only edge $(a_m, b)$, the result is still a tree, so by the induction hypothesis,  the product of the remaining $n-2$ transpositions is an $(n-1)$-cycle $(a_1 \cdots a_m \cdots a_{n-1})$ with $a_i \neq b$ for all $i$. This tells us that we have $(a_1 \cdots a_m \cdots a_{n-1})(a_m b) = (a_1 \cdots a_m b a_{m+1} \cdots a_{n-1})$, which is an $n$-cycle.
	\end{proof}
	
	\begin{theorem}
		There are no nice $n$-tuples in $G(m, 1, n)$ for $n \geq 5$. 
	\end{theorem}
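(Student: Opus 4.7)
The plan is to show that for $n \geq 5$, the product $A = A_1 A_2 \cdots A_n$ of any candidate nice $n$-tuple in $G(m, 1, n)$ has $n$ pairwise distinct eigenvalues, which makes the required multiplicity $n - 2 \geq 3$ from Definition \ref{nice tup} impossible. First I would invoke Theorem \ref{GenSet}: any minimal generating set of $n$ reflections for $G(m, 1, n)$ consists of exactly $n - 1$ reflections of Type 1 together with one reflection of Type 2 (necessarily of order $m$). Under the homomorphism $\varphi \colon G(m, 1, n) \to S_n$ from Theorem \ref{imprim homo}, the Type 2 entry maps to the identity while each Type 1 entry maps to a transposition, so $\varphi(A)$ equals the product, in the order dictated by the tuple, of the $n - 1$ transpositions coming from the Type 1 indices.

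Next, since $\langle A_1, \dots, A_n \rangle = G(m, 1, n)$ is irreducible, these $n - 1$ transpositions generate a transitive subgroup of $S_n$, so the graph on $\{1, \dots, n\}$ whose edges are the transpositions is connected. By the Fact, a connected graph on $n$ vertices with $n - 1$ edges is a tree, and by the Claim the product of any tree-indexed collection of $n - 1$ transpositions is an $n$-cycle. Hence $\sigma := \varphi(A)$ is an $n$-cycle.

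Finally I would write $A = [a_1, \dots, a_n \mid \sigma]$ and compute its characteristic polynomial by direct cofactor expansion, mimicking the $n = 3$ lemma and the $G(m, 1, 4)$ theorem handled earlier in this section; this yields $\pm(x^n - \zeta_m^{a_1 + \cdots + a_n})$. The $n$ eigenvalues of $A$, hence of $A^{-1}$, are therefore the $n$ distinct $n$-th roots of $\zeta_m^{a_1 + \cdots + a_n}$, so no eigenvalue has multiplicity $\geq 2$, contradicting the niceness condition whenever $n \geq 5$. I expect no serious obstacle; the only subtlety is observing that the Claim applies regardless of where the Type 2 reflection sits in the tuple, which is immediate because both the tree hypothesis and the $n$-cycle conclusion depend only on the unordered multiset of transpositions.
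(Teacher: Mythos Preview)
Your proposal is correct and follows essentially the same route as the paper: use Theorem~\ref{GenSet} to force exactly $n-1$ Type~1 reflections and one Type~2 reflection, apply the tree/$n$-cycle Claim to conclude $\varphi(A)$ is an $n$-cycle, then read off the characteristic polynomial $(-1)^n(x^n-\zeta)$ to see all eigenvalues are distinct. Your remark that the position of the Type~2 reflection in the tuple is irrelevant is a valid observation that the paper leaves implicit (it simply declares the Type~2 reflection to be $r_n$ without comment).
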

	
	\begin{proof}
	  In $G(m, 1, n)$ any nice $n$-tuple consists of $n-1$ reflections of type~\ref{type1} and one reflection of type~\ref{type2}, say $r_n$. Suppose we have $r_1r_2\cdots r_n = A$ with $[r_1, \dots, r_n]$ a nice tuple. Then by the claim(s) above, the graph associated to the transpositions $\{\vphi(r_i) \}$, with $\vphi(r_n) = e$, must be a tree since the graph is connected by transitivity and therefore $\vphi(A)$ is an $n$-cycle. Thus the characteristic polynomial satisfies 
          $$\Char_A(x) = (-1)^n (x^n - \zeta)$$
          for some $\supth{m}$ root of unity $\zeta$ and therefore has $n$ distinct roots, so no $A$ has no eigenvalues of multiplicity $n-2$ for $n > 3$.
	\end{proof}

        For $G(m, m, n)$ we need the additional observations provided below.
	
	\begin{claim}\label{claim4.29}
	  Let $\sigma = (x_1x_2 \cdots x_n)$ be an $n$-cycle in $S_n$ and $\tau = (y_1y_2)$ be some transposition in $S_n$ with $y_1 <y_2$. Letting $c$ denote the distance between $y_1$ and $y_2$ in $\sigma$ modulo the length of $\sigma$, we have that the product $\sigma \tau$ is the product of a $c$-cycle and an $(n-c)$-cycle which are disjoint. 
	\end{claim}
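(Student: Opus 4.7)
The plan is to prove the claim by a direct orbit-chase of $\sigma\tau$. First I would fix notation: write $\sigma = (x_1 x_2 \cdots x_n)$ so that $\sigma(x_k) = x_{k+1}$ with indices read modulo $n$, relabel (by a cyclic rotation of the cycle) so that $y_1 = x_i$ and $y_2 = x_j$ with $1 \le i < j \le n$, and set $c = j - i$, so $0 < c < n$. Note that the other natural choice of distance, $n - c$, enters symmetrically in the conclusion, so this labeling is harmless.

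Next I would compute $(\sigma\tau)(x_k)$ pointwise. Since $\tau$ fixes every $x_k$ with $k \neq i, j$, we get $(\sigma\tau)(x_k) = \sigma(x_k) = x_{k+1}$ whenever $k \notin \{i, j\}$, while the two exceptional values are $(\sigma\tau)(x_i) = \sigma(x_j) = x_{j+1}$ and $(\sigma\tau)(x_j) = \sigma(x_i) = x_{i+1}$. In other words, $\sigma\tau$ agrees with $\sigma$ away from $\{x_i, x_j\}$ and on those two points it swaps the images that $\sigma$ would have assigned.

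Then I would trace the orbit of $x_i$ under $\sigma\tau$: the iterates are $x_i, x_{j+1}, x_{j+2}, \ldots, x_{i-1}$, each step given by $\sigma$ after the first. The key verification is that on the intermediate arc $x_{j+1}, x_{j+2}, \ldots, x_{i-1}$ we never pass through $x_j$, which is clear because the $\sigma$-iterates starting at $x_{j+1}$ reach $x_i$ after $n - c - 1$ steps, strictly before returning to $x_j$ after $n - 1$ steps. Counting elements gives an $(n-c)$-cycle. The orbit of $x_j$ is then the complementary arc $x_j, x_{i+1}, x_{i+2}, \ldots, x_{j-1}$, a $c$-cycle by the same counting. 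These two cycles are disjoint and exhaust $\{x_1, \ldots, x_n\}$, so $\sigma\tau$ factors as a disjoint product of a $c$-cycle and an $(n-c)$-cycle as claimed.

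There is no deep obstacle here; the only subtlety is keeping track of indices modulo $n$ and confirming that no intermediate iterate lands prematurely on $x_i$ or $x_j$. Visualizing the $n$-cycle as a cyclic arrangement of $x_1, \ldots, x_n$ with the chord $\{x_i, x_j\}$ drawn in makes it evident that multiplying by $\tau$ ``cuts'' the cycle at this chord into two arcs of lengths $c$ and $n-c$, which is essentially the content of the claim.
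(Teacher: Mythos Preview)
Your proof is correct and follows essentially the same orbit-chase argument as the paper: both set $y_1=x_i$, $y_2=x_j$ and trace the $\sigma\tau$-orbit of $x_i$ through $x_{j+1},\ldots,x_{i-1}$, then take the complementary arc from $x_j$. Your version is in fact a bit more careful than the paper's in verifying that the intermediate iterates do not land on $x_i$ or $x_j$ prematurely and in counting the cycle lengths.
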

	\begin{proof}
	  Since $\sigma$ fixes no points, it is enough to show that multiplying with $\tau$ cannot fix two points in $\{1, \dots, n\}$. Letting $y_1 = x_i$ and $y_2 = x_j$ with $i <j$, we then get
          $$y_1  \longmapsto x_{j+1} \longmapsto x_{j+2} \longmapsto \dots \longmapsto x_{i-1}  \longmapsto x_i = y_1,$$
          where the subscripts are read modulo $n$. The length of this cycle is precisely $j-i \mod n$. Doing a similar process for $y_2$, we get a cycle of length $n -(j-i) \mod n$. 
	\end{proof}
        
	Note that since $\sigma$ is written in disjoint cycle notation, the ``distance'', \textit{i.e.}~number of elements, between $y_1$ and $y_2$ read in the order of multiplication (\textit{e.g.}~left to right if multiplication is left to right) is well-defined.
        
	\begin{corollary}
		In the notation of Claim~\ref{claim4.29}, if $n > 2$, then the product $\sigma \tau$ has at most one fixed point, with this occurring only if $y_1$ and $y_2$ are adjacent in the cycle $\sigma$. 
	\end{corollary}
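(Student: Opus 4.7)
The plan is to deduce the corollary directly from the preceding claim, which guarantees that $\sigma\tau$ decomposes as a disjoint product of a $c$-cycle and an $(n-c)$-cycle, where $c$ is the distance between $y_1$ and $y_2$ in $\sigma$ (mod $n$). A fixed point of $\sigma\tau$ corresponds precisely to a $1$-cycle appearing in this disjoint decomposition.

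First I would translate the fixed point count into a condition on $c$: the permutation $\sigma\tau$ has a fixed point if and only if $c = 1$ or $n - c = 1$, and it has two fixed points only if \emph{both} $c = 1$ and $n - c = 1$ hold. The second situation forces $n = 2$, contradicting the hypothesis $n > 2$, so I conclude that $\sigma\tau$ has at most one fixed point.

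Next I would show that when a fixed point does exist, $y_1$ and $y_2$ must be adjacent in $\sigma$. By definition $c$ is the cyclic distance between $y_1$ and $y_2$ inside the $n$-cycle $\sigma = (x_1 x_2 \cdots x_n)$, so $c = 1$ means $y_2$ immediately follows $y_1$ in $\sigma$, and $n - c = 1$ means $y_1$ immediately follows $y_2$. Either way, the entries swapped by $\tau$ occupy consecutive positions in the cyclic sequence defining $\sigma$, which is exactly what adjacency in $\sigma$ means.

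There is no substantial obstacle here; the statement is essentially a one-line consequence of the previous claim once the fixed-point condition is rephrased as $c \in \{1, n-1\}$. The only point requiring care is to ensure the convention for measuring the distance $c$ is consistent with the one fixed in the preceding claim, so that the adjacency claim is literally the statement $c = 1$ or $n - c = 1$ and no off-by-one ambiguity arises.
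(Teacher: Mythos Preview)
Your proof is correct and follows the same approach as the paper: both deduce the result from the preceding claim by noting that a fixed point of $\sigma\tau$ corresponds to a $1$-cycle in the $c$-cycle/$(n-c)$-cycle decomposition, so at most one fixed point can occur when $n>2$, and only when $c\in\{1,n-1\}$, i.e.\ when $y_1,y_2$ are adjacent. Your version is in fact slightly more explicit than the paper's, which only spells out the adjacent case and leaves the non-adjacent case implicit.
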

	\begin{proof}
		If $y_1$ and $y_2$ are adjacent, then $c  = 1$, so we are left with just an $(n-1)$-cycle which has exactly one fixed point when $n-1 > 1$ and two fixed points when $ n = 2$. 
	\end{proof}
	
	\begin{theorem}
		There are no nice $n$-tuples in $G(m, m, n)$ for $n \geq 5$.
	\end{theorem}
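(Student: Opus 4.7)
The plan is to reduce the problem to a statement about the cycle type of $\sigma := \vphi(A) \in S_n$, where $A := r_1 r_2 \cdots r_n$ is the ordered product of the alleged nice tuple. Since $G(m,m,n)$ contains no Type 2 reflections, every $r_i = s(j_i,k_i;b_i)$ is of Type 1, so $\tau_i := \vphi(r_i) = (j_i\, k_i)$ is a transposition. The multigraph $\Gamma$ on $\{1,\ldots,n\}$ with edge multiset $\{\tau_i\}_{i=1}^n$ must be connected: otherwise $\{\tau_i\}$ would generate an intransitive subgroup of $S_n$ and $\inn{r_1,\ldots,r_n}$ could not be irreducible. With $n$ edges on $n$ vertices, connectedness forces $\Gamma$ to contain exactly one graph-cycle, of length at least $2$.

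The key observation is that the characteristic polynomial of a generalized permutation matrix factors according to the cycle decomposition of its underlying permutation: if $\sigma$ has disjoint cycles $C_1,\ldots,C_s$ of lengths $\ell_1,\ldots,\ell_s$, then
\[
\operatorname{char}_A(x) \;=\; \prod_{j=1}^s \bigl(x^{\ell_j}-\eta_j\bigr), \qquad \eta_j := \prod_{i\in C_j}\zeta_m^{a_i}.
\]
Because each factor $x^{\ell_j}-\eta_j$ has $\ell_j$ distinct roots, the multiplicity of any eigenvalue of $A$ is bounded above by the number of cycles $s$. Hence it suffices to show $s \leq 2$, since a nice $n$-tuple demands an eigenvalue of multiplicity $n-2 \geq 3$ once $n \geq 5$.

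To bound $s$, I exploit cyclic rotation. For any shift $t$, the product $r_{t+1}r_{t+2}\cdots r_n r_1\cdots r_t$ is conjugate to $A$ in $G(m,m,n)$, so its image under $\vphi$ is conjugate to $\sigma$ and has the same cycle type. I choose $t$ so that the transposition $\tau_t$ occupying the last position of the rotated product is an edge lying on the unique graph-cycle of $\Gamma$; this is possible since that cycle has length at least $2$. Deletion of this one edge leaves $n-1$ (distinct) edges still spanning all $n$ vertices, so they form a spanning tree of $\Gamma$. By the first claim in the excerpt, the product of these remaining $n-1$ transpositions (in their rotated order) is an $n$-cycle; by the second claim, multiplying this $n$-cycle on the right by $\tau_t$ yields a permutation of cycle type $(c,n-c)$ for some $c \in \{1,\ldots,n-1\}$. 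So $s=2$ (counting a fixed point as a $1$-cycle), forcing the eigenvalue-multiplicity bound $2 < n-2$, the required contradiction.

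The main subtlety is the cyclic-rotation step: one must verify that removing the chosen edge leaves a genuine tree with $n-1$ distinct edges, so that the first claim applies cleanly. This is ensured by the fact that the unique graph-cycle of $\Gamma$ has length at least $2$, so it contains at least two edges (possibly a pair of parallel edges forming a digon, or a simple cycle of length $\geq 3$); removing any one of them preserves connectedness and yields a spanning tree on $\{1,\ldots,n\}$.
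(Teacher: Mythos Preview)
Your proof is correct and follows essentially the same route as the paper's: both arguments use connectivity of the transposition graph to find a redundant edge, cyclically rotate it to the last position, invoke the tree claim to see that the first $n-1$ factors multiply to an $n$-cycle, and then use the $n$-cycle-times-transposition claim to conclude that $\vphi(A)$ has at most two cycles, forcing eigenvalue multiplicity at most $2$. Your statement of the general principle---that the characteristic polynomial of a generalized permutation matrix factors over the cycle decomposition, so multiplicity is bounded by the number of cycles---is a slightly cleaner packaging than the paper's explicit case split into $(n-1,1)$ and $(c,n-c)$, but the content is the same.
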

	\begin{proof}
		In $G(m, m, n)$ any nice tuple $[r_1, r_2, \dots, r_n]$ consists of $n$ reflections of type~\ref{type1}. Again by transitivity, we know the graph associated to $\{ \vphi(r_i)\}$ is connected, and since we only need $n-1$ edges to make a connected graph on $n$ vertices, we know there is some transposition we can remove so that the associated graph is still connected. By cyclic permutation of the $r_i$ via simultaneous conjugation, we can assume the redundant transposition corresponds to $r_n$, so $\vphi(A \cdot r_n^{-1})$ is an $n$-cycle by  Claim~\ref{claim4.29}. This tells us that $\vphi(A)$ is the product of an $n$-cycle and a transposition. 
		By the proofs above, we know $\vphi(A)$ is either an $(n-1)$-cycle or the product of a $c$-cycle and an $(n-c)$-cycle. So  the characteristic polynomial is either 
                $$\Char_A(x) = (-1)^n(x^{n-1}-\zeta_1)(x-\zeta_2)$$ or
                $$\Char_A(x) = (-1)^n (x^c -\zeta_1)(x^{n-c}-\zeta_2).$$
		We see that in both cases an eigenvalue of $A$ can have at most multiplicity 2, so when $n > 4$, there are no eigenvalues of multiplicity $n-2$, hence no such nice tuples can exist in $G(m, m, n)$. 
	\end{proof}
	
	\section{Primitive reflection groups} \label{prim}
	
	For each rank $r \in \{3, 4\}$, we investigate the existence of nice $T$-tuples for $T = r$ and $T = r+1$ in the primitive reflection groups, denoted $G_{i}$ for $i = 23$, $24$, $\dots$, $32$. 
	For each group and each choice of $T$, we provide a table of the different types (up to inverse pairs), including the label for the type, the residues of the eigenvalues, as well as the nontrivial eigenvalues of the exemplar tuple under ``Tuple eigenvalues''.
	We also list the chosen exemplars $\Abf = [A_1, A_2, \dots, A_T]$ for each type. For each exemplar we compute the middle convolution with the suitable choice(s) of parameter $\lambda$; this data is available in Appendices~\ref{appendix A},~\ref{appendix B},~\ref{appendix C}. These tables also include, for each choice of eigenvalue $\lambda$, the residue of $\lambda$ in the ``$\xi$'' column,  as well as the notation used in Magma to identify the root of unity under ``$\lambda$''.  We list the $2 \times 2$ matrices composing $MC(\Abf) = [M_1, \dots, M_{T}]$ and record the order of the $\GL_2(\C)$ subgroup generated by these matrices under ``S.~size''. Note that a zero in this column means the subgroup is infinite. 
	
	To get the induced tuple $\mathbf{\hat{A}}$ from $MC_{\lambda}(\Abf)$, many choices of character will work, so we provide the one we use under the column ``Character'', sometimes abbreviated as ``Char.''. 
	In some cases we may need to go to a larger field to construct a suitable character. We then compute and record the braid group orbit size of the induced tuple (under the action of the positive powers of the braid group generators) and record it under ``O.~size''. More information about how we compute the braid group orbit is provided in Section~\ref{BGSection}.
	For examples where the braid group orbits are the same size, we check if the signature of one lies in the orbit of the other.
        
	\begin{definition}\label{equivorb}
		We say two braid group orbits are \emph{equivalent} if the signature of one of the induced tuples lies in the orbit of another induced tuple.
	\end{definition}  

        This is one reason why our braid group code optionally can return the list of signatures in each orbit. 
	Note that this differs from the broader notion of equivalence in other literature such as Tykhyy's classification \cite{Tyk}. Note that the work of Lisovyy--Tykhyy \cite{T-L} and Tykhyy \cite{Tyk} only consider the case where matrices in the tuple do not have a shared eigenvector. In our case since nice tuples give an irreducible representation and middle convolution preserves irreducibility, we know all our tuples cannot share a common eigenvector. Thus, we expect to identify which tuples in these classifications come from the complex reflection groups via middle convolution. Details on what parameters we use to compare our orbits with these classifications is provided below at the start of Sections~\ref{prim3} and~\ref{prim4}.   
	
	For each distinct orbit we find, we provide data in the comparison tables on how our orbits relate to other classifications, as well as identify the finite $\SL_2(\C)$ subgroups generated by the tuples. We list the subgroups of $\SL_2(\C)$ in the Magma notation, given by $\inn{\Order, \Number}$, where $\Order$ is the order of the finite group and $\Number$ is the numbering of the group in the Small Groups Database. The numbering used by Magma is the same as that in the Small Groups Database implemented in GAP (see \cite[p.~779]{MagmaHandbook})
        and can be readily found online for all the groups we see. For infinite groups, the order is listed as ``0'', following the notation used in Magma. The notation $\SL(2, k)$ means $2\times 2$ matrices of determinant 1 over the finite field of~$\F_k$.
	
		\subsection{Nice 3-tuples in rank 3 groups} \label{prim3}
	Nice tuples in this case will be tuples of three reflections generating the whole group such that their inverse product has some nontrivial eigenvalue of multiplicity 1. 
	After middle convolution, the induced tuple will correspond to $\SL_2(\C)$ local systems on the 4-punctured sphere, and we can compare our braid group orbits to those in the literature. We computed the parameters $\omega_X, Y, Z, 4 = (\omega_X, \omega_Y, \omega_Z, 4-\omega_4)$ in the notation of \cite[Equations 11--12]{T-L} and found that the $\omega$-parameters for each of our orbits match (up to permutation) with one listed in the work of Lisovyy--Tykhyy. Since Tykhyy's classification \cite{Tyk} further refines the original classification by Lisovyy--Tykhyy \cite{T-L}, each tuple of $\omega$-parameters gives multiple orbits that Tykhyy identifies uniquely in terms of residues of the trace coordinates  $\theta_{1, 2, 3, 4} =(\theta_1, \theta_2, \theta_3, \theta_4)$ and $\sigma_{12, 23, 13, 24} = (\sigma_{12}, \sigma_{23}, \sigma_{13}, \sigma_{24})$ for some representative. In the nice 3-tuple case, using the $\omega$-parameters, we are able to easily identify how our orbit size relates to the orbit size listed by Tykhyy for each $\mathcal{M}^{(4)}$ (in Tykhyy's notation) and narrow down the possibilities for which of Tykhyy's orbits our tuple corresponds to. 
	In Tykhyy's classification,  tuples are considered equivalent up to  multiplying any two matrices by $-1$, complex conjugation of all elements of all matrices, cyclically permuting the matrices in the tuple, and up to taking inverses and reversing the order of the tuple; see \cite{Tyk}. For each of our orbits, we perform all these operations and compute the residue signatures based on Tykhyy's definition  (see \cite[Section 2, p.~122, Items~(1)--(3)]{Tyk})  so we can compare with the examples in Table 1 of Tykhyy's work. In most cases we find an exact match, with a few exceptions corresponding to Orbit 5 in Tykhyy's Table 1. For those tuples, we mark the differing coordinate with an asterisk (see Sections~\ref{G_{25} 3-tups} and~\ref{G_{26} 3-tups}), noting that they still agree with Tykhyy's list as the $\theta$- and $\sigma$-coordinates are defined modulo 2 and up to a sign in Tykhyy's work; see \cite[Section 2, p.~122, Item~(3)]{Tyk}.   
        
	Note that we only list this data for orbits that are considered distinct by our Definition~\ref{equivorb},  and these may end up being the same orbit in the Lisovyy--Tykhyy or Tykhyy classifications; see \cite{T-L,Tyk}. 
	For each orbit we compute, we provide the identifying label used in both classifications, as well as our orbit size and the orbit size they provided  (which is the same in both works).

	\subsubsection{$G_{23}$} \label{G_{23} 3-tups}
	The group $G_{23}$ of order 120 is also denoted $W(H_3)$ with degrees $\deg (G_{23}) = \{2, 6, 10\}$; see \cite[Table D.3]{ld2009}. All reflections in this group are conjugate with nontrivial eigenvalue $-1$. 
	For nice 3-tuples in $G_{23}$, the computation returned three possible sets of eigenvalues for the inverse product.  With $q = \zeta_5$, we provide one example of a nice tuple such that the inverse product has eigenvalues corresponding to each type.
        
	\begin{table}[ht!]
		\centering
		\begin{tabular}{|c|c|c|} 
			\hline 
			\multicolumn{3}{|c|}{$G_{23}$ types} \rule[0pt]{0pt}{3ex}\\ 
			\hline
			Type & Tuple eigenvalues & Residues of inverse product  \rule[0pt]{0pt}{3ex}\\ [0.5ex] 
			\hline
			A & $(-1, -1, -1)$ & $\{\frac{1}{2}, \frac{1}{10}, \frac{9}{10}\}$  \rule[0pt]{0pt}{3ex} \\ [0.5ex] 
			\hline 
			B & $(-1, -1, -1)$ & $\{ \frac{1}{2}, \frac{3}{10},  \frac{7}{10} \}$  \rule[0pt]{0pt}{3ex}\\ [0.5ex] \hline
			C & $(-1, -1, -1)$ & $\{ \frac{1}{2}, \frac{1}{6}, \frac{5}{6}\}$  \rule[0pt]{0pt}{3ex}\\ [0.5ex]\hline
		\end{tabular} \caption{\label{G_{23} Types}}
	\end{table}

	\begin{table}
		\centering
		\begin{tabular}{|c|c|c|c|c|c|c|c|c|} 
			\hline
			$\lambda$ & $\omega_{X, Y, Z, 4}$ & \begin{sideways} Lisovyy--Tykhyy $\#$ \end{sideways} & $\theta_{1, 2, 3, 4}$ & $\sigma_{12, 23, 13, 24}$   & \begin{sideways} Tykhyy $\#$  \end{sideways}  & \begin{sideways} Their orbit size \end{sideways} & \begin{sideways} Our orbit size  \end{sideways} & \begin{sideways} $\SL_2(\C)$ subgroup \vspace{0.5ex}  \end{sideways} \rule[0pt]{0pt}{4ex}\\ [0.5ex] 
			\hline 
			\multicolumn{9}{|c|}{$G_{23}$ type A comparisons} \rule[0pt]{0pt}{3ex}\\ 
			\hline
			-1 & \small{$(3 -\sqrt{5}, 3-\sqrt{5}, 3-\sqrt{5}, \frac{7\sqrt{5}-11}{2})$ }& $16$ & $(1, 1, 1, \frac{1}{5})$ & $(\frac{3}{5}, \frac{3}{5}, \frac{3}{5}, \frac{1}{3})$  & 60 &  10 & 40 & 0 \rule[0pt]{0pt}{4ex} \\ [1ex] 
			\hline 
			$-q^3$ & \small{$(3 -\sqrt{5}, 3-\sqrt{5}, 3-\sqrt{5}, \frac{7\sqrt{5}-11}{2})$} & 16 & $(\frac{3}{5}, \frac{2}{5}, \frac{3}{5}, \frac{2}{5})$ & $(\frac{2}{5}, \frac{2}{5}, \frac{1}{3} , \frac{3}{5})$ & 57  & 10 & 10 & $\inn{120, 5}$ \rule[0pt]{0pt}{4ex}\\ [1ex] \hline 
			\multicolumn{9}{|c|}{$G_{23}$ type B comparisons} \rule[0pt]{0pt}{3ex}\\ 
			\hline
			-1 & \small{$(3+\sqrt{5}, 3 + \sqrt{5}, 3 + \sqrt{5}, -\frac{7\sqrt{5}+11}{2})$} & 17 & $(1, 1, 1, \frac{3}{5})$ & $(0, \frac{1}{5}, \frac{1}{3},\frac{1}{3})$ & 58 & 10 & 40 & 0 \rule[0ex]{0ex}{4ex} \\ [1ex]
			\hline
			$-q^4$ & \small{$(3+\sqrt{5}, 3 + \sqrt{5}, 3 + \sqrt{5}, -\frac{7\sqrt{5}+11}{2})$} & 17 & $(\frac{4}{5}, \frac{4}{5}, \frac{4}{5}, \frac{4}{5})$ & $(\frac{1}{5}, 0, \frac{1}{3}, \frac{1}{3})$ & 59& 10 & 10 & $\inn{120, 5}$ \rule[0pt]{0pt}{4ex} \\ [1ex] \hline
			\multicolumn{9}{|c|}{$G_{23}$ type C comparisons} \rule[0pt]{0pt}{3ex}\\ 
			\hline
			-1 & $(2, 2,2, -1)$ & 31 & $(1, \frac{1}{3}, 0 , 0)$ & $(\frac{1}{5},1, \frac{2}{5}, \frac{2}{5})$ & 99 & 18 & 72 & 0 \rule[0ex]{0ex}{4ex} \\ [1ex] \hline
			$z^5$ & $(2, 2, 2, -1)$ & 31& $(\frac{2}{3}, \frac{2}{3}, \frac{2}{3}, \frac{2}{3})$ & $ (0, \frac{1}{5}, \frac{3}{5}, \frac{3}{5})$ & 96 & 18 & 18 & $\inn{120, 5}$  \rule[0ex]{0ex}{4ex} \\ [1ex] \hline
		\end{tabular} \caption{\label{G_{23} Nice 3-Tuple Comparisons}Here $q = \zeta_5$, $z = \zeta_{30}$, and all three subgroups of order 120 in $\SL_2(\C)$ are isomorphic to $\SL(2, 5)$.}
	\end{table}

	For type A in $G_{23}$, one exemplar is
	\begin{align*}
		A_1 &= \begin{pmatrix} 1 & 0 & 0 \\ 0 & 1 & 1 \\ 0 & 0 & -1
		\end{pmatrix}, \quad A_2 = \begin{pmatrix}
			0 & -1 & 0 \\ -1 & 0 & 0 \\ -q^3-q^2 + 1 & -q^3-q^2+1 & 1 
		\end{pmatrix},  \quad A_3 = \begin{pmatrix}
			1 & 0 & 0 \\ q^3+q^2 & -1 & -1 \\ 0 & 0 & -1
		\end{pmatrix}.
	\end{align*}
	Similarly, for type B, an exemplar is 
	\begin{align*}
		B_1 = \begin{pmatrix}1&0&0\\ 0&1&1\\ 0&0&-1  \end{pmatrix}, \quad B_2 =\begin{pmatrix}1&-q^3-q^2&-q^3-q^2\\ 0&0&-1\\ 0&-1&0\end{pmatrix}, \quad B_3 = \begin{pmatrix}1&0&0\\ q^3+q^2&-1&-1\\ 0&0&1\end{pmatrix}.
	\end{align*} 	
	For type C, an exemplar is 	\begin{align*}
		C_1 = \begin{pmatrix}1&0&0\\ 0&1&1\\ 0&0&-1\end{pmatrix}, \quad C_2 = \begin{pmatrix}1&-q^3-q^2&0\\ 0&-1&0\\ 0&1&1\end{pmatrix}, \quad C_3 = \begin{pmatrix}0&-1&0\\ -1&0&0\\ -q^3-q^3+1&-q^3-q^2+1&1\end{pmatrix}. 
	\end{align*}
	The data for all three types for $G_{23}$ is compiled in Table~\ref{G_{23} Nice 3-Tuples}. For all three types, choosing a different exemplar would not have resulted in a new orbit. The two orbits of size 10 arising from type A are the same, as are the two orbits of size 10 from type B. However, these two orbits are distinct across the two types in the sense that their signatures did not show up in the others' orbit. For type C, the two orbits of size 18 are the same. Information about how these six distinct orbits relate to the other classifications is provided in Table~\ref{G_{23} Nice 3-Tuple Comparisons}. In this group we also find nice 4-tuples; details are provided in Section~\ref{G_{23} 4-tups}

		\subsubsection{$G_{24}$} \label{G_{24} 3-tups}
	The group $G_{24}$ of order 336 is also denoted $W(J_{3}(4))$ with degrees $\deg(G_{24}) = \{4, 6, 14\}$ \cite[Table D.3]{ld2009}. All reflections in this group are conjugate with nontrivial eigenvalue $-1$. 
	Looking at the degrees, we first expand the base field to be the cyclotomic field of degree 84. There is one inverse pair of nice 3-tuples in this group, so we only provide the details for one in Table~\ref{G_{24} Types}.
	\begin{table}[H]
		\centering
		\begin{tabular}{|c|c|c|} 
			\hline 
			\multicolumn{3}{|c|}{$G_{24}$ types} \rule[0pt]{0pt}{3ex}\\ 
			\hline
			Type & Tuple eigenvalues & Residues of inverse product  \rule[0pt]{0pt}{3ex}\\ [0.5ex] 
			\hline
			A & $(-1, -1, -1)$ & $\{\frac{1}{14}, \frac{9}{14}, \frac{11}{14}\}$  \rule[0pt]{0pt}{3ex} \\ [0.5ex]  
			\hline 
		\end{tabular} \caption{\label{G_{24} Types}}
	\end{table}
	
	Letting $z = \zeta_7$, an exemplar of type A is
	\begin{align*}
		A_1 = \begin{pmatrix}1&-z^4-z^2-z&-z^4-z^2-z\\ \:0&0&-1\\ \:0&-1&0\end{pmatrix},  \quad A_2 = \begin{pmatrix}1&1&0\\ \:0&-1&0\\ \:0&1&1\end{pmatrix}, \quad A_3 = \begin{pmatrix}-1&0&0\\ \:1&1&0\\ \:z^4+z^2+z&0&1\end{pmatrix}.
	\end{align*}
	The middle convolution data for $G_{24}$ is provided in Table~\ref{G_{24} Nice 3-Tuples}. These three distinct orbits are the only orbits from type A, and we checked that in all three cases, the inverse of the exemplars do not produce any new orbits. The comparison data for $G_{24}$ is provided in Table~\ref{G_{24} Type Comparisons}.
       
	\begin{table}[H]
		\centering
		\begin{tabular}{|c|c|c|c|c|c|c|c|c|} 
			\hline
			$\lambda$ & $\omega_{X, Y, Z, 4}$ & \begin{sideways} Lisovyy--Tykhyy $\#$ \end{sideways} & $\theta_{1, 2, 3, 4}$ & $\sigma_{12, 23, 13, 24}$   & \begin{sideways} Tykhyy $\#$ \end{sideways}  & \begin{sideways} Their orbit size \end{sideways} & \begin{sideways} Our orbit size  \end{sideways} & \begin{sideways} $\SL_2(\C)$ subgroup \vspace{0.5ex} \end{sideways} \rule[0pt]{0pt}{4ex}\\ [0.5ex] 
			\hline 
			\multicolumn{9}{|c|}{$G_{24}$ type A comparisons} \rule[0pt]{0pt}{3ex}\\ 
			\hline
			$-z^4$ & $(1, 1, 1, 0)$ & $8$ & $(\frac{4}{7}, \frac{4}{7}, \frac{3}{7}, \frac{1}{7})$ & $(\frac{1}{2}, \frac{1}{2}, \frac{1}{2}, \frac{2}{3})$  & 28 &  7 & 28 & 0 \rule[0pt]{0pt}{4ex} \\ [1ex] 
			\hline 
			$-z$ & \small{$(1, 1, 1, 0)$} & 8 & $(\frac{1}{7}, \frac{5}{7}, \frac{1}{7}, \frac{1}{7})$ & $(\frac{1}{2}, \frac{1}{2}, \frac{1}{3} , \frac{1}{2})$ & 30  & 7 & 28 & 0 \rule[0pt]{0pt}{4ex}\\ [1ex] \hline 
			$-z^2$ & \small{$(1, 1,1, 0)$} & 8 & $(\frac{2}{7}, \frac{2}{7}, \frac{4}{7}, \frac{2}{7})$ & $(\frac{1}{2}, \frac{1}{2}, \frac{1}{3}, \frac{1}{2})$ & 29 & 7 & 28 & 0 \rule[0ex]{0ex}{4ex} \\ [1ex]
			\hline
		\end{tabular} \caption{\label{G_{24} Type Comparisons}Here $z = \zeta_{7}$.}
	\end{table}
	
	\subsubsection{$G_{25}$} \label{G_{25} 3-tups}
	The group $G_{25}$ of order 648 is also denoted $W(L_3)$ with degrees $\deg(G_{25}) = \{6, 9, 12\}$; see  \cite[Table D.3]{ld2009}. 
	The group $G_{25}$ has two conjugacy classes, consisting of matrices with nontrivial eigenvalue $w$ or $w^2$, respectively, for $w = \zeta_3$.
	Expanding the base field to be the cyclotomic field of order 36, we get eight possible types, which actually split into four pairs of inverses. We only list one set of inverse eigenvalues for each pair in Table~\ref{G_{25} Types}.
	\begin{table}[h!]
		\centering
		\begin{tabular}{|c|c|c|} 
			\hline 
			\multicolumn{3}{|c|}{$G_{25}$ types} \rule[0pt]{0pt}{3ex}\\ 
			\hline
			Type & Tuple eigenvalues & Residues of inverse product  \rule[0pt]{0pt}{3ex}\\ [0.5ex] 
			\hline
			A & $(w,w, w^2)$ & $\{\frac{5}{6}, \frac{1}{6}, \frac{2}{3}\}$  \rule[0pt]{0pt}{3ex} \\ [0.5ex] 
			\hline 
			B & $(w, w^2, w^2)$ & $\left\{ \frac{1}{9}, \frac{7}{9},  \frac{4}{9} \right\}$  \rule[0pt]{0pt}{3ex}\\ [0.5ex] \hline
			C & $(w, w, w)$ & $\{ \frac{11}{12}, \frac{5}{12}, \frac{2}{3}\}$  \rule[0pt]{0pt}{3ex}\\ [0.5ex]\hline
			D & $(w, w, w)$ & $\{\frac{1}{3}, \frac{5}{6}, \frac{5}{6}\}$  \rule[0pt]{0pt}{3ex}\\ [0.5ex] \hline
		\end{tabular} \caption{\label{G_{25} Types}Here $w = \zeta_3$ denotes a $\suprd{3}$ root of unity.}
	\end{table}
	
	With $w = \zeta_3$ the exemplars for each type are provided below. 
	An exemplar of type A is  
	\begin{align*}
		A_1 = \begin{pmatrix}1&0&0\\ 0&1&w^2\\ 0&0&w\end{pmatrix}, \quad A_2 = \begin{pmatrix}w&0&0\\ w^2&1&0\\ 0&0&1\end{pmatrix}, \quad A_3 = \begin{pmatrix}1&w&0\\ 0&w^2&0\\ 0&w&1\end{pmatrix}.
	\end{align*}
	An exemplar of type B is  
	\begin{align*}
		B_1 = \begin{pmatrix}1&0&0\\ 0&1&w^2\\ 0&0&w\end{pmatrix}, \quad B_2 = \begin{pmatrix}-w&1&w^2\\ 1&-w^2&1\\ w^2&1&-w\end{pmatrix}, \quad B_3 = \begin{pmatrix}1&w&0\\ 0&w^2&0\\ 0&w&1\end{pmatrix}. 
	\end{align*}
	An exemplar of type C is 
	\begin{align*}
		C_1 = \begin{pmatrix}1&-w^2&0\\ 0&w&0\\ 0&-w^2&1\end{pmatrix}, \quad C_2 = \begin{pmatrix}w&0&0\\ w^2&1&0\\ 0&0&1\end{pmatrix}, \quad C_3 = \begin{pmatrix}1&0&0\\ -w^2&w&-w^2\\ 0&0&1\end{pmatrix}. 
	\end{align*}
	Note that for type D, there are only two choices of eigenvalue to take middle convolution with, and only one of them will result in $2\times 2$ matrices. An exemplar of type D is  
	\begin{align*}
		D_1 = \begin{pmatrix}1&-w^2&-w\\ 0&-w^2&1\\ 0&-w&0\end{pmatrix}, \quad D_2 = \begin{pmatrix}w&0&0\\ w^2&1&0\\ 0&0&1\end{pmatrix}, \quad D_3 = \begin{pmatrix}0&-w&w^2\\ 0&1&0\\ -w^2&-1&-w^2\end{pmatrix}. 
	\end{align*}
	The details for the middle convolution of each type are provided in Table~\ref{G_{25} Nice 3-Tuples}. For all three types, different exemplars do not produce new orbits, nor do their inverses. For type A, the two orbits of size 12 are distinct, as are all three orbits of size 36 from type B. For type C, we had to extend to a field containing $\zeta_{72}$ to construct the character. The two orbits of size 4 from type C are the same. Including the one from type~D, we get eight distinct orbits from $G_{25}$. The comparison data is provided in Table~\ref{G_{25} Nice 3-Tuples Comparisons}. Comparing with the Lisovyy--Tykhyy paper, all the $\omega$-parameters seemed to correspond to the family of orbits listed in \cite[Lemma~39]{T-L}, so we have provided the specific values for the variables that work in each case. 
	In this group we also find nice 4-tuples; details are provided in Section~\ref{G_{25} 4-tups}.

        \clearpage
	
	\begin{table}
	  \centering
		\begin{tabular}{|c|c|c|c|c|c|c|c|c|} 
			\hline
			$\lambda$ & $\omega_{X, Y, Z, 4}$ & \begin{sideways}Lisovyy--Tykhyy $\#$ \end{sideways} & $\theta_{1, 2, 3, 4}$ & $\sigma_{12, 23, 13, 24}$   & \begin{sideways} Tykhyy $\#$ \end{sideways}  & \begin{sideways} Their orbit size \end{sideways} & \begin{sideways} Our orbit size  \end{sideways} & \begin{sideways} $\SL_2(\C)$ subgroup \vspace{1ex} \end{sideways} \rule[0pt]{0pt}{4ex}\\ [1ex] 
			\hline 
			\multicolumn{9}{|c|}{$G_{25}$ type A comparisons} \rule[0pt]{0pt}{3ex}\\ 
			\hline
			$-w$ & $(0, 0, 3, -2)$& Type II, $X' = 2, X'' = 1$ & $( \frac{1}{6}, \frac{1}{2}, \frac{1}{6}, \frac{1}{2})$ & $(\frac{1}{2}, \frac{1}{2}, \frac{1}{3}, 0)$  & 3 &  2 & 12 & $\inn{24, 4}$ \rule[0pt]{0pt}{4ex} \\ [1ex] 
			\hline 
			$-w^2$ & $(0, 0, 3, -2)$ & Type II, $X' = 2, X'' = 1$ & $(\frac{1}{6}, \frac{1}{2}, \frac{1}{6}, \frac{1}{2})$ & $(\frac{1}{2}, \frac{1}{2}, \frac{1}{3}, 0)$ & 3  & 2 & 12 & $\inn{24, 4}$ \rule[0pt]{0pt}{4ex}\\ [1ex] \hline 
			$w^2$ & $(0, 0, 3, -2)$ & Type II, $X' = 2, X'' = 1$ & $(1,  \frac{1}{3},1, \frac{2}{3})$ & $( \frac{1}{2}, \frac{1}{2} ,0,  \frac{1}{3})$ & 4  & 2 & 24 & 0 \rule[0pt]{0pt}{4ex}\\ [1ex] \hline 
			\multicolumn{9}{|c|}{$G_{25}$ type B comparisons} \rule[0pt]{0pt}{3ex}\\ 
			\hline
			$z^4$ & (2, -1, -1, -1) & Type III, $\omega = -1$ & $(\frac{4}{9}, \frac{2}{9}, \frac{2}{9}, \frac{2}{3})$ & $(\frac{1}{2}, \frac{1}{3}, \frac{1}{2}, \frac{2}{3})$ & 5 & 3 & 36 & 0 \rule[0ex]{0ex}{4ex} \\ [1ex]
			\hline
			$-z^{10}$ & (2, -1, -1, -1) & Type III, $\omega = -1$ & $(\frac{8}{9}, \frac{4}{9}, \frac{4}{9}, \frac{2}{3})$ & $(\frac{1}{2}, \frac{1}{3}, \frac{1}{2}, \frac{2}{3}^*)$ &$ 5^*$ & 3 & 36 & 0 \rule[0ex]{0ex}{4ex} \\ [1ex]
			\hline
			$z^{16}$ & (2, -1, -1, -1) & Type III, $\omega = -1$ & $(\frac{2}{9}, \frac{1}{9}, \frac{1}{9}, \frac{2}{3})$ & $(\frac{1}{2}, \frac{1}{3}, \frac{1}{2}, \frac{1}{3})$ & 5 & 3 & 36 & 0 \rule[0ex]{0ex}{4ex} \\ [1ex]
			\hline
			\multicolumn{9}{|c|}{$G_{25}$ type C comparisons} \rule[0pt]{0pt}{3ex}\\ 
			\hline
			$-z^9+z^3$ & $(4,4,4, -8)$ & Type IV, $\omega = 4$ & $(\frac{1}{4}, \frac{1}{4}, \frac{1}{4}, \frac{1}{4})$ & $(\frac{1}{3}, \frac{1}{3}, \frac{1}{3}, 0)$ & 6 & 4 & 4 & $\inn{48, 28}$ \rule[0ex]{0ex}{4ex} \\ [1ex] \hline
			$-z^6$ & $(4, 4, 4, -8)$ & Type IV, $\omega = 4$ & $(0, 0, 0, \frac{1}{2})$ & $ (\frac{1}{3}, \frac{1}{3}, \frac{1}{3},0 )$ & 7 & 4 & 4 & 0   \rule[0ex]{0ex}{4ex} \\ [1ex] \hline
			\multicolumn{9}{|c|}{$G_{25}$ type D comparisons} \rule[0pt]{0pt}{3ex}\\ 
			\hline
			$w$ & $(3, 3, 3, -5)$ & Type I, $X =Y=Z=1$ & $(\frac{2}{3}, \frac{2}{3}, 0, \frac{1}{3})$ & $ (\frac{1}{3}, \frac{2}{3}, \frac{2}{3}, \frac{2}{3})$ & 2 & 1 & 4 & $\inn{24, 3} $  \rule[0ex]{0ex}{4ex} \\ [1ex] \hline
		\end{tabular} \caption{\label{G_{25} Nice 3-Tuples Comparisons}Here $w = \zeta_3, z = \zeta_{36}$. The subgroups $\inn{24, 4}, \inn{48, 28}, \inn{24, 3}$ correspond to the dicyclic group of order 24 (binary dihedral group), binary octahedral group, and  $\SL(2, 3)$, respectively.
        The signature for Tykhyy's orbit $5$ is $\theta_{1, 2, 3, 4} = (2x, x, x, \frac{2}{3})$ and $\sigma_{12, 23, 13, 24} = (\frac{1}{2}, \frac{1}{3}, \frac{1}{2}, 3x)$, but such a signature was not in the orbit, so we provide the closest match. The coordinate that differs from the formula is marked with an asterisk, but note that as $\sigma_{24}$ is only defined modulo 2 and up to sign, we believe this still corresponds to orbit 5 of Tykhyy's Table 1. }
	\end{table}

        \clearpage
	\subsubsection{$G_{26}$} \label{G_{26} 3-tups}
	The group $G_{26}$ of order 1296 is also denoted $W(M_3)$ with degrees $\deg(G_{26}) = \{6, 12, 18\}$; see \cite[Table D.3]{ld2009}.
	In $G_{26}$ there are three conjugacy classes of reflections, corresponding to nontrivial eigenvalues in $\{\zeta_3, \zeta_3^2, -1\}$. In this group there are six sets of eigenvalues for the inverse product, which correspond to three inverse pairs, so we only list one from each pair.  
	\begin{table}[h!]
		\centering
		\begin{tabular}{|c|c|c|} 
			\hline 
			\multicolumn{3}{|c|}{$G_{26}$ types} \rule[0pt]{0pt}{3ex}\\ 
			\hline
			Type & Tuple eigenvalues & Residues of inverse product  \rule[0pt]{0pt}{3ex}\\ [0.5ex] 
			\hline
			A & $(w,w^2, -1)$ & $\{\frac{1}{12}, \frac{7}{12}, \frac{5}{6}\}$  \rule[0pt]{0pt}{3ex} \\ [0.5ex] 
			\hline 
			B & $(w, w, -1)$ & $\left\{ \frac{5}{18}, \frac{11}{18},  \frac{17}{18} \right\}$  \rule[0pt]{0pt}{3ex}\\ [0.5ex] \hline
			C & $(w^2, w^2, -1)$ & $\{ \frac{1}{6}, \frac{1}{6}, \frac{5}{6}\}$  \rule[0pt]{0pt}{3ex}\\ [0.5ex]\hline
		\end{tabular} \caption{\label{G_{26} Types}Here $w = \zeta_{3}$.}
	\end{table}
        
	The exemplars for this group are all provided over $\Q(\zeta_{36})$ with $z = \zeta_{36}$.
	
	Here is an exemplar of type A:
	\begin{align*}
		A_1 = \begin{pmatrix}1&0&0\\ -1&2\cdot z^6&z^6\\ -z^6+2&-3\cdot z^6&-z^6\end{pmatrix}, \quad A_2 = \begin{pmatrix}0&z^6&0\\ 1&-z^6+1&0\\ z^6-2&z^6+1&1\end{pmatrix}, \quad A_3 = \begin{pmatrix}1&0&0\\ 0&1&1\\ 0&0&-1\end{pmatrix}.
	\end{align*}
	Here is an exemplar for type B: 
	\begin{align*}
		B_1 = \begin{pmatrix}z^6-1&2\cdot z^6&z^6\\ 0&1&0\\ 0&0&1\end{pmatrix}, \quad 	B_2 = \begin{pmatrix}0&z^6-1&z^6-1\\ z^6&2&1\\ -z^6-1&z^6-2&z^6-1\end{pmatrix}, \quad B_3 = \begin{pmatrix}1&0&0\\ 0&1&1\\ 0&0&-1\end{pmatrix}.
	\end{align*}
	Here is an exemplar for type C: 
	\begin{align*}
		C_1= \begin{pmatrix}1&0&0\\ -z^6&z^6-1&z^6-1\\ z^6+1&-3\cdot \left(z^6+1\right)&-2\cdot \left(z^6-1\right)\end{pmatrix}, \quad C_2 = \begin{pmatrix}1&-z^6+1&-z^6+1\\ 0&1&0\\ 0&-z^6-1&-z^6\end{pmatrix}, \quad C_3 = \begin{pmatrix}1&0&0\\ 0&1&1\\ 0&0&-1\end{pmatrix}. 
	\end{align*}
	
	In $G_{26}$ neither the choice of different exemplars nor the inverses for any type produce new orbits. The middle convolution data for this group can be found in Table~\ref{G_{26} Nice 3-Tuples}. Further, all three orbits from type A are distinct, as are all three orbits from type B. When comparing with Tykhyy's classification, there were two orbit (from type B) that we could not find an exact match for. We listed the closest match, distinguishing the differing coordinate with an asterisk. Note that both cases agree with orbit 5 in \cite[Table 1]{Tyk} when the formulas are read modulo 2 and up to sign, as Tykhyy intended. 
	
	In this group we also find nice 4-tuples; details are provided in Section~\ref{G_{26} 4-tups}

        \begin{sidewaystable}
	  \centering 
	    \begin{tabular}{|c|c|c|c|c|c|c|c|c|} 
			\hline
			$\lambda$ & $\omega_{X, Y, Z, 4}$ &  Lisovyy--Tykhyy $\#$ & $\theta_{1, 2, 3, 4}$ & $\sigma_{12, 23, 13, 24}$   & \begin{turn}{-90} Tykhyy $\#$ \end{turn}  & \begin{turn}{-90} Their orbit size \end{turn} & \begin{turn}{-90} Our orbit size  \end{turn} & \begin{turn}{-90} $\SL_2(\C)$ subgroup \vspace{0.5ex} \end{turn} \rule[0pt]{0pt}{0ex}\\ [0ex] 
			\hline 
			\multicolumn{9}{|c|}{$G_{26}$ type A comparisons} \rule[0pt]{0pt}{3ex}\\ 
			\hline
			$z^3$ & $(0, \sqrt{3}, 0, 0)$& Type II, $X' = 0, X'' = \sqrt{3}$ & $( \frac{5}{12}, \frac{1}{4}, \frac{5}{12}, \frac{3}{4})$ & $(\frac{1}{2}, \frac{1}{2}, \frac{5}{6}, \frac{1}{2})$  & 4 &  2 & 24 & 0 \rule[0pt]{0pt}{4ex} \\ [1ex] 
			\hline 
			$-z^3$ & $(0, -\sqrt{3}, 0, 0)$ & Type II, $X' = 0, X'' = -\sqrt{3}$ & $(\frac{1}{4}, \frac{1}{12}, \frac{1}{4}, \frac{11}{12})$ & $(\frac{1}{2}, \frac{1}{2}, \frac{1}{2} , \frac{5}{6})$ & 4  &  2 & 24 & 0 \rule[0pt]{0pt}{4ex}\\ [1ex] \hline 
			$-z^6+1$ & $(0, -z^9+2z^3, 0, 0)$ & \small{Type II, $X' = 0, X'' = -z^9+2z^3$} & $(\frac{1}{6}, \frac{1}{2}, \frac{2}{3}, \frac{1}{2})$ & $(\frac{1}{2}, \frac{1}{2}, , \frac{5}{6}, \frac{1}{2} )$ & 3  & 2 & 24 & \small{$\inn{24, 4}$} \rule[0pt]{0pt}{4ex}\\ [1ex] \hline 
			\multicolumn{9}{|c|}{$G_{26}$ type B comparisons} \rule[0pt]{0pt}{3ex}\\ 
			\hline
			$z^{10}$ & \small{$(-z^9+2z^3, -z^9+2z^3, 2, -1)$} & Type III, $\omega = -z^9+2z^3$ & $(\frac{7}{9},\frac{7}{18}, \frac{7}{18}, \frac{2}{3})$ & $( \frac{1}{2}, \frac{1}{3}, \frac{1}{2}, \frac{5}{6}^*)$ & $5^*$ & 3 & 36 & 0 \rule[0ex]{0ex}{4ex} \\ [1ex]
			\hline
			$-z^{4}$ & \small{$(z^9-2z^3, z^9-2z^3, 2, -1)$} & Type III, $\omega = z^9-2z^3$ & $(\frac{1}{9},\frac{1}{18}, \frac{1}{18}, \frac{2}{3})$ & $(\frac{1}{2}, \frac{1}{3}, \frac{1}{2}, \frac{1}{6})$ & 5 & 3 & 36 & 0 \rule[0ex]{0ex}{4ex} \\ [1ex]
			\hline
			$-z^{16}$ & \small{$(-z^9+2z^3, -z^9+2z^3, 2, -1)$} & Type III, $\omega = -z^9+2z^3$ & $(\frac{5}{9}, \frac{5}{18}, \frac{5}{18}, \frac{2}{3})$ & $(\frac{1}{2}, \frac{1}{3}, \frac{1}{2}, \frac{1}{6}^*)$ & $5^*$ & 3 & 36 & 0 \rule[0ex]{0ex}{4ex} \\ [1ex]
			\hline
			\multicolumn{9}{|c|}{$G_{26}$ type C comparisons} \rule[0pt]{0pt}{3ex}\\ 
			\hline
			$-z^6+1$ & $(0, 0, 2, -1)$ & Type III, $\omega = 0$ & $(1, \frac{1}{2}, \frac{1}{2}, \frac{2}{3})$ & $( \frac{1}{2}, \frac{1}{3}, \frac{1}{2}, \frac{1}{2})$ & 5 & 3 & 12 & $\inn{12, 1}$ \rule[0ex]{0ex}{4ex} \\ [1ex]  \hline
		\end{tabular}
                \caption{\label{G_{26} Nice 3-Tuples Comparisons}Here $w = \zeta_3, z = \zeta_{36}$. The subgroups $\inn{24, 4}$ and $\inn{12, 1}$ are the dicyclic groups of order 24 and 12, respectively. 
                  The signature for Tykhyy's orbit $5$ is $\theta_{1, 2, 3, 4} = (2x, x, x, \frac{2}{3})$ and $\sigma_{12, 23, 13, 24} = (\frac{1}{2}, \frac{1}{3}, \frac{1}{2}, 3x)$, but such a signature was not in the orbit, so we provide the closest match. The coordinate that differs from the formula is marked with an asterisk, but note that as $\sigma_{24}$ is only defined modulo 2 and up to sign, we believe this still corresponds to Orbit 5 of Tykhyy's Table 1. }
	\end{sidewaystable}     
 
	\clearpage
	
	\subsubsection{$G_{27}$}	\label{G_{27} 3-tups}
	This group is of order 2160 and is also denoted $W(J_3(5))$.
	This group only has one conjugacy class of reflections with nontrivial eigenvalue $-1$, and $\deg(G_{27}) = \{6, 12, 30\}$. The original base ring of this group in Magma is the cyclotomic field of order 15, and we expanded the base ring to the cyclotomic filed of order 60 to ensure all the eigenvalues are found. We found three distinct inverse pair types in this group.
	\begin{table}[h!]
		\centering
		\begin{tabular}{|c|c|c|} 
			\hline 
			\multicolumn{3}{|c|}{$G_{27}$ types} \rule[0pt]{0pt}{3ex}\\ 
			\hline
			Type & Tuple eigenvalues & Residues of inverse product  \rule[0pt]{0pt}{3ex}\\ [0.5ex] 
			\hline
			A & $(-1, -1 -1)$ & $\{\frac{2}{60}, \frac{38}{60}, \frac{50}{60}\}$  \rule[0pt]{0pt}{3ex} \\ [0.5ex] 
			\hline 
			B & $(-1, -1 -1)$ & $\left\{ \frac{5}{60}, \frac{35}{60},  \frac{50}{60} \right\}$  \rule[0pt]{0pt}{3ex}\\ [0.5ex] \hline
			C & $(-1, -1, -1)$ & $\{ \frac{10}{60}, \frac{34}{60}, \frac{46}{60}\}$  \rule[0pt]{0pt}{3ex}\\ [0.5ex]\hline
		\end{tabular} \caption{\label{G_{27} Types}}
	\end{table}
	
	Writing $z = \zeta_{60}$ and setting $u = -z^{10}-z^8+z^2$ and $v = z^{16} + z^4$, we get that $z^{50}v = -z^{14}+ z^6+ z^4$, so we can write an exemplar for type A as 
	\begin{align*}
		A_1 = \begin{pmatrix}z^{50}v&0&z^{20}\\ -v+1&1&v+1\\ v&0&-z^{50}v\end{pmatrix}, \quad A_2 = \begin{pmatrix}0&0&-z^{20}\\ u+z^{10}+2&1&v+z^{10}\\ z^{10}&0&0\end{pmatrix}, \quad A_3 = \begin{pmatrix}1&1&0\\ 0&-1&0\\ 0&1&1\end{pmatrix}.
	\end{align*}
	Using the same notation, we can write a type B exemplar as 
	\begin{align*}
		B_1 = \begin{pmatrix}-1&u&u\\ v+1&-u&-u-1\\ -v-1&u+1&u+2\end{pmatrix}, \quad B_2 = \begin{pmatrix}z^{50}v&0&z^{20}\\ -v+1&1&v+1\\ v&0&-z^{50}v\end{pmatrix}, \quad B_3 = \begin{pmatrix}1&1&0\\ 0&-1&0\\ 0&1&1\end{pmatrix}.
	\end{align*}
	Similarly, a type C exemplar is
	\begin{align*}
		C_1 = \begin{pmatrix}z^{50}v&0&z^{20}\\ -v+1&1&v+1\\ v&0&-z^{50}v\end{pmatrix}, \quad C_2= \begin{pmatrix}1&1&0\\ 0&-1&0\\ 0&1&1\end{pmatrix}, \quad  C_3 = \begin{pmatrix}1&0&0\\ -v-1&u+1&u+2\\ v+1&-u&-u-1\end{pmatrix}.
	\end{align*}
	In $G_{27}$ for all three types, different exemplars do not produce new orbits. Moreover, the inverse types do not produce new orbits, and we have checked that these nine orbits are distinct. The middle convolution details for types A, B, and C are listed in Tables~\ref{G_{27} Type A 3-Tuples}, \ref{G_{27} Type B 3-Tuples}, and~\ref{G_{27} Type C 3-Tuples}, respectively.  The comparison data is provided in Table~\ref{G_{27} Type Comparisons}. In this group we also find nice 4-tuples; details are provided in Section~\ref{G_{27} 4-tups}
	\begin{table}
		\centering
		\begin{tabular}{|c|c|c|c|c|c|c|c|c|} 
			\hline
			$\lambda$ & $\omega_{X, Y, Z, 4}$ & \begin{sideways} Lisovyy--Tykhyy $\#$ \end{sideways} & $\theta_{1, 2, 3, 4}$ & $\sigma_{12, 23, 13, 24}$   & \begin{sideways} Tykhyy $\#$ \end{sideways}  & \begin{sideways} Their orbit size \end{sideways} & \begin{sideways} Our orbit size  \end{sideways} & \begin{sideways} $\SL_2(\C)$ subgroup \vspace{1ex}  \end{sideways} \rule[0pt]{0pt}{4ex}\\ [0.5ex] 
			\hline 
			\multicolumn{9}{|c|}{$G_{27}$ type A comparisons} \rule[0pt]{0pt}{3ex}\\ 
			\hline
			$z^2$ & $(\frac{3-\sqrt{5}}{2}, \frac{3-\sqrt{5}}{2}, \frac{3-\sqrt{5}}{2}, \sqrt{5}-1)$& 26 & $(\frac{8}{15},  \frac{7}{15}, \frac{4}{5}, \frac{7}{15})$ & $(\frac{2}{5}, \frac{2}{5}, \frac{1}{2}, \frac{1}{2})$  & 78 &  15 & 60 & 0 \rule[0pt]{0pt}{4ex} \\ [1ex] 
			\hline 
			$-z^8$ & $(\frac{3-\sqrt{5}}{2}, \frac{3-\sqrt{5}}{2}, \frac{3-\sqrt{5}}{2}, \sqrt{5}-1)$ & 26 & $(  \frac{13}{15},  \frac{1}{5}, \frac{2}{15}, \frac{2}{15})$ & $(\frac{3}{5}, \frac{2}{5}, \frac{1}{2} , \frac{1}{2})$ & 87  & 15 & 60 & 0 \rule[0pt]{0pt}{4ex}\\ [1ex] \hline 
			$-z^{20}$ & $(\frac{3-\sqrt{5}}{2}, \frac{3-\sqrt{5}}{2}, \frac{3-\sqrt{5}}{2}, \sqrt{5}-1)$ & 26 & $(\frac{1}{3}, \frac{1}{3}, \frac{3}{5}, \frac{1}{3})$ & $(\frac{3}{5}, \frac{3}{5}, \frac{1}{2} , \frac{1}{2})$ & 81  & 15 & 60 & $\inn{120, 5}$ \rule[0pt]{0pt}{4ex}\\ [1ex] \hline 
			\multicolumn{9}{|c|}{$G_{27}$ type B comparisons} \rule[0pt]{0pt}{3ex}\\ 
			\hline
			$z^5$ & (1, 1, 1, 1) & 39 & $(\frac{7}{12}, \frac{5}{12}, \frac{7}{12}, \frac{1}{4})$ & $(\frac{2}{5}, \frac{1}{2}, \frac{1}{3}, \frac{1}{2})$ & 113 & 24 & 96 & 0 \rule[0ex]{0ex}{4ex} \\ [1ex]
			\hline
			$-z^{5}$ & (1, 1, 1, 1) & 39 & $(\frac{3}{4}, \frac{1}{12}, \frac{1}{12}, \frac{1}{12})$ & $(\frac{1}{2}, \frac{3}{5}, \frac{1}{3}, \frac{1}{2})$ & 114 & 24 & 96 & 0 \rule[0ex]{0ex}{4ex} \\ [1ex]
			\hline
			$-z^{20}$ & (1, 1, 1, 1) & 39 & $(\frac{1}{2},\frac{1}{3}, \frac{2}{3}, \frac{1}{3})$ & $(\frac{1}{2},  \frac{2}{5}, \frac{1}{2}, \frac{1}{3})$ & 112 & 24 & 96 & $\inn{120, 5}$ \rule[0ex]{0ex}{4ex} \\ [1ex]
			\hline
			\multicolumn{9}{|c|}{$G_{27}$ type C comparisons} \rule[0pt]{0pt}{3ex}\\ 
			\hline
			$z^{10}$ & $(\frac{3+\sqrt{5}}{2}, \frac{3+\sqrt{5}}{2}, \frac{3+\sqrt{5}}{2}, -1-\sqrt{5})$ & 27 & $(\frac{4}{5}, \frac{2}{3}, \frac{1}{3}, \frac{1}{3})$ & $(\frac{1}{5}, \frac{4}{5}, \frac{1}{2}, \frac{1}{2})$ & 84 & 15 & 60 & $\inn{120, 5}$ \rule[0ex]{0ex}{4ex} \\ [1ex] \hline
			$-z^4$ & $(\frac{3+\sqrt{5}}{2}, \frac{3+\sqrt{5}}{2}, \frac{3+\sqrt{5}}{2}, -1-\sqrt{5})$ & 27 & $(\frac{14}{15}, \frac{14}{15}, \frac{3}{5}, \frac{1}{15})$ & $ (\frac{1}{5}, \frac{4}{5}, \frac{1}{2}, \frac{1}{2})$ & 83 & 15 & 60 & 0   \rule[0ex]{0ex}{4ex} \\ [1ex] \hline
			$-z^{16}$ & $(\frac{3+\sqrt{5}}{2}, \frac{3+\sqrt{5}}{2}, \frac{3+\sqrt{5}}{2}, -1-\sqrt{5})$ & 27 & $(\frac{4}{15}, \frac{4}{15},  \frac{2}{5}, \frac{4}{15})$ & $ (\frac{1}{5}, \frac{1}{5}, \frac{1}{2}, \frac{1}{2})$ & 82 & 15 & 60 & 0   \rule[0ex]{0ex}{4ex} \\ [1ex] \hline
		\end{tabular} \caption{\label{G_{27} Type Comparisons}Here $z = \zeta_{60}$. All the finite subgroups correspond to $\SL(2, 5)$.}
	\end{table}
	
	\clearpage
	\subsection{Nice 4-tuples in rank 3 groups}\label{prim4}
	Nice tuples in this group are 4-tuples of reflections generating the whole group such that the inverse product has a nontrivial eigenvalue of multiplicity 2. If $\Q(\zeta)$ denotes the trace field for some FCRG $W$ and we have a matrix $A$ with eigenvalues $\{\lambda, \lambda, \lambda_1\}$, then we know $\tr(A) = 2\lambda + \lambda_1 \in \Q(\zeta)$. If we further know that $\lambda \in \Q(\zeta)$, then we can conclude that $\lambda_1 \in \Q(\zeta)$ and so we do not need to extend the base field to ensure all characteristic polynomials of such matrices $A$ split. By Theorem~\ref{ReflEigvThm}, we know that any such $\lambda$ of multiplicity 2 must have order dividing at least two of the degrees of $W$. Studying degrees, we see that in $G_{23}$ and $G_{24}$, $\lambda$ must have order dividing 2, \textit{i.e.}~$\lambda = -1$, and so we do not need to expand the base ring. Similarly, for $G_{25}$, $G_{26}$, $G_{27}$, such  $\lambda$ must have order dividing 6, which means we do not have to expand the base ring in these cases either. 
	Our search found nice 4-tuples in all the primitive rank 3 groups except for~$G_{24}$. 
	
	In comparing with the existing classifications, we can only compare with Tykhyy's $\mathcal{M}^{(5)}$ orbits, see \cite{Tyk}, since our 4-tuples give us local systems on the 5-punctured sphere. Unfortunately, without the $\omega$-parameters from the work of Lisovyy--Tykhyy \cite{T-L} like in the 3-tuple cases, it is more difficult to relate our orbit sizes to Tykhyy's. We still computed all the signatures for the different possible equivalent representatives for each orbit and attempted to find a match, which was successful. 
	
	\subsubsection{$G_{23}$} \label{G_{23} 4-tups}
	We only found one type of nice tuple. 
	\begin{table}[h!]
		\centering
		\begin{tabular}{|c|c|c|} 
			\hline 
			\multicolumn{3}{|c|}{$G_{23}$ 4-tuple types} \rule[0pt]{0pt}{3ex}\\ 
			\hline
			Type & Tuple eigenvalues & Residues of inverse product  \rule[0pt]{0pt}{3ex}\\ [0.5ex] 
			\hline
			A & $(-1, -1, -1, -1)$ & $\{\frac{1}{2}, \frac{1}{2}, 0\}$  \rule[0pt]{0pt}{3ex} \\ [0.5ex] 
			\hline 
		\end{tabular} \caption{\label{G_{23}}}
	\end{table}

	Letting $q = \zeta_5$, here is an exemplar for type A:  
	\begin{align*}
		A_1 &= \begin{pmatrix}q^3+q^2&q^3+q^2&q^3+q^2\\ -q^3-q^2+1&-q^3-q^2+1&-q^3-q^2\\ q^3+q^2&-1&0\end{pmatrix}, \quad A_2 = \begin{pmatrix}-q^3-q^2+1&-q^3-q^2+1&-q^3-q^2\\ q^3+q^2&q^3+q^2&q^3+q^2\\ -1&q^3+q^2&0\end{pmatrix}, \\ A_3 &= \begin{pmatrix}-q^3-q^2&1&0\\ q^3+q^2&q^3+q^2&0\\ -q^3-q^2&-q^3-q^2+1&1\end{pmatrix}, \quad A_4 = \begin{pmatrix}-1&0&0\\ -q^3-q^2&1&0\\ 0&0&1\end{pmatrix}.
	\end{align*}
	The middle convolution details can be found in Table~\ref{G_{23} Nice 4-Tuples}. This type is its own inverse. 
	For this tuple, we could not compute the orbit size as it was too large, but we were able to find a signature in the orbit (up to Tykhyy's equivalence relations) that agreed with one listed in Tykhyy's paper. Because we cannot compute the full braid group orbit, we also cannot check if different exemplars would result in a different orbit after middle convolution. We provide the comparison details in Table~\ref{G_{23} 4-Tuple Comparisons}. 
	\begin{table}[H]
		\small \centering
		\begin{tabular}{|c|c|c|c|c|c|c|c|} 
			\hline
			Type & $\lambda$ &  $\theta_{1, 2, 3, 4, 5}$ & $\sigma_{12, 23, 34, 45, 51, 13, 24}$   & \begin{sideways} Tykhyy $\#$ \end{sideways}  & \begin{sideways} Their orbit size \end{sideways} & \begin{sideways} Our orbit size  \end{sideways} & \begin{sideways} $\SL_2(\C)$ subgroup \vspace{0.5ex} \end{sideways} \rule[0pt]{0pt}{4ex}\\ [0.5ex] 
			\hline 
			\multicolumn{8}{|c|}{$G_{23}$ type comparisons} \rule[0pt]{0pt}{3ex}\\ 
			\hline
			A& $-1$ & $(1, 1, 1, 1, 1)$ & $(\frac{1}{3}, \frac{1}{3}, \frac{1}{3}, \frac{1}{3}, \frac{1}{3}, \frac{1}{5}, \frac{1}{5})$  &8 &  192 &  - & 0  \rule[0pt]{0pt}{4ex} \\ [1ex] 
			\hline
		\end{tabular} \caption{\label{G_{23} 4-Tuple Comparisons}}
	\end{table}
	
	\subsubsection{$G_{25}$}\label{G_{25} 4-tups}
	We found six different types which split into three inverse pairs, so we only provide the details for one type from each pair in Table~\ref{G_{25} 4-Tuple Types}. 
	\begin{table}[H]
		\centering
		\begin{tabular}{|c|c|c|} 
			\hline 
			\multicolumn{3}{|c|}{$G_{25}$ 4-tuple types} \rule[0pt]{0pt}{3ex}\\ 
			\hline
			Type & Tuple eigenvalues & Residues of inverse product  \rule[0pt]{0pt}{3ex}\\ [0.5ex] 
			\hline
			A & $(z, z^2, z^2, z^2)$ & $\{\frac{1}{3}, \frac{1}{3}, 0\}$  \rule[0pt]{0pt}{3ex} \\ [0.5ex] 
			\hline 
			B & $(z^2, z^2, z^2, z^2)$ & $\left\{ \frac{1}{3}, \frac{1}{3},  \frac{2}{3} \right\}$  \rule[0pt]{0pt}{3ex}\\ [0.5ex] \hline
			C & $(z^2, z, z, z^2)$ & $\{ \frac{2}{3}, \frac{1}{6}, \frac{1}{6}\}$  \rule[0pt]{0pt}{3ex}\\ [0.5ex]\hline
		\end{tabular} \caption{\label{G_{25} 4-Tuple Types} Here $z = \zeta_3$.}
	\end{table}
	
	For all the exemplars listed below, $z = \zeta_3$. 
	\newline
	Here is an exemplar for type A: 
	\begin{align*}
		A_1 = \begin{pmatrix}z&0&0\\ z^2&1&0\\ 0&0&1\end{pmatrix},\quad A_2 = \begin{pmatrix}-z&1&-z\\ 0&1&0\\ z&z^2&0\end{pmatrix}, \quad A_3 = \begin{pmatrix}1&z&0\\ 0&z^2&0\\ 0&z&1\end{pmatrix} \quad A_4 = \begin{pmatrix}-z&1&z^2\\ 1&z+1&1\\ z^2&1&-z\end{pmatrix}.
	\end{align*}
	Here is an exemplar for type B: 
	\begin{align*}
		B_1 = \begin{pmatrix}-z&1&0\\ z+1&0&0\\ 1&z&1\end{pmatrix},\quad B_2 = \begin{pmatrix}-z&1&-z\\ 0&1&0\\ z&z^2&0\end{pmatrix},\quad B_3 = \begin{pmatrix}0&z^2&z\\ 0&1&0\\ -z&1&-z\end{pmatrix}, \quad B_4 = \begin{pmatrix}1&z&0\\ 0&z^2&0\\ 0&z&1\end{pmatrix}.
	\end{align*}
	Here is an exemplar for type C:
	\begin{align*}
		C_1 = \begin{pmatrix}1&z&1\\ 0&0&z+1\\ 0&1&-z\end{pmatrix}, \quad C_2 = \begin{pmatrix}z&0&0\\ z^2&1&0\\ 0&0&1\end{pmatrix}, \quad C_3 = \begin{pmatrix}z+1&-1&z+1\\ 0&1&0\\ z^2&-z&0\end{pmatrix}, \quad C_4 = \begin{pmatrix}-z&1&0\\ z+1&0&0\\ 1&z&1\end{pmatrix}.
	\end{align*}
	The middle convolution details for $G_{25}$ can be found in Table~\ref{G_{25} Nice 4-Tuples}.
	The orbits for types A and~B are the same. All three types do not split into new orbits, and their inverses do not produce new orbits. The comparison details for the two distinct orbits can be found in Table~\ref{G_{25} 4-Tuple Comparisons}.

        \clearpage
	\begin{table}
		\centering
		\begin{tabular}{|c|c|c|c|c|c|c|c|} 
			
			\hline
			Type & $\lambda$ &  $\theta_{1, 2, 3, 4, 5}$ & $\sigma_{12, 23, 34, 45, 51, 13, 24}$   & \begin{sideways} Tykhyy $\#$ \end{sideways}  & \begin{sideways} Their orbit size \end{sideways} & \begin{sideways} Our orbit size  \end{sideways} & \begin{sideways} $\SL_2(\C)$ subgroup \vspace{0.5ex} \end{sideways} \rule[0pt]{0pt}{4ex}\\ [0.5ex] 
			\hline 
			\multicolumn{8}{|c|}{$G_{25}$ type comparisons} \rule[0pt]{0pt}{3ex}\\ 
			\hline
			A& $z$ & $(0, 0, 0, 0, \frac{2}{3})$ & $(0, 0, \frac{1}{3}, 0, \frac{1}{2}, 0, \frac{1}{3})$  &3 &  9 &  45 & 0  \rule[0pt]{0pt}{4ex} \\ [1ex] 
			\hline 
			C & $z+1$ & $(\frac{1}{2}, \frac{1}{6}, \frac{2}{3}, \frac{1}{6}, \frac{1}{2})$& $(\frac{1}{2}, \frac{5}{6}, \frac{5}{6}, \frac{1}{2}, 1, \frac{1}{2}, \frac{1}{3})$ & 1 & 4 &  120 & $\inn{24, 4}$  \rule[0ex]{0ex}{4ex} \\ [1ex]
			\hline
		\end{tabular} \caption{\label{G_{25} 4-Tuple Comparisons}Here $z = \zeta_{3}$. The subgroup of order 24 is the dicyclic group.}
	\end{table}

	\subsubsection{$G_{26}$}\label{G_{26} 4-tups}
	In this group we found two inverse pairs of nice tuples, and we provide the details for one type from each pair in Table~\ref{G_{26} 4-Tuple Types}. 

        \begin{table}[H]
		\centering
		\begin{tabular}{|c|c|c|} 
			\hline 
			\multicolumn{3}{|c|}{$G_{26}$ 4-tuples types} \rule[0pt]{0pt}{3ex}\\ 
			\hline
			Type & Tuple eigenvalues & Residues of inverse product  \rule[0pt]{0pt}{3ex}\\ [0.5ex] 
			\hline
			A & $(-1, z, z^2, z^2)$ & $\{\frac{1}{6}, \frac{1}{6}, \frac{1}{2}\}$  \rule[0pt]{0pt}{3ex} \\ [0.5ex] 
			\hline 
			B & $(z, z^2, z, -1)$ & $\{\frac{1}{6}, \frac{1}{6}, \frac{5}{6}\}$  \rule[0pt]{0pt}{3ex} \\ [0.5ex]
			\hline 
		\end{tabular} \caption{\label{G_{26} 4-Tuple Types} Here $z = \zeta_3$.}
	\end{table}
	With the notation $z = \zeta_3$, here is an exemplar for type A: 
	\begin{align*}
		A_1 = \begin{pmatrix}z&z+2&1\\ 0&1&0\\ z+2&z^2-z&-z\end{pmatrix}, \quad A_2 = \begin{pmatrix}z&0&0\\ z^2&1&0\\ 0&0&1\end{pmatrix}, \quad A_3 = \begin{pmatrix}-z+1&2z^2&z^2\\ z^2&-1&-1\\ 0&0&1\end{pmatrix}, \quad A_4 = \begin{pmatrix}-z&-1&-1\\ z&z+2&z+1\\ z^2-z&z^2-1&z^2\end{pmatrix}.
	\end{align*}
	An exemplar for type B is 
	\begin{align*}
		B_1 = \begin{pmatrix}1&z^2&0\\ 0&z&0\\ 0&-z+1&1\end{pmatrix}, \quad B_2 = \begin{pmatrix}-z&-1&0\\ z^2&0&0\\ 2z+1&z+2&1\end{pmatrix}, \quad B_3 = \begin{pmatrix}z&0&0\\ z+1&1&0\\ 0&0&1\end{pmatrix}, \quad B_4 = \begin{pmatrix}1&z^2-z&-z\\ 0&z^2&z^2\\ 0&2z+1&z+1\end{pmatrix}.
	\end{align*}
	The middle convolution details for $G_{26}$ can be found in Table~\ref{G_{26} Nice 4-Tuples}. 
	Neither of these types split into multiple orbits, nor do their inverse types produce new orbits. The finite group generated by the matrices in $\SL_2(\C)$ are the same for both groups. Magma identifies this as \texttt{SmallGroup}$<24, 4>$, which corresponds to the dicyclic group of order 24.

        \clearpage
	\begin{table}
		\centering
		\begin{tabular}{|c|c|c|c|c|c|c|c|} 
			\hline
			Type & $\lambda$ &  $\theta_{1, 2, 3, 4, 5}$ & $\sigma_{12, 23, 34, 45, 51, 13, 24}$   & \begin{sideways} Tykhyy $\#$ \end{sideways}  & \begin{sideways} Their orbit size \end{sideways} & \begin{sideways} Our orbit size  \end{sideways} & \begin{sideways} $\SL_2(\C)$ subgroup \vspace{0.5ex} \end{sideways} \rule[0pt]{0pt}{4ex}\\ [0.5ex] 
			\hline 
			\multicolumn{8}{|c|}{$G_{26}$ type comparisons} \rule[0pt]{0pt}{3ex}\\ 
			\hline
			A& $z+1$ & $(\frac{1}{2}, \frac{1}{6}, \frac{1}{6}, \frac{1}{3}, \frac{1}{2})$ & $(\frac{1}{2}, \frac{1}{3}, \frac{1}{2}, \frac{1}{2}, \frac{2}{3}, \frac{1}{2}, \frac{1}{2})$  &1 &  4 &  120 & $\inn{24, 4}$  \rule[0pt]{0pt}{4ex} \\ [1ex] 
			\hline 
			B & $z+1$ & $(\frac{1}{2}, \frac{1}{6},  \frac{2}{3}, \frac{1}{6}, \frac{1}{2})$ & $(\frac{1}{2}, \frac{5}{6}, \frac{5}{6}, \frac{1}{2}, 1, \frac{1}{2}, \frac{1}{3})$ & 1 & 4 &  240 & $\inn{24, 4}$  \rule[0ex]{0ex}{4ex} \\ [1ex]
			\hline
		\end{tabular} \caption{\label{G_{26} 4-Tuple Comparisons}Here $z = \zeta_{3}$. The subgroup of order 24 is the dicyclic group.}
	\end{table}
        	
	\subsubsection{$G_{27}$} \label{G_{27} 4-tups}
	In this group we found one inverse pair, and we provide the details for one type of the pair. 
	\begin{table}[h!]
		\centering
		\begin{tabular}{|c|c|c|} 
			\hline 
			\multicolumn{3}{|c|}{$G_{27}$ 4-tuples types} \rule[0pt]{0pt}{3ex}\\ 
			\hline
			Type & Tuple eigenvalues & Residues of inverse product  \rule[0pt]{0pt}{3ex}\\ [0.5ex] 
			\hline
			A & $(-1,-1, -1, -1)$ & $\{\frac{5}{6}, \frac{5}{6}, \frac{1}{3}\}$  \rule[0pt]{0pt}{3ex} \\ [0.5ex] 
			\hline  
		\end{tabular} \caption{\label{G_{27} 4-Tuple Types} Here $z = \zeta_{15}$.}
	\end{table}
	
	Here is an exemplar of type A, where $z = \zeta_{15}$, $u = z^4 + z$, $v = z^7-z^3+z^2-1$: 
	\begin{align*}
		A_1 &= \begin{pmatrix}u&u-1&-1\\ -u-1&u&-v-u\\ 0&0&1\end{pmatrix}, \quad A_2 = \begin{pmatrix}-1&0&0\\ 1&1&0\\ -u&0&1\end{pmatrix}, \quad A_3 = \begin{pmatrix}u+2v&v+z^{10}&-u+z^5\left(z^5-1\right)\\ 0&1&0\\ -u-1&-v-u-1&-2v-u\end{pmatrix}, \\ A_4 &= \begin{pmatrix}-v-u-z^5&-v-z^5&u+1\\ v+u+z^5&v&-u+z^{10}\\ -v-u+1&-v-z^{10}&u-z^{10}\end{pmatrix}.
	\end{align*}
	
	The middle convolution details can be found in Table~\ref{G_{27} Nice 4-Tuples}. In this case we could not compute the orbit size since it was too long. Similar to $G_{23}$, we were able to compute some signatures for tuples in the orbit, and we found a match with a signature listed in Tykhyy's classification.
	\begin{table}
		\centering
		\begin{tabular}{|c|c|c|c|c|c|c|c|} 
			\hline
			Type & $\lambda$ &  $\theta_{1, 2, 3, 4, 5}$ & $\sigma_{12, 23, 34, 45, 51, 13, 24}$   & \begin{sideways} Tykhyy $\#$ \end{sideways}  & \begin{sideways} Their orbit size \end{sideways} & \begin{sideways} Our orbit size  \end{sideways} & \begin{sideways} $\SL_2(\C)$ subgroup \vspace{0.5ex} \end{sideways} \rule[0pt]{0pt}{4ex}\\ [0.5ex] 
			\hline 
			\multicolumn{8}{|c|}{$G_{27}$ type comparisons} \rule[0pt]{0pt}{3ex}\\ 
			\hline
			A& $-z^5$ & $(\frac{1}{3}, \frac{1}{3}, \frac{1}{3}, \frac{1}{3}, \frac{1}{3})$ & $(0, \frac{1}{5}, \frac{1}{3}, \frac{1}{3}, \frac{1}{5}, \frac{3}{5}, \frac{1}{3})$  & 87 &  432 &  - & $\inn{120, 5}$  \rule[0pt]{0pt}{4ex} \\ [1ex] 
			\hline 
		\end{tabular} \caption{\label{G_{27} 4-Tuple Comparisons}Here $z = \zeta_{15}$. The subgroup of order 120 is $\SL(2, 5)$.}
	\end{table}
	
	\clearpage
	\subsection{Nice 4-tuples in rank 4 groups}
	In this case we are looking for nice tuples of four reflections generating the whole group such that their inverse product has a nontrivial eigenvalue of $T-2 = 2$. Of the five rank 4 reflection groups, we found nice 4-tuples in $G_{28}$, $G_{30}$, and $G_{32}$. In the case of $G_{32}$, the field containing all the eigenvalues would be $\Q(\zeta_{360})$, which is much too large to perform the computations over. For $G_{32}$, in Section~\ref{G32 4-tups} we justify why it is enough to search for such tuples over $\Q(\zeta_{12})$. 
	
	After we take the middle convolution, multiply with a suitable character, and adjoin the inverse product, our tuples become $\SL_2(\C)$ local systems on the 5-punctured sphere. We found that in some cases, one of the five matrices in the local system ends up being the identity, which means our tuple comes from some local system on the 4-punctured sphere. In the case where we have four nontrivial matrices, we can compute the $\omega$-parameters in the notation of Lisovyy--Tykhyy \cite{T-L} and identify the corresponding $\mathcal{M}^{(4)}$ orbit in Tykhyy's paper \cite{Tyk}. When all five of the matrices are nontrivial, we can only compare with Tykhyy's $\mathcal{M}^{(5)}$ orbit signatures from \cite{Tyk}.
	
	\subsubsection{$G_{28}$} \label{G_{28} 4-tups}
	The group $G_{28}$ of order 1152 is also denoted $W(F_4)$ with degrees $\deg(G_{28}) = \{2, 6, 8, 12\}$; see \cite[Table D.3]{ld2009}. 
	All reflections in $G_{28}$ have nontrivial eigenvalue $-1$ and form two conjugacy classes of reflections. We only found one type of nice 4-tuples in this group, with two possible choices of $\lambda$-parameters for middle convolution. 
	\begin{table}[H]
		\centering
		\begin{tabular}{|c|c|c|} 
			\hline 
			\multicolumn{3}{|c|}{$G_{28}$ types} \rule[0pt]{0pt}{3ex}\\ 
			\hline
			Type & Tuple eigenvalues & Residues of inverse product  \rule[0pt]{0pt}{3ex}\\ [0.5ex] 
			\hline
			A & $(-1, -1, -1, -1)$ & $\{\frac{1}{6}, \frac{1}{6}, \frac{5}{6}, \frac{5}{6}\}$  \rule[0pt]{0pt}{3ex} \\ [0.5ex] 
			\hline 
		\end{tabular} \caption{\label{G_{28} Types}}
	\end{table}

	\begin{table}[H]
		\centering
		\begin{tabular}{|c|c|c|c|c|c|c|c|c|} 
			\hline
			$\lambda$ & $\omega_{X, Y, Z, 4}$ & \begin{sideways} Lisovyy--Tykhyy $\#$ \end{sideways} & $\theta_{1, 2, 3, 4}$ & $\sigma_{12, 23, 13, 24}$   & \begin{sideways} Tykhyy $\#$ \end{sideways}  & \begin{sideways} Their orbit size \end{sideways} & \begin{sideways} Our orbit size  \end{sideways} & \begin{sideways} $\SL_2(\C)$ subgroup \vspace{0.5ex} \end{sideways} \rule[0pt]{0pt}{4ex}\\ [0.5ex] 
			\hline 
			\multicolumn{9}{|c|}{$G_{28}$ type A comparisons} \rule[0pt]{0pt}{3ex}\\ 
			\hline
			$z^4$ & $(2, 2, 2, -1)$ & Type III, $\omega = 2$ & $(\frac{2}{3}, \frac{1}{3}, \frac{1}{3}, \frac{2}{3})$ & $(\frac{1}{2}, \frac{1}{3}, \frac{1}{2}, 1)$  & 5 &  3 & 9 & $\inn{24, 3}$ \rule[0pt]{0pt}{4ex} \\ [1ex] \hline
		\end{tabular} \caption{\label{G_{28} Type Comparisons}Here $z = \zeta_{24}$, and the subgroup of order 24 is $\SL(2, 3)$.}
	\end{table}
        
	An exemplar for such a tuple of type A is 
	\begin{align*}
		A_1 = \begin{pmatrix}2&3&4&2\\ -1&-2&-4&-2\\ 0&0&1&0\\ 0&0&0&1\end{pmatrix}, \quad A_2 = \begin{pmatrix}1&2&2&0\\ 0&-1&-2&0\\ 0&0&1&0\\ 0&1&1&1\end{pmatrix}, \quad A_3 = \begin{pmatrix}-1&-2&-4&-2\\ 2&3&4&2\\ -1&-1&-1&-1\\ 0&0&0&1\end{pmatrix}, \quad A_4 = \begin{pmatrix}-1&0&0&0\\ 1&1&0&0\\ 0&0&1&0\\ 0&0&0&1\end{pmatrix}.
	\end{align*}	
	A different choice of exemplar does not produce a new orbit, and the two orbits computed above are the same, so we only get one new orbit from this group. The middle convolution details can be found in Table~\ref{G_{28} Nice 4-Tuples}. After putting the matrices in $\SL_2(\C)$, we see that the inverse product of the middle convolution of this tuple is the identity, so this tuple comes from a local system on the 4-punctured sphere. As in the case of 3-tuples, we provide the details comparing with the results of both Lisovyy--Tykhyy and Tykhyy \cite{T-L,Tyk}  in Table~\ref{G_{28} Type Comparisons}. The $\omega$-parameters appear to correspond to Orbit 31, but in fact this orbit is of Type III, as described in \cite[Lemma~39]{T-L}.

	\subsubsection{$G_{30}$} \label{G_{30} 4-tups}
	The group $G_{30}$ of order 14400 is also denoted $W(H_4)$ with degrees $\deg(G_{30}) = \{2, 12, 20, 30\}$; see  \cite[Table D.3]{ld2009}.
	All reflections in $G_{30}$ have nontrivial eigenvalue $-1$ and are conjugate to each other. Looking at the degrees, we extend the base field to $\Q(\zeta_{60})$. We found three distinct types of nice 4-tuples; the details are in Table~\ref{G_{30} Types}.
	\begin{table}[H]
		\centering
		\begin{tabular}{|c|c|c|} 
			\hline 
			\multicolumn{3}{|c|}{$G_{30}$ types} \rule[0pt]{0pt}{3ex}\\ \hline
			Type & Tuple eigenvalues & Residues of inverse product  \rule[0pt]{0pt}{3ex}\\ [0.5ex] 
			\hline
			A & $(-1, -1, -1, -1)$ & $\{\frac{3}{10},\frac{3}{10}, \frac{7}{10}, \frac{7}{10}\}$  \rule[0pt]{0pt}{3ex} \\ [0.5ex] 
			\hline 
			B & $(-1, -1, -1,-1)$ & $\left\{ \frac{1}{6}, \frac{1}{6},  \frac{5}{6}, \frac{5}{6}  \right\}$  \rule[0pt]{0pt}{3ex}\\ [0.5ex] \hline
			C & $(-1, -1, -1, -1)$ & $\{ \frac{1}{10}, \frac{1}{10}, \frac{9}{10}, \frac{9}{10}\}$  \rule[0pt]{0pt}{3ex}\\ [0.5ex]\hline
		\end{tabular} \caption{\label{G_{30} Types}Here $z = \zeta_{60}$.}
	\end{table}
	For all of the exemplars below, $ z = \zeta_{60}$ and $u = z^{14}-z^6-z^4$. 
	An exemplar of type A is given by 
	\begin{align*}
		A_1& = \begin{pmatrix}1&0&0&0\\ 0&1&0&0\\ 0&0&1&1\\ 0&0&0&-1\end{pmatrix}, \quad A_2 = \begin{pmatrix}1&0&0&0\\ 2u-1&2u-1&u-1&-1\\ -2u+1&-2u+2&-u+2&1\\ -u+1&-2u&-u&-u\end{pmatrix}, \\ A_3 &= \begin{pmatrix}u&u-1&u-1&-1\\ -2u+1&-2u+2&-2u+1&-u\\ 2u-1&2u-1&2u&u\\ -u&-u&-u&-u\end{pmatrix}, \quad A_4 = \begin{pmatrix}-u+1&-u+1&-u+1&-u\\ 0&1&0&0\\ u&u-1&u&u\\ -1&u&u&0\end{pmatrix}.
	\end{align*}	
	An exemplar of type B is given by
	\begin{align*}
		B_1 &= \begin{pmatrix}1&0&0&0\\ 0&1&0&0\\ 0&0&1&1\\ 0&0&0&-1\end{pmatrix}, \quad B_2 = \begin{pmatrix}0&-1&-1&-1\\ -u&-u+1&-u&-u\\ 0&0&1&0\\ u-1&u-1&u-1&u\end{pmatrix}, \\ B_3 &= \begin{pmatrix}1&-u&-u&-u\\ 0&0&-1&-1\\ 0&0&1&0\\ 0&-1&-1&0\end{pmatrix}, \quad B_4 = \begin{pmatrix}-2u+2&-3u+2&-2u+1&-u+1\\ 2u-1&3u-1&2u-1&u-1\\ -u&-u+1&-u+1&1\\ u&u-1&u&0\end{pmatrix}.
	\end{align*}
	An exemplar of type C is given by
	\begin{align*}
		C_1& = \begin{pmatrix}1&0&0&0\\ 0&1&0&0\\ 0&0&1&1\\ 0&0&0&-1\end{pmatrix}, \quad C_2 = \begin{pmatrix}1&0&0&0\\ -u+1&-2u+1&-u+1&-u\\ u-1&2u&u&u\\ 2u-1&2u-2&2u-1&u\end{pmatrix}, \\ C_3 &= \begin{pmatrix}u-1&2u-1&2u&u\\ -3u+1&-3u+3&-2u+2&-u+1\\ 3u-1&3u-2&2u-1&u-1\\ 0&0&0&1\end{pmatrix}, \quad C_4 = \begin{pmatrix}0&-1&-1&-1\\ -u&-u+1&-u&-u\\ 0&0&1&0\\ u-1&u-1&u-1&u\end{pmatrix}.
	\end{align*}
	The middle convolution details for each type is presented in Table~\ref{G_{30} Nice 4-Tuples}. For all three types, different exemplars do not produce new orbits, and since each type is their own inverse type, these are all the orbits from this group.  Further, the two orbits produced from type A are distinct, as are the two from type C. The two orbits from type B are actually the same. In total $G_{30}$ gives us five different orbits. 
	
	For all five orbits, once we put the matrices in $\SL_2(\C)$, we see that the last matrix is the identity; hence these actually correspond to orbits of four matrices multiplying to the identity. We compared these to the existing classification(s) by Lisovyy--Tykhyy and Tykhyy \cite{T-L,Tyk} as we did for the rank 3 complex reflection groups. Within each type, the two orbits are equivalent to the same orbit of $\mathcal{M}^{(4)}$ in Tykhyy's classification, so we only provide the details once in Table~\ref{G_{30} Type Comparisons}.
        
	\begin{table}[H]
		\centering
		\begin{tabular}{|c|c|c|c|c|c|c|c|c|c|} 
			\hline
			Type & $\lambda$ & $\omega_{X, Y, Z, 4}$ & \begin{sideways}Lisovyy--Tykhyy $\#$ \end{sideways} & $\theta_{1, 2, 3, 4}$ & $\sigma_{12, 23, 13, 24}$   & \begin{sideways} Tykhyy $\#$ \end{sideways}  & \begin{sideways} Their orbit size \end{sideways} & \begin{sideways} Our orbit size  \end{sideways} & \begin{sideways} $\SL_2(\C)$ subgroup  \vspace{0.5ex}\end{sideways} \rule[0pt]{0pt}{4ex}\\ [0.5ex] 
			\hline 
			\multicolumn{10}{|c|}{$G_{30}$ type comparisons} \rule[0pt]{0pt}{3ex}\\ 
			\hline
			A & $z^{18}$ & $(3 + \sqrt{5}, 3+ \sqrt{5}, 3+ \sqrt{5}, - \frac{7\sqrt{5} +11}{2})$ & $17$ & $(\frac{4}{5}, \frac{4}{5}, \frac{4}{5}, \frac{4}{5})$ & $(\frac{1}{5}, 0, \frac{1}{3}, \frac{1}{3})$  & 59 &  10 & 50 & $\inn{120, 5}$ \rule[0pt]{0pt}{4ex} \\ [1ex] 
			\hline 
			B& $z^{10}$ & \small{$(2, 2, 2, -1)$} & 31 & $(\frac{2}{3}, \frac{2}{3}, \frac{2}{3}, \frac{2}{3})$ & $(0,\frac{1}{5}, \frac{3}{5}, \frac{3}{5})$ & 96  & 18 & 7 & $\inn{120, 5}$ \rule[0pt]{0pt}{4ex}\\ [1ex] \hline 
			C & $z^6$ & \small{$(3 -\sqrt{5}, 3 -\sqrt{5}, 3-\sqrt{5}, \frac{7\sqrt{5} -11}{2})$} & 16 & $(\frac{3}{5}, \frac{2}{5}, \frac{3}{5}, \frac{2}{5})$ & $(\frac{2}{5}, \frac{2}{5}, \frac{1}{3}, \frac{3}{5})$ & 57 & 10 & 50 & $\inn{120, 5}$ \rule[0ex]{0ex}{4ex} \\ [1ex]
			\hline
		\end{tabular} \caption{\label{G_{30} Type Comparisons}Here $z = \zeta_{60}$, and the group $\inn{120, 5}$ corresponds to $\SL(2, 5)$.}
	\end{table}

	\subsubsection{$G_{32}$} \label{G32 4-tups}
	The group $G_{32}$ of order 155520 is also denoted  $W(L_4)$; see \cite[Table D.3]{ld2009}.
	In this group the nontrivial eigenvalues of reflections are in $\{\zeta_3, \zeta_3^2 \}$, corresponding to two conjugacy classes. 
	For $G_{32}$, we have $\deg(G_{32}) = \{12, 18, 24, 30\}$, so the eigenvalues of multiplicity 2 must be $\supth{12}$ roots of unity. Expanding the field to $\Q(\zeta_{12})$, we found five distinct eigenvalue sets for the inverse product. Note that to ensure we found all examples, we would have to run the code over $\Q(\zeta_{360})$ since there may be matrices with one or two eigenvalues other than the $\supth{12}$ roots which would not be found; however, we claim it is enough to check over $\Q(\zeta_{12})$.
        
	\begin{lemma}\label{lem5.2}
		If $A = A_1A_2A_3A_4$, with $[A_1, A_2, A_3, A_4]$ a nice tuple in $G_{32}$ so that $A$ has a nontrivial eigenvalue of multiplicity $2$, then the other two eigenvalues of $A$ are both $\supth{m}$ roots of unity $($not necessarily primitive$)$ for the same $m \in \{12, 18, 24, 30\}$.
	\end{lemma}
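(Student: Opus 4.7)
The plan is to use a Galois-theoretic argument based on the trace field of $G_{32}$, which is $\Q(\zeta_3)$. The characteristic polynomial $p(x) = \det(xI - A)$ has coefficients that are symmetric polynomials in traces of powers of $A$, so $p(x) \in \Q(\zeta_3)[x]$. Because the eigenvalues of $A$ are all roots of unity, $p(x)$ splits over some cyclotomic extension of $\Q(\zeta_3)$, and we will compare the eigenvalue set $\{\lambda,\lambda,\mu,\nu\}$ with the Galois action on this extension.

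First I would pin down the order of $\lambda$. By Theorem \ref{ReflEigvThm}, an eigenvalue of multiplicity $2$ in $G_{32}$ must be a primitive $d$-th root of unity with $d$ dividing at least two of the degrees $\{12,18,24,30\}$. Checking pairwise gcds, the largest such $d$ is $12$ (from $\gcd(12,24) = 12$; all other pairs give $6$). Hence $\lambda \in \Q(\zeta_{12})$, and moreover $\lambda$ is a $12$-th root of unity. This will allow us to work entirely inside $\Q(\zeta_{12})$, an extension of degree $2$ over $\Q(\zeta_3)$.

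Next, let $\sigma$ denote the nontrivial element of $\on{Gal}(\Q(\zeta_{12})/\Q(\zeta_3))$, which acts by $i \mapsto -i$ and fixes $\zeta_3$. Since $p(x) \in \Q(\zeta_3)[x]$, the polynomial is $\sigma$-invariant, so $\sigma$ permutes the multi-set of roots of $p$. Because $\lambda$ appears with multiplicity $2$, so does $\sigma(\lambda)$. This leaves only two possibilities: either $\sigma(\lambda) = \lambda$ (so $\lambda \in \Q(\zeta_3)$, i.e., $\lambda$ is a $6$-th root of unity), or $\sigma(\lambda) \neq \lambda$, in which case $\sigma(\lambda)$ must equal both remaining roots, forcing $\mu = \nu = \sigma(\lambda)$. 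In the latter case, $\mu = \nu$ is a $12$-th root of unity, so the claim holds with $m = 12$.

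In the former case, $(x-\lambda)^2 \in \Q(\zeta_3)[x]$, and therefore $q(x) := p(x)/(x-\lambda)^2 \in \Q(\zeta_3)[x]$ is a degree-two polynomial whose roots are $\mu$ and $\nu$. If $\mu = \nu$ there is nothing to prove; otherwise $\mu$ and $\nu$ are the two roots of an irreducible quadratic in $\Q(\zeta_3)[x]$, hence Galois conjugates over $\Q(\zeta_3)$. Galois-conjugate roots of unity have the same order, so $\mu$ and $\nu$ are both primitive $n$-th roots of unity for the same $n$. By Theorem \ref{ReflEigvThm} applied to each, $n$ divides some degree $m \in \{12,18,24,30\}$, and this $m$ simultaneously satisfies $\mu^m = \nu^m = 1$. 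The main (mild) obstacle is not any of the case analysis but simply justifying that $p(x) \in \Q(\zeta_3)[x]$, which reduces to citing that $\Q(\zeta_3)$ is the trace field of $G_{32}$ (and noting that characteristic polynomial coefficients are polynomials in traces, hence lie in the trace field regardless of the representation's ambient coefficient field).
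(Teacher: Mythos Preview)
Your proof is correct, modulo one small case you glossed over: in Case~2 (where $\lambda \in \Q(\zeta_3)$), you write ``If $\mu = \nu$ there is nothing to prove; otherwise $\mu$ and $\nu$ are the two roots of an irreducible quadratic.'' But $q(x)$ could be reducible over $\Q(\zeta_3)$ with $\mu \neq \nu$; in that case $\mu, \nu \in \Q(\zeta_3)$ are both $6$-th roots of unity, hence $m = 12$ works. This is trivial to add, so the argument stands.

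Your approach is genuinely different from the paper's. The paper argues via the \emph{determinant}: since each $A_i$ is a reflection with nontrivial eigenvalue in $\{\zeta_3, \zeta_3^2\}$, $\det A$ is a cube root of unity, so $\lambda_1\lambda_2 = \det(A)\,\lambda^{-2}$ is a $6$-th root of unity. Writing the residues of $\lambda_1, \lambda_2$ as fractions with denominators $6u_i$ for $u_i \in \{2,3,4,5\}$, the constraint that $\lambda_1\lambda_2$ is a $6$-th root forces $u_1 = u_2$ whenever one of them is $3$ or $5$, via a direct divisibility argument; a short separate check handles $u_i = 4$. Your Galois argument---using that the characteristic polynomial lies in $\Q(\zeta_3)[x]$ and that Galois-conjugate roots of unity share an order---is more conceptual and avoids the residue bookkeeping entirely; the paper's argument is more elementary and self-contained, not requiring one to cite the trace field of $G_{32}$. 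Both routes begin from the same observation that $\lambda$ is forced to be a $12$-th root of unity by the degree constraint.
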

	\begin{proof}
	  Let the eigenvalues of $A$ be $\{\lambda, \lambda, \lambda_1, \lambda_2\}$. Taking residues, we write
          $$ \Res(\lambda) = \frac{x}{12}, \quad \Res(\lambda_1) = \frac{p_1}{u_1\cdot 6}, \quad \Res(\lambda_2) = \frac{p_2}{u_2 \cdot 6}, $$
          where $u_i \in  \{2, 3, 4, 5\}$, which we know must be the case by the degrees of $G_{32}$. Suppose the $\lambda_i$ are not $\supth{24}$ roots of unity. So $u_i$ must be 3 or 5 and $(p_i, u_i) = 1$. 
		
		Using that $\det(A_i)$ is a $\suprd{3}$ root of unity, we know that $\det(A) \in \{1, \zeta_3, \zeta_3^2\}$. This tells us that $\lambda_1\lambda_2 = (\lambda^2)^{-1} \det(A)$, and so $\lambda_1\lambda_2$ must be a $\supth{6}$ root of unity since $\lambda^2$ is a $\supth{6}$ root.  
		In terms of the residues, this means 
		\begin{align*}
			\frac{p_1}{u_1 \cdot 6} + \frac{p_2}{u_2 \cdot 6} = \frac{u_2p_1 + u_1p_2}{6 \cdot u_1u_2} = \frac{y}{6}
		\end{align*} for some $1\leq y \leq 6$.
		
		This equation tells us that $u_1 \mid u_2 p_1$, and since $\gcd(p_1, u_1) = 1$, we know $u_1 \mid u_2$. A similar argument shows $u_2 \mid u_1$, so $u_1 = u_2$ if one of them is 3 or 5.
		
		Now suppose  that $\Res(\lambda_1) = \frac{p_1}{4 \cdot 6}$ is a primitive $\supth{24}$ root of unity, \textit{i.e.}~$u_1 = 4$, so $(p_1, 24) = 1$. We know $u_2$ is not 3 or 5, so it can only be $2$ or $4$. If $u_2 = 4$ and $(p_2, 24) = 1$, then we are done since they are both primitive $\supth{24}$ roots. Else $\lambda_2$ is a $\supth{12}$ root of unity, which means $\lambda_1 = (\lambda^2)^{-1} \det(A) \lambda_2^{-1}$. Since the right-hand side is a product of $\supth{12}$ roots, $\lambda_1$ must also be a $\supth{12}$ root, completing the proof.
	\end{proof}
        
	In the notation of  Lemma~\ref{lem5.2}, we know that for nice 4-tuples, if the eigenvalues of the inverse product~$A$ are $\{\lambda, \lambda, \lambda_1, \lambda_2\}$, then $\lambda_1 + \lambda_2 = -2\lambda + \tr(A) \in \Q(\zeta_{12})$, and similarly $\lambda_1\lambda_2 = \lambda^2 \det(A) \in \Q(\zeta_{12})$. 
	With the result from Lemma~\ref{lem5.2}, we did a computer search in each of the fields $\Q(\zeta_{18})$, $\Q(\zeta_{24})$, $\Q(\zeta_{30})$ to find a pair of roots of unity $(s, t)$ other than $\supth{12}$ roots such that $s +t, s\cdot t \in \Q(\zeta_{12})$. For $\Q(\zeta_{18})$ and $\Q(\zeta_{30})$, no such pair could be found. In $\Q(\zeta_{24})$ one example that works is the pair $(\zeta_{24}, -\zeta_{24})$; however, no pairs are found if we further require that $s \neq -t$. This tells us the only cases we might be missing when searching over $\Q(\zeta_{12})$ are tuples whose inverse product has eigenvalues $\{\lambda, \lambda, \lambda_1, -\lambda_1\}$ with $\lambda$ a $\supth{12}$ root (not necessarily primitive) and $\lambda_1$ a primitive $\supth{24}$ root. In the notation of Lemma~\ref{lem5.2}, using that $\det(A)$ is a $\suprd{3}$ root, we know $-\lambda^2 \cdot \lambda_1^2$ is also a $\suprd{3}$ root. Note that if $\lambda_1$ is a primitive $\supth{24}$-root, then $-\lambda_1^2$ is a primitive $\supth{12}$ root. Again using a computer (or by hand), we can check that no primitive $\supth{12}$ root multiplied with the square of a $\supth{12}$-root can be a $\suprd{3}$ root. Thus, we conclude it is enough to search for tuples over $\Q(\zeta_{12})$.
	
	Searching for nice tuples of in $G_{32}$, we find three inverse pairs. We list only one from each pair, noting that type B is its own inverse.
	\begin{table}[H]
		\centering
		\begin{tabular}{|c|c|c|} 
			\hline 
			\multicolumn{3}{|c|}{$G_{32}$ types} \rule[0pt]{0pt}{3ex}\\
			\hline
			Type & Tuple eigenvalues & Residues of inverse product  \rule[0pt]{0pt}{3ex}\\ [0.5ex] 
			\hline
			A & $(w^2, w, w, w)$ & $\{\frac{5}{6}, \frac{5}{6}, \frac{1}{12}, \frac{7}{12} \}$  \rule[0pt]{0pt}{3ex} \\ [0.5ex] 
			\hline 
			B & $(w^2, w, w^2, w)$ & $\left\{ \frac{1}{6}, \frac{1}{6},  \frac{5}{6}, \frac{5}{6}  \right\}$  \rule[0pt]{0pt}{3ex}\\ [0.5ex] \hline
			C & $(w^2, w^2, w^2, w^2)$ & $\{ \frac{1}{12}, \frac{1}{12}, \frac{7}{12}, \frac{7}{12}\}$  \rule[0pt]{0pt}{3ex}\\ [0.5ex]\hline
		\end{tabular} \caption{\label{G_{32} Types}Here $w = \zeta_3$ denotes a $\suprd{3}$ root of unity. }
	\end{table}
	
	For all the exemplars listed below, $w = \zeta_3$. 
	An exemplar of type $A$ is given by 
	\begin{align*}
		A_1 &= \begin{pmatrix}1&0&0&0\\ -w^2&w&w+2&w\\ -w&w+2&-2w&1\\ 0&0&0&1\end{pmatrix}, \quad A_2 = \begin{pmatrix}1&0&0&0\\ 0&1&w^2&0\\ 0&0&w&0\\ 0&0&w^2&1\end{pmatrix}, \\ A_3 &= \begin{pmatrix}1&0&0&0\\ -w^2&w&-w^2&0\\ 0&0&1&0\\ w^2&-w+1&w^2&1\end{pmatrix}, \quad A_4 = \begin{pmatrix}0&w&w^2-1&-w\\ -w^2&2&-w+w^2&1\\ w&-w^2&w&w^2\\ 0&0&0&1\end{pmatrix}.
	\end{align*}	
	An exemplar of type $B$ is given by
	\begin{align*}
		B_1 &= \begin{pmatrix}-w&1&-w&0\\ 0&1&0&0\\ w&w^2&0&0\\ w^2&1&-w&1\end{pmatrix}, \quad B_2 = \begin{pmatrix}1&0&0&0\\ 0&1&w^2&0\\ 0&0&w&0\\ 0&0&w^2&1\end{pmatrix}, \\ B_3 &= \begin{pmatrix}1&w&-w^2&w\\ 0&-w&-1&w^2\\ 0&-1&-w^2&-1\\ 0&w^2&-1&-w\end{pmatrix}, \quad B_4 = \begin{pmatrix}0&-w&w^2&-w\\ 0&1&0&0\\ 1&w&w+2&w\\ -w^2&-1&w&0\end{pmatrix}.
	\end{align*}	
	An exemplar of type $C$ is given by 
	\begin{align*}
		C_1 &= \begin{pmatrix}1&0&0&0\\ 0&1&0&0\\ -1&-w+w^2&-1&w^2\\ w^2&-w+1&2w^2&-w+1\end{pmatrix}, \quad C_2 = \begin{pmatrix}0&w^2&w-1&w^2\\ -w&2&-w+w^2&1\\ w^2&-w&w^2&-w\\ 0&0&0&1\end{pmatrix}, \\ C_3 &= \begin{pmatrix}1&w&1&-w^2\\ 0&0&-w^2&w\\ 0&0&1&0\\ 0&-w&-1&-w\end{pmatrix},  \quad C_4 = \begin{pmatrix}1&w&0&0\\ 0&w^2&0&0\\ 0&w&1&0\\ 0&0&0&1\end{pmatrix}.
	\end{align*}	
	
	The middle convolution details can be found in Table~\ref{G_{32} Nice 4-Tuples}. Neither different exemplars nor the inverse types produce new orbits. The two orbits from type B are distinct, as are the two orbits from type C.
	Of the five middle convolutions computed in $G_{32}$, after putting the matrices into $\SL_2(\C)$, three of them (one from type~B and both orbits from type C) produce tuples where the fifth matrix is the identity. For these three, we compare with the classification(s) by Lisovyy--Tykhyy and Tykhyy \cite{T-L,Tyk} to see to which $\mathcal{M}^{(4)}$ orbits they correspond. The orbits from type C correspond to the same orbit in Tykhyy's classification, so we only provide the details for it once. 
	The tuples that correspond to nontrivial local systems on the 5-punctured sphere are those coming from type A and the other middle convolution from type B. For the type B tuple, after putting the matrices into $\SL_2(\C)$, we get that the product of the third and fourth matrices is not of finite order; hence we cannot compute its signature to compare with Tykhyy's table of $\mathcal{M}^{(5)}$.
	The matrices in this tuple $MC_{-z^2+1}(\mathbf{B})$ are 
	\begin{align*}
		\hat{B}_1 &= \begin{pmatrix}z^3&z\\ 0&-z^3\end{pmatrix}, \ \hat{B}_2 = \begin{pmatrix}z^3-z&0\\ -z^3+z&-z\end{pmatrix},\ 
		\hat{B}_3 = \begin{pmatrix}-z^3&-z\\ 0&z^3\end{pmatrix},\ 
		\hat{B}_4 = \begin{pmatrix}z&0\\ -z&-z^3+z\end{pmatrix},\ 
		\hat{B}_5 = \begin{pmatrix}z^2-1&0\\ 0&-z^2\end{pmatrix}.
	\end{align*}
	\begin{table}[H]
		\centering
		\begin{tabular}{|c|c|c|c|c|c|c|c|c|c|} 
			\hline
			Type & $\lambda$ & $\omega_{X, Y, Z, 4}$ & \begin{sideways} Lisovyy--Tykhyy $\#$ \end{sideways} & $\theta_{1, 2, 3, 4}$ & $\sigma_{12, 23, 13, 24}$   & \begin{sideways} Tykhyy $\#$ \end{sideways}  & \begin{sideways} Their orbit size \end{sideways} & \begin{sideways} Our orbit size  \end{sideways} & \begin{sideways} $\SL_2(\C)$ subgroup  \vspace{1ex} \end{sideways} \rule[0pt]{0pt}{4ex}\\ [0.5ex] 
			\hline 
			\multicolumn{10}{|c|}{$G_{32}$ comparing to $\mathcal{M}^{(4)}$} \rule[0pt]{0pt}{3ex}\\ 
			\hline
			B & $z^2$ & $(0, 3, 0, -2)$ & Type II, $X' = 1, X'' = 2$ & $(\frac{5}{6}, \frac{1}{2}, \frac{1}{6}, \frac{1}{2})$ & $(\frac{1}{2}, \frac{1}{2}, 1, \frac{2}{3})$  & 3 &  2 & 60 & $\inn{24, 4}$ \rule[0pt]{0pt}{4ex} \\ [1ex] 
			\hline 
			C & $z$ & \small{$(4, 4,4, -8)$} & Type IV, $\omega = 4$ & $(\frac{1}{4}, \frac{1}{4}, \frac{1}{4}, \frac{1}{4})$ & $(\frac{1}{3},\frac{1}{3}, \frac{1}{3}, 0)$ & 6  & 4 & 20 & $\inn{48, 28}$ \rule[0pt]{0pt}{4ex}\\ [1ex] \hline 
		\end{tabular} \caption{\label{G_{32} 5-Tuple length 4 Type Comparisons}Here $z = \zeta_{12}$. The subgroups $\inn{24, 4}$ and $\inn{48, 28}$ corresponds to the dicyclic group and the binary octahedral group, respectively.}
	\end{table}
	
	\begin{table}[H]
		\centering
		\begin{tabular}{|c|c|c|c|c|c|c|c|} 
			\hline
			Type & $\lambda$ & $\theta_{1, 2, 3, 4, 5}$ & $\sigma_{12, 23,34, 45, 51, 13, 24}$   & \begin{sideways} Tykhyy $\#$ \end{sideways}  & \begin{sideways} Their orbit size \end{sideways} & \begin{sideways} Our orbit size  \end{sideways} & \begin{sideways} $\SL_2(\C)$ subgroup \vspace{0.5ex} \end{sideways} \rule[0pt]{0pt}{4ex}\\ [0.5ex] 
			\hline 
			\multicolumn{8}{|c|}{$G_{32}$ comparing to $\mathcal{M}^{(5)}$} \rule[0pt]{0pt}{3ex}\\ 
			\hline
			A & $-z^2+1$ & $(\frac{1}{2}, \frac{5}{6}, \frac{1}{6}, \frac{1}{6}, \frac{1}{2})$ & $(\frac{1}{2}, 1, \frac{1}{3}, \frac{1}{2}, \frac{5}{6}, \frac{1}{2}, 1)$  & 1 &  4 & 40 & $\inn{24, 4}$ \rule[0pt]{0pt}{4ex} \\ [1ex] 
			\hline 
			B & $-z^2+1$ & -
			& -
			& -  & - & $\geq 350$ & 0 \rule[0pt]{0pt}{4ex}\\ [1ex] \hline 
		\end{tabular} \caption{\label{G_{32} 5-Tuple length 5 Type Comparisons}Here $z = \zeta_{12}$. The subgroup $\inn{24, 4}$ corresponds to the dicyclic group of order 24. }
	\end{table}	
	
	\subsection{Nice 5-tuples in rank 4 groups}
	In this case we are searching for tuples of five reflections generating the group such that their inverse product has a nontrivial eigenvalue of multiplicity 3. Searching all the rank 4 groups, we found that such tuples only existed in $G_{32}$. 
	\subsubsection{$G_{32}$}
	Since $\deg (G_{32}) = \{12, 18, 24, 30\}$, we know that any eigenvalue of multiplicity 3 must be a $\supth{6}$ root of unity. Using the trace, we then get that the $\supth{4}$ eigenvalue must also be a $\supth{6}$ root, so we do not need to expand the field to compute all the examples. 
	In $G_{32}$ there were too many possible tuples to perform an exhaustive search for all the nice tuples. We were able to find at least four types, which correspond to two inverse pairs, so we only provide the details for one type from each pair. 
	\begin{table}[H]
		\centering
		\begin{tabular}{|c|c|c|} 
			\hline 
			\multicolumn{3}{|c|}{$G_{32}$ 5-tuple types} \rule[0pt]{0pt}{3ex}\\
			\hline
			Type & Tuple eigenvalues & Residues of inverse product  \rule[0pt]{0pt}{3ex}\\ [0.5ex] 
			\hline
			A & $(w, w, -w-1, w, w)$ & $\{ \frac{1}{2},\frac{5}{6}, \frac{5}{6}, \frac{5}{6}\}$  \rule[0pt]{0pt}{3ex} \\ [0.5ex] 
			\hline 
			B & $(-w-1, w, -w-1, w, w)$ & $\left\{ \frac{1}{6}, \frac{5}{6},  \frac{5}{6}, \frac{5}{6}  \right\}$  \rule[0pt]{0pt}{3ex}\\ [0.5ex] \hline
		\end{tabular} \caption{\label{G_{32} 5-Tuple Types}Here $w = \zeta_3$.}
	\end{table}
	
	An exemplar of type A is provided below with $w = \zeta_3$: 
	\begin{align*}
		A_1 &= \begin{pmatrix}1&0&0&0\\ w+1&w&w+1&0\\ 0&0&1&0\\ -w-1&-w+1&-w-1&1\end{pmatrix}, \quad A_2 = \begin{pmatrix}1&0&0&0\\ 0&1&-w-1&-w\\ 0&0&w+1&-1\\ 0&0&w&0\end{pmatrix}, \quad A_3 = \begin{pmatrix}1&w&1&w+1\\ 0&0&w+1&w\\ 0&0&1&0\\ 0&-w&-1&-w\end{pmatrix}, \\ A_4 &= \begin{pmatrix}w&0&0&0\\ -w-1&1&0&0\\ 0&0&1&0\\ 0&0&0&1\end{pmatrix}, \quad A_5 = \begin{pmatrix}1&0&0&0\\ 1&2w+2&-w+1&w+1\\ -w-1&-w+1&-w-1&-w\\ 0&0&0&1\end{pmatrix}.
	\end{align*}
	An exemplar of type A is provided below with $w = \zeta_3$:
	\begin{align*}
		B_1 &= \begin{pmatrix}1&w&1&w\\ 0&0&w+1&-1\\ 0&w+1&-w+1&w+1\\ 0&-1&w+1&0\end{pmatrix}, \quad B_2 = \begin{pmatrix}w&0&0&0\\ -w-1&1&0&0\\ 0&0&1&0\\ 0&0&0&0\end{pmatrix}, \quad B_3 = \begin{pmatrix}1&w&1&0\\ 0&0&w+1&0\\ 0&1&-w&0\\ 0&w+1&-w&1\end{pmatrix}, \\
		B_4 &= \begin{pmatrix}1&0&0&0\\ 0&1&-w-1&0\\ 0&0&w&0\\ 0&0&-w-1&1\end{pmatrix}, \quad B_5 = \begin{pmatrix}1&w+1&-w&1\\ 0&w+1&1&w+1\\ 0&0&1&0\\ 0&-w-1&w&0\end{pmatrix}.
	\end{align*}
	The details of the middle convolution for both types is presented in Table~\ref{G_{32} Nice 5-Tuples}. The $\SL_2(\C)$ subgroups generated by both tuple types is the dicyclic group of order 24. Inspecting the six $\SL_2(\C)$ matrices we get for both of these types, we see that both tuples satisfy the description of ``TYPE'' A of $\mathcal{M}^{(6)}$ belonging to the dihedral group in Tykhyy's classification; see \cite{Tyk}. In these cases we also could not check if different exemplars or inverses produce different orbits as we could not generate the list of all nice tuples. 

	\newpage
	\appendix 
	\appendixpage
	The notation for all the appendices is the same.
	The ``$\xi$'' column gives the residue of the root of unity in the ``$\lambda$'' column used to compute middle convolution. The ``Character'' column is the character we tensor our tuple with to get matrices in $\SL_2(\C)$. The column ``O.~size'' is the size of the $\SL_2(\C)$-character variety orbit of the tuple (\textit{i.e.}~after it is tensored with the character). In these tables ``S.~size'' denotes the size of the $\GL_2(\C)$ subgroup generated by the $2 \times 2$  matrices  we get after middle convolution but \textit{before} we tensor with the character. The $\SL_2(\C)$-subgroup size after tensoring with the character is provided in each section for each group as part of the comparison tables. For infinite subgroups, the order is listed as ``0'', following the notation used in Magma.
        
	\section{Middle convolution tables for nice 3-tuples} \label{appendix A}
	\begin{center}
	  \begin{tabular}{|Sc|Sc|Sc|Sc|Sc|Sc|Sc|Sc|} 
			\hline 
			\multicolumn{8}{|c|}{$G_{23}$ middle convolutions}	\rule[0pt]{0pt}{3ex}\\
			\hline 
			$\xi$ & $\lambda$ & $M_1$ & $M_2$ & $M_3$ & Character & O.~size & S.~size \rule[0pt]{0pt}{4ex}\\ [0.5ex] 
			\hline
			\multicolumn{8}{|c|}{$G_{23}$ type A middle convolutions} \rule[0pt]{0pt}{3ex}\\ 
			\hline
			$\frac{1}{2}$ & $-1$ & $\begin{pmatrix}1 & 0 \\ -q^3-q^2 + 1 & 1\end{pmatrix}$ & $\begin{pmatrix}1 & -1 \\ 0 & 1\end{pmatrix}$ & $\begin{pmatrix}1 & 0 \\1 & 1\end{pmatrix}$ & $(1, 1, 1, 1)$ & 40 & 0 \rule[0pt]{0pt}{4ex} \\ 
			\hline 
			$\frac{1}{10}$ & $-q^3$ & $\begin{pmatrix}q^3 & 0 \\ -q^3 & 1 
			\end{pmatrix}$ & $\begin{pmatrix} q^3 & -1 \\ 0 & 1 \end{pmatrix}$ & $\begin{pmatrix}1 & 0 \\q^3 & q^3\end{pmatrix}$  & $(q, q, q, q^2)$ & 10 & 600 \rule[0pt]{0pt}{4ex}\\ \hline
			$\frac{9}{10}$ & $-q^2$ & $ \begin{pmatrix}q^2 & 0 \\ -q^2 & 1 
			\end{pmatrix}$ & $\begin{pmatrix} q^2 & -1 \\ 0 & 1 \end{pmatrix}$ & $\begin{pmatrix}1 & 0 \\q^2 & q^2\end{pmatrix}$ & $(q^4, q^4, q^4, q^3)$ & 10 & 600 \rule[0pt]{0pt}{4ex}\\ \hline
			\multicolumn{8}{|c|}{ $G_{23}$ type B middle convolutions} \rule[0pt]{0pt}{3ex}\\ 
			\hline
			$\frac{1}{2}$ & $-1$ & ${\begin{pmatrix}1&0\\ 1&1\end{pmatrix}}$ & ${\begin{pmatrix}1&-1\\ 0&1\end{pmatrix}}$ & ${\begin{pmatrix}1&0\\ q^3+q^2+2&1\end{pmatrix}}$ & $(1, 1, 1, 1)$ & 40 & 0 \rule[0pt]{0pt}{4ex} \\ 
			\hline 
			$\frac{3}{10}$ & $-q^4$ & ${\begin{pmatrix}q^4&0\\ -q^4+q^3+1&1\end{pmatrix}}$ & ${\begin{pmatrix}q^4&-1\\ 0&1\end{pmatrix}}$ & ${\begin{pmatrix}1&0\\ q^4-q^3-1&q^4\end{pmatrix}}$ & $(q^3, q^3, q^3, q)$ & 10 & 600 \rule[0pt]{0pt}{4ex}\\ \hline
			$\frac{7}{10}$ & $-q$ & ${\begin{pmatrix}q&0\\ q^2-q+1&1\end{pmatrix}}$ & ${\begin{pmatrix}q&-1\\ 0&1\end{pmatrix}}$ & ${\begin{pmatrix}1&0\\ -q^2+q-1&q\end{pmatrix}}$ & $(q^2, q^2, q^2, q^4)$ & 10 & 600 \rule[0pt]{0pt}{4ex}\\ \hline
			\multicolumn{8}{|c|}{ $G_{23}$ type C middle convolutions} \rule[0pt]{0pt}{3ex}\\ 
			\hline
			$\frac{1}{2}$ & $-1$ & ${\begin{pmatrix}u-1&3u-5\\ 1&-u+3\end{pmatrix}}$ & ${\begin{pmatrix}1&-1\\ 0&1\end{pmatrix}}$ & ${\begin{pmatrix}1&0\\ u+1&1\end{pmatrix}}$ & $(1, 1, 1, 1)$ & 72 & 0 \rule[0pt]{0pt}{4ex} \\ 
			\hline 
			$\frac{1}{6}$ & $z^5$ & $ {\begin{pmatrix}-u-v-z^{10}&-z^5\\ u+2v+z^{10}&u+v\end{pmatrix}}$ & $ {\begin{pmatrix}-z^5&-1\\ 0&1\end{pmatrix}}$ & $ {\begin{pmatrix}1&0\\ -u-v-z^5&-z^5\end{pmatrix}}$ & $ {-(z^5, z^5, z^5, -1)}$ & 18 & 360 \rule[0pt]{0pt}{4ex}\\ \hline
			$\frac{5}{6}$ & $-z^{10}$ & $ {\begin{pmatrix}z^5+v&z^{10}\\ -u-2v-z^5&v\end{pmatrix}}$ & $ {\begin{pmatrix}z^{10}&-1\\ 0&1\end{pmatrix}}$ & $ {\begin{pmatrix}1&0\\ v+z^{10}&z^{10}\end{pmatrix}}$ & ${(z^{10}, z^{10}, z^{10}, 1)}$ & 18 & 360 \rule[0pt]{0pt}{4ex}\\ \hline
		\end{tabular} \captionof{table}{Here $q = \zeta_5$, $z = \zeta_{30}$, $u = z^7-z^3-z^2+1$, and $v = -z^4+z$.}\label{G_{23} Nice 3-Tuples}
	\end{center}
		\medskip
		\begin{center}  
			\begin{tabular}{|Sc|Sc|Sc|Sc|Sc|Sc|Sc|Sc|}
				\hline 
				\multicolumn{8}{|c|}{$G_{24}$ middle convolutions}
				\rule[0pt]{0pt}{3ex}\\ \hline 
				$\xi$ & $\lambda$ & $M_1$ & $M_2$ & $M_3$ & Character & O.~size & S.~size \rule[0pt]{0pt}{3ex}\\ [0.5ex]
				\hline 
				\multicolumn{8}{|c|}{$G_{24}$ type A middle convolutions} \rule[0pt]{0pt}{3ex}\\ 
				\hline
				$\frac{1}{14}$ & $-z^4$ & $ {\begin{pmatrix}z^5+z^4+z+1&-z^5+z^4+z^3+1\\ \:-z^2&-z^5-z\end{pmatrix}}$ & $ {\begin{pmatrix}z^4&1\\ 0&1\end{pmatrix}}$ & $ {\begin{pmatrix}1&0\\ -z^4&z^4\end{pmatrix}}$ & $(z^5, z^5, z^5, z^6)$ & 28 & 0 \rule[0pt]{0pt}{4ex}\\ \hline
				$\frac{9}{14}$ & $-z$ & $ {\begin{pmatrix}-z^6-z^5-z^4&z^6-z^3+1\\ -z^4&-z^3-z^2\end{pmatrix}}$ & $ {\begin{pmatrix}z&1\\ 0&1\end{pmatrix}}$ & $ {\begin{pmatrix}1&0\\ -z&z\end{pmatrix}}$ & $(z^{3}, z^{3}, z^{3}, z^5)$ & 28 & 0 \rule[0pt]{0pt}{4ex}\\ \hline 	
				$\frac{11}{14}$ & $-z^2$ & $ {\begin{pmatrix}-z^5-z^3-z&-z^6 +z^5 +z^2 +1\\ -z&-z^6-z^4\end{pmatrix}}$ & $ {\begin{pmatrix}z^2&1\\ 0&1\end{pmatrix}}$ & $ {\begin{pmatrix}1&0\\ -z^2&z^2\end{pmatrix}}$ & $-(z^6, z^6, z^6, z^3)$ & 28 & 0 \rule[0pt]{0pt}{4ex} \\ 
				\hline
			\end{tabular} \captionof{table}{Here $z = \zeta_{7}$.} \label{G_{24} Nice 3-Tuples}
		\end{center}
		
		\begin{table}[H]  \begin{adjustbox}{width=\textwidth,center}
			\centering
			\begin{tabular}{|Sc|Sc|Sc|Sc|Sc|Sc|Sc|Sc|} 
				\hline 
				\multicolumn{8}{|c|}{$G_{25}$ middle convolutions}	\rule[0pt]{0pt}{3ex}\\
				\hline 
				$\xi$ & $\lambda$ & $M_1$ & $M_2$ & $M_3$ & Character & O.~size & S.~size \rule[0pt]{0pt}{4ex}\\ [0.5ex] 
				\hline
				\multicolumn{8}{|c|}{$G_{25}$ type A middle convolutions} \rule[0pt]{0pt}{3ex}\\ 
				\hline
				$\frac{5}{6}$ & $-w$ & $\begin{pmatrix}1&-w\\ 0&-w^2\end{pmatrix}$ & $\begin{pmatrix}-w^2&w\\ 0&1\end{pmatrix}$ & $\begin{pmatrix}1&0\\ -1&-1\end{pmatrix}$ & $(iw^2, iw^2, i, iw^2)$ & 12 & 72 \rule[0pt]{0pt}{4ex} \\ 
				\hline 
				$\frac{1}{6}$ & $-w^2$ & $\begin{pmatrix}1&-w\\ 0&-1\end{pmatrix}$ & $\begin{pmatrix}-1&w\\ 0&1\end{pmatrix}$ & $\begin{pmatrix}1&0\\ -w&-w\end{pmatrix}$ & $(i, i, iw, iw^2)$ & 12 & 72 \rule[0pt]{0pt}{4ex}\\ \hline
				$\frac{2}{3}$ & $w^2$ & $ \begin{pmatrix}1&w\\ 0&1\end{pmatrix}$ & $\begin{pmatrix}1&w\\ 0&1\end{pmatrix}$ & $\begin{pmatrix}1&0\\ w&w\end{pmatrix}$ & $(1,1, w, w^2)$ & 24 & 0 \rule[0pt]{0pt}{4ex}\\ \hline
				\multicolumn{8}{|c|}{$G_{25}$ type B middle convolutions} \rule[0pt]{0pt}{3ex}\\ 
				\hline
				$\frac{1}{9}$ & $z^4$ & $ {\begin{pmatrix}-z^8&z^{14}+z^6\\ -z^{10}-z^2&z^{16}+z^8+1\end{pmatrix}}$ &  $ {\begin{pmatrix}-z^{10}&z^6\\ 0&1\end{pmatrix}}$  & $ {\begin{pmatrix}1&0\\ z^4&-z^{10}\end{pmatrix}}$ & $ {(-z^{10}, z^4, z^4, 1)}$ & 36 & 0 \rule[0pt]{0pt}{4ex} \\ 
				\hline 
				$\frac{7}{9}$ & $-z^{10}$ & $ {\begin{pmatrix}z^2&-z^8+z^6\\ z^{16}-z^{14}&z^4-z^2+1\end{pmatrix}}$ & $ {\begin{pmatrix}u&z^6\\ 0&1\end{pmatrix}}$ & $ {\begin{pmatrix}1&0\\ -z^{10}&u\end{pmatrix}}$ & $ {(u, -z^{10}, -z^{10}, 1)}$ & 36 & 0 \rule[0pt]{-5pt}{4ex}\\ \hline
				$\frac{4}{9}$ & $z^{16}$ &  $ {\begin{pmatrix}z^{14}&z^6+z^2\\ z^8+z^4&-z^{10}-z^{14}+1\end{pmatrix}}$ & $ {\begin{pmatrix}z^4&z^6\\ 0&1\end{pmatrix}}$ & $ {\begin{pmatrix}1&0\\ u&z^4\end{pmatrix}}$ & $ {(z^4, u, u, 1)}$ & 36 & 0 \rule[0pt]{-5pt}{4ex}\\ \hline
				\multicolumn{8}{|c|}{$G_{25}$ type C middle convolutions} \rule[0pt]{0pt}{3ex}\\ 
				\hline
				$\frac{11}{12}$ & $-z^9+z^3$ &  $\frac{1}{2}\cdot \begin{pmatrix}z^9+1& -z^9+1\\ -z^9+1&z^9+1\end{pmatrix}$ & $\begin{pmatrix}z^9&0\\ 0&1\end{pmatrix}$ & $\begin{pmatrix}1&0\\ 0&z^9\end{pmatrix}$ & $(q^{63}, q^{63}, q^{63}, q^{27})$ & 4 & 96 \rule[0pt]{0pt}{4ex} \\ 
				\hline 
				$\frac{5}{12}$ & $z^{9}-z^3$ & $\frac{1}{2} \cdot \begin{pmatrix}-z^9+1&z^9+1\\ z^9+1&-z^9+1\end{pmatrix}$ & $\begin{pmatrix}-z^9&0\\ 0&1\end{pmatrix}$ & $\begin{pmatrix}1&0\\ 0&-z^9\end{pmatrix}$ & $(q^9, q^9, q^9, q^{45})$ & 4 & 96 \rule[0pt]{0pt}{4ex}\\ \hline
				$\frac{2}{3}$ & $-z^6$ &  $\begin{pmatrix}1&z^6\\ 0&1\end{pmatrix}$ & $\begin{pmatrix}1&0\\ z^6-1&1\end{pmatrix}$ & $\begin{pmatrix}1&0\\ z^6-1&1\end{pmatrix}$ & $(1, 1, 1, 1)$ & 16 & 0 \rule[0pt]{0pt}{4ex}\\ \hline
				\multicolumn{8}{|c|}{$G_{25}$ Type D middle convolutions} \rule[0pt]{0pt}{3ex}\\ 
				\hline
				$\frac{1}{3}$ & $w$ &  $\begin{pmatrix}-w&1\\ -w^2&0\end{pmatrix}$ & $\begin{pmatrix}w^2&w^2\\ 0&1\end{pmatrix}$ & $\begin{pmatrix}1&0\\ -1&w^2\end{pmatrix}$ & $(w^2, w^2, w^2, 1)$ & 4 & 24 \rule[0pt]{0pt}{4ex} \\ 
				\hline 
				$\frac{5}{6}$ & $-w$ & $ (-w^2)$ & $(-w^2)$ & $(-w^2)$ & - & - & - \rule[0pt]{0pt}{4ex}\\ [0.5ex]   \hline
			\end{tabular} \end{adjustbox}\caption{\label{G_{25} Nice 3-Tuples}Here $w = \zeta_3$, $z = \zeta_{36}$, and $u = z^{16} = z^{10}-z^4$, $z^{14} = z^8-z^2$ and $q = \zeta_{72}$.}
		\end{table}
		
		\begin{table}
			\centering \begin{adjustbox}{width=\textwidth,center}
			\begin{tabular}{|Sc|Sc|Sc|Sc|Sc|Sc|Sc|Sc|} 
				\hline 
				\multicolumn{8}{|c|}{$G_{26}$ middle convolutions}	
				\rule[0pt]{0pt}{3ex}\\
				\hline 
				$\xi$ & $\lambda$ & $M_1$ & $M_2$ & $M_3$ & Character & O.~size & S.~size \rule[0pt]{0pt}{4ex}\\ [0.5ex] 
				\hline
				\multicolumn{8}{|c|}{$G_{26}$ type A middle convolutions} \rule[0pt]{0pt}{3ex}\\ 
				\hline
				$\frac{1}{12}$ & $z^3$ & $\begin{pmatrix}-z^9&z^9+1\\ -u&u+1\end{pmatrix}$ & $\begin{pmatrix}-z^9&1\\ 0&1\end{pmatrix}$ & $\begin{pmatrix}1&0\\ z^9+z^3&-z^3\end{pmatrix}$ & $(q^{21}, q^9, q^{15}, -q^{63})$  &  24 & 0 \rule[0pt]{0pt}{4ex} \\ 
				\hline 
				$\frac{7}{12}$ & $-z^3$ & $\begin{pmatrix}z^9&-z^9+1\\ u&-u+1\end{pmatrix}$ & $\begin{pmatrix}z^9&1\\ 0&1\end{pmatrix}$ & $\begin{pmatrix}1&0\\ -z^9-z^3&z^3\end{pmatrix}$ & $(q^3, q^{27}, q^{33}, q^9)$ & 24 & 0 \rule[0pt]{0pt}{4ex}\\ \hline
				$\frac{5}{6}$ & $-z^6+1$ & $\begin{pmatrix}1&0\\ -z^6+1&z^6\end{pmatrix}$ & $\begin{pmatrix}-1&1\\ 0&1\end{pmatrix}$ & $\begin{pmatrix}1&0\\ -z^6+2&z^6-1\end{pmatrix}$ & $(z^{15}, z^9, z^6-1, 1)$ & 24 & 72 \rule[0pt]{0pt}{4ex}\\ \hline
				\multicolumn{8}{|c|}{$G_{26}$ type B middle convolutions} \rule[0pt]{0pt}{3ex}\\ 
				\hline
				$\frac{5}{18}$ & $z^{10}$ & $\begin{pmatrix}-z^4&0\\ z^8+z^4+1&1\end{pmatrix}$ & $\begin{pmatrix}-z^4&-1\\ 0&1\end{pmatrix}$ & $\begin{pmatrix}1&0\\ -z^{10}-z^4&-z^{10}\end{pmatrix}$ & $(z^7, z^7, z^4, -1)$ & 36 & 0 \rule[0pt]{0pt}{4ex} \\ 
				\hline 
				$\frac{11}{18}$ & $-z^{4}$ & $\begin{pmatrix}-z^{16}&0\\ z^{16}-z^{14}+1&1\end{pmatrix}$ & $\begin{pmatrix}-z^{16}&-1\\ 0&1\end{pmatrix}$ & $\begin{pmatrix}1&0\\ -z^{16}+z^4&z^4\end{pmatrix}$ & $(z, z, z^{16}, -1)$ & 36 & 0 \rule[0pt]{0pt}{4ex}\\ \hline
				$\frac{17}{18}$ & $-z^{16}$ &  $\begin{pmatrix}z^{10}&0\\ -z^{10}-z^2+1&1\end{pmatrix}$ & $\begin{pmatrix}z^{10}&-1\\ 0&1\end{pmatrix}$ & $\begin{pmatrix}1&0\\ z^{16} + z^{10}&z^{16}\end{pmatrix}$ & $(z^{13}, z^{13}, z^{10}, 1)$ & 36 & 0 \rule[0pt]{0pt}{4ex}\\ \hline
				\multicolumn{8}{|c|}{$G_{26}$ type C middle convolutions} \rule[0pt]{0pt}{3ex}\\ 
				\hline
				$\frac{5}{6}$ & $-z^6+1$ &  $\begin{pmatrix}z^6&z^6\\ -2z^6+1&-z^6\end{pmatrix}$ & $\begin{pmatrix}-1&-1\\ 0&1\end{pmatrix}$ & $\begin{pmatrix}1&0\\ z^6-2&z^6-1\end{pmatrix}$ & $(i, i, w, w+1)$ & 12 & 18 \rule[0pt]{0pt}{4ex} \\ 
				\hline 
				$\frac{1}{6}$ & $z^6$ & $ (-z^6+1)$ & $(-z^6+1)$ & $(-z^6)$ & - & - & - \rule[0pt]{0pt}{4ex}\\ [0.5ex]   \hline
			\end{tabular} \end{adjustbox}\caption{\label{G_{26} Nice 3-Tuples}Here $z = \zeta_{36}$, $q = \zeta_{72}$, $w = \zeta_3$, and $u = 2z^9-z^3$.}
		\end{table}

		  \begin{sidewaystable}
			\begin{tabular}{|Sc|Sc|Sc|Sc|Sc|Sc|Sc|Sc|} 
				\hline 
				\multicolumn{8}{|c|}{$G_{27}$ type A middle convolutions} \rule[0pt]{0pt}{3ex}\\ 
				\hline
				$\xi$ & $\lambda$ & $\tilde{A}_1$ & $\tilde{A}_2$ & $\tilde{A}_3$ & Character & O.~size & S.~size \rule[0pt]{0pt}{4ex}\\ [0.5ex] 
				\hline
				$\frac{2}{60}$ & $z^2$ & $\begin{pmatrix}z^6u-z^2&2\left(z^{50}+z^4\right)+z^{22}+z^{18}-z^6\\ \left(u+1\right)\left(z^{12}+1\right)&-z^6u+1\end{pmatrix}$ & $\begin{pmatrix}-z^2&z^{50}\\ 0&1\end{pmatrix}$ & $\begin{pmatrix}1&0\\ z^{24}+2z^{12}+1&-z^2\end{pmatrix}$ & $(z^{14}, z^{14}, z^{14}, z^{18})$ & 60  & 0 \rule[0pt]{0pt}{4ex} \\ 
				\hline 
				$\frac{38}{60}$ & $-z^8$ & $\begin{pmatrix}z^{24}-z^{20}&\left(z^{28}+2z^4\right)u\:+2\left(z^8-z^2\right)\\ z^{14}+u-z^2-1&-z^{24}-u+z^2\end{pmatrix}$ & $\begin{pmatrix}z^8&-z^{20}\\ 0&1\end{pmatrix}$ & $\begin{pmatrix}1&0\\ -2z^{18}-z^6&z^8\end{pmatrix}$ & $(z^{26}, z^{26}, z^{26}, z^{42})$ &  60 & 0 \rule[0pt]{0pt}{4ex}\\ \hline
				$\frac{50}{60}$ & $-z^{20}$ & $\begin{pmatrix}-u-z^{20}&u\\ -u-z^{20}-1&u+1\end{pmatrix}$ & $\begin{pmatrix}-z^{20}&z^{20}\\ 0&1\end{pmatrix}$ & $\begin{pmatrix}1&0\\ -z^{24}+z^6+1&-z^{20}\end{pmatrix}$ & $(z^{20}, z^{20}, z^{20}, 1)$ & 60 & 360 \rule[0pt]{0pt}{4ex}\\ \hline
			\end{tabular} \caption{\label{G_{27} Type A 3-Tuples}Here $z = \zeta_{60}$ and $u = -z^{10}-z^8+z^2$.}
			\vspace{2ex}
			\centering
			\begin{tabular}{|Sc|Sc|Sc|Sc|Sc|Sc|Sc|Sc|} 	
				\hline 
				\multicolumn{8}{|c|}{$G_{27}$ type B middle convolutions} \rule[0pt]{0pt}{3ex}\\ 
				\hline
				$\xi$ & $\lambda$ & $\tilde{B}_1$ & $\tilde{B}_2$ & $\tilde{B}_3$ & Character & O.~size & S.~size \rule[0pt]{0pt}{4ex}\\ [0.5ex] 
				\hline
				$\frac{5}{60}$ & $z^{5}$ & $\frac{1}{3} \begin{pmatrix}z^5f_1-2z^{10}+1&f_1+2z^{15}+3z^{10}-z^5\\ f_1-g_1&-z^5f_1+2z^{10}-3z^5+2\end{pmatrix}$ & $\begin{pmatrix}-z^5&-z^{20}f_1-u+2\\ 0&1\end{pmatrix}$ & $\begin{pmatrix}1&0\\ z^{25}u+z^{15}&-z^5\end{pmatrix}$  & $(q^{25}, q^{25}, q^{25}, q^{45})$ & 96 & 0 \rule[0pt]{0pt}{4ex} \\ 
				\hline 
				$\frac{35}{60}$ & $-z^{5}$ & $\frac{1}{3}\begin{pmatrix}-z^5f_1-2z^{10}+1&f_1-2z^{15}+3z^{10}+z^5\\ f_1+g_1&z^5f_1+2z^{10}+3z^5+2\end{pmatrix}$ & $\begin{pmatrix}z^5&-z^{20}f_1-u+2\\ 0&1\end{pmatrix}$ & $\begin{pmatrix}1&0\\ z^{25}u-z^{15}&z^5\end{pmatrix}$ & $(q^{55}, q^{55}, q^{55}, q^{75})$ & 96 & 0 \rule[0pt]{-5pt}{4ex}\\ \hline
				$\frac{50}{60}$ & $-z^{20}$ &  $\begin{pmatrix}f_1-u-1+2z^{10}&-f_1+2u+1\\ f_1-u+2z^{10}&-f_1+u-z^{20}\end{pmatrix}$ & $\begin{pmatrix}z^{20}&-f_1+2u+2\\ 0&1\end{pmatrix}$ & $\begin{pmatrix}1&0\\ f_1-u+1&z^{20}\end{pmatrix}$ & $(z^{20}, z^{20}, z^{20}, 1)$  & 96 & 360 \rule[0pt]{-5pt}{4ex}\\ \hline
			\end{tabular} \caption{\label{G_{27} Type B 3-Tuples}Here $z = \zeta_{60}$, $q = \zeta_{120}$, $f_1 = -z^{14}-2z^{10}-z^8+z^6+z^4+z^2$, and $g_1 = 4z^{15}+3z^{13}-3z^{11}-3z^9-3z^7-5z^5+3z$.}
                  \end{sidewaystable}
                  \vspace{2ex}
                  
 \begin{sidewaystable}
			\begin{tabular}{|Sc|Sc|Sc|Sc|Sc|Sc|Sc|Sc|} 
				\hline 
				\multicolumn{8}{|c|}{$G_{27}$ type C middle convolutions} \rule[0pt]{0pt}{3ex}\\ 
				\hline
				$\xi$ & $\lambda$ & $\tilde{C}_1$ & $\tilde{C}_2$ & $\tilde{C}_3$ & Character & O.~size & S.~size \rule[0pt]{0pt}{4ex}\\ [0.5ex] 
				\hline
				$\frac{10}{60}$ & $z^{10}$ &   {$\begin{pmatrix}z^{50}v-z^{10}-1&z^{50}v-u+z^{10}\\ 2z^{50}v-3&-z^{50}v+2\end{pmatrix}$} & $\begin{pmatrix}-z^{10}&u+2\\ 0&1\end{pmatrix}$ &  {$\begin{pmatrix}1&0\\ \frac{1}{2}\left(3z^{20}v+u+2z^{10}+4\right)&-z^{10}\end{pmatrix}$} & $(z^{10}, z^{10}, z^{10}, -1)$ & 60 & 360 \rule[0pt]{0pt}{4ex} \\ 
				\hline 
				$\frac{34}{60}$ & $-z^{4}$ &  {$\begin{pmatrix}-z^8-z^6&z^{18}f_1+2z^{12}+4\\ z^{14}-z^{50}v+2z^{10}+u&u-z^{50}v+1\end{pmatrix}$} & $\begin{pmatrix}z^4&u+2\\ 0&1\end{pmatrix}$ &  {$\begin{pmatrix}1&0\\ \frac{1}{2}\left(z^4f_1+3z^{24}-z^{12}+z^6\right)&z^4\end{pmatrix}$} & $(z^{28}, z^{28}, z^{28}, z^{36})$ & 60 & 0 \rule[0pt]{0pt}{4ex}\\ \hline
				$\frac{46}{60}$ & $-z^{16}$ &   {$\begin{pmatrix}z^{14}-z^4+z^2&f_2\\ -v+z^{28}v+z^{10}-z^8&\left(z^{50}+z^{28}\right)v-u\end{pmatrix}$} & $\begin{pmatrix}z^{16}&u+2\\ 0&1\end{pmatrix}$ &  {$\begin{pmatrix}1&0\\ \frac{1}{2}\left(z^{22}+2z^{16}u+z^{10}-3z^6\right)&z^{16}\end{pmatrix}$} & $(z^{22}, z^{22}, z^{22}, z^{54})$ & 60 & 0 \rule[0pt]{0pt}{4ex}\\ \hline
			\end{tabular} \caption{\label{G_{27} Type C 3-Tuples}Here $z = \zeta_{60}$, $v = z^{16}+z^4$, $u = -z^{10}-z^8+z^2$, $f_1 = -z^{14}-2z^{10}-z^8+z^6+z^4+z^2$, and $f_2 = -4z^{14}-z^{12}+z^{10}+2z^{6}+3z^4-2z^2$.}
	          \end{sidewaystable}
                  
		  
		\section{Middle convolution tables for nice 4-tuples} \label{appendix B}
		In these sections the notation is the same as above.

		\begin{center}
			\begin{tabular}{|Sc|Sc|Sc|Sc|Sc|Sc|Sc|Sc|Sc|Sc|} 
				\hline 
				\multicolumn{10}{|c|}{$G_{23}$ 4-tuple middle convolutions} \rule[0pt]{0pt}{3ex}\\ 
				\hline
				Type & $\xi$ & $\lambda$ & $\tilde{A}_1$ & $\tilde{A}_2$ & $\tilde{A}_3$ & $\tilde{A}_4$ &  Character & O.~size & S.~size \rule[0pt]{0pt}{4ex}\\ [0.5ex] 
				\hline
				A & $\frac{1}{2}$ & $-1$ &  {$\begin{pmatrix}u&-u+1\\ u-1&-u+2\end{pmatrix}$} & $\begin{pmatrix}0&1\\ -1&2\end{pmatrix}$ & $\begin{pmatrix}1&u+2\\ 0&1\end{pmatrix}$ & $\begin{pmatrix}1&0\\ -1&1\end{pmatrix}$ & $(1, 1, 1, 1, 1)$ & - & 0 \rule[0pt]{0pt}{4ex} \\ 
				\hline 
			\end{tabular} \captionof{table}{\label{G_{23} Nice 4-Tuples}Here $q = \zeta_5$ and $u = q^3 +q^2$.}
		\end{center}
		\begin{center}
			\medskip
			\centering\begin{adjustbox}{width=\textwidth,center}
			\begin{tabular}{|Sc|Sc|Sc|Sc|Sc|Sc|Sc|Sc|Sc|Sc|} 
				\hline 
				\multicolumn{10}{|c|}{$G_{25}$ 4-tuple middle convolutions} \rule[0pt]{0pt}{3ex}\\ 
				\hline
				  Type & $\xi$ & $\lambda$ & $\tilde{M}_1$ & $\tilde{M}_2$ & $\tilde{M}_3$ & $\tilde{M}_4$ & Character & O.~size & S.~size \rule[0pt]{0pt}{4ex}\\ [0.5ex] 
				\hline
				A &  $\frac{1}{3}$ & $z$ & $\begin{pmatrix}0&-z\\ z&-z\end{pmatrix}$ & $\begin{pmatrix}1&1\\ 0&1\end{pmatrix}$ & $\begin{pmatrix}1&1\\ 0&1\end{pmatrix}$ & $\begin{pmatrix}1&0\\ -1&1\end{pmatrix}$ & $(z^2, 1, 1, 1, z)$  &  45 & 0 \rule[0pt]{0pt}{4ex} \\ 
				\hline 
				B &  $\frac{1}{3}$ & $z$ & $\begin{pmatrix}1&z^2\\ 0&1\end{pmatrix}$ & $\begin{pmatrix}1&0\\ -z&1\end{pmatrix}$ & $\begin{pmatrix}1&0\\ -z&1\end{pmatrix}$ & $\begin{pmatrix}1&0\\ -z&1\end{pmatrix}$ & $(1, 1, 1, 1, 1)$  & 45  &  0\rule[0pt]{0pt}{4ex} \\ 
				\hline 
				C & $\frac{1}{6}$ &  $z+1$ & $\begin{pmatrix}-z&0\\ -1&1\end{pmatrix}$ &   $\begin{pmatrix}-z&-1\\ z^2-1&z\end{pmatrix}$ &   $\begin{pmatrix}-1&z+1\\ 0&1\end{pmatrix}$ & $\begin{pmatrix}1&0\\ 1&-z\end{pmatrix}$ &  $(u^7, u^3, u^3, u^7, u^4)$  & 120  & 72 \rule[0pt]{0pt}{4ex} \\ 
				\hline 
			\end{tabular}\end{adjustbox} \captionof{table}{Here $z = \zeta_{3}$ and $u = \zeta_{12}$.} \label{G_{25} Nice 4-Tuples}
		\end{center}
		\begin{sidewaystable}
			\centering
			\begin{tabular}{|Sc|Sc|Sc|Sc|Sc|Sc|Sc|Sc|Sc|Sc|} 
				\hline 
				\multicolumn{10}{|c|}{$G_{26}$ 4-tuple middle convolutions } \rule[0pt]{0pt}{4ex}\\
				\hline
				  Type & $\xi$ & $\lambda$ & $\tilde{M}_1$ & $\tilde{M}_2$ & $\tilde{M}_3$ & $\tilde{M}_4$ & Character & O.~size & S.~size \rule[0pt]{0pt}{4ex} \\
				\hline
				A & $\frac{1}{6}$ & $z+1$ &   $\begin{pmatrix}z^2&0\\ z+2&1\end{pmatrix}$ & $\begin{pmatrix}-1&-1\\ 0&1\end{pmatrix}$ & $\begin{pmatrix}1&0\\ z^2&-z\end{pmatrix}$ & $\begin{pmatrix}1&0\\ z^2&-z\end{pmatrix}$ &   $(u^8, -u^3, u^7, u^7, u^5)$  & 120  & 72 \rule[0pt]{0pt}{4ex} \\ 
				\hline 
				B & $\frac{1}{6}$ & $z+1$ &   $\begin{pmatrix}-z&z+1\\ -z+1&z\end{pmatrix}$ & $\begin{pmatrix}1&0\\ z&-z\end{pmatrix}$ & $\begin{pmatrix}-1&-z\\ 0&1\end{pmatrix}$ &   $\begin{pmatrix}1&0\\ 2z+1&z^2\end{pmatrix}$ &   $(u^3, u^7, u^3, u^8, u^3)$  & 240  &  72 \rule[0pt]{0pt}{4ex} \\ 
				\hline 
			\end{tabular} \caption{\label{G_{26} Nice 4-Tuples}Here $z = \zeta_{3}$ and $u = \zeta_{12}$.}
			\bigskip
			\centering
			\begin{tabular}{|Sc|Sc|Sc|Sc|Sc|Sc|Sc|Sc|Sc|Sc|} 
				\hline 
				\multicolumn{10}{|c|}{$G_{27}$ 4-tuple middle convolutions} \rule[0pt]{0pt}{3ex}\\ 
				\hline
				  Type & $\xi$ & $\lambda$ & $\tilde{M}_1$ & $\tilde{M}_2$ & $\tilde{M}_3$ & $\tilde{M}_4$ & Character &   O.~size &   S.~size \rule[0pt]{0pt}{4ex}\\ [0.5ex] 
				\hline
				A & $\frac{5}{6}$ & $-z^5$ & $\begin{pmatrix}-f_2+1&\frac{1}{4}g_1\\ g_2&z^{5\:}+f_2\end{pmatrix}$ & $\begin{pmatrix}f_1+2&\frac{1}{2}f_2\\ g_3&g_4\end{pmatrix}$ & $\begin{pmatrix}z^5&\frac{1}{4}g_5\\ 0&1\end{pmatrix}$ & $\begin{pmatrix}1&0\\ g_6&z^5\end{pmatrix}$ & $(z^5, z^5, z^5, z^5, z^{10})$  & - & 360 \rule[0pt]{0pt}{4ex} \\ 
				\hline 
			\end{tabular} \caption{\label{G_{27} Nice 4-Tuples}Here $z = \zeta_{15}$ $f_1 = z^7-z^3+z^2-1$, $f_2 = z^5-z^4-z +2$, $g_1 = 2z^7-3z^5+3z^4-2z^3+2z^2+3z-2$, $g_2 = 2z^7-z^4-2z^3+2z^2-z+1$, $g_3 = -3z^7+3z^5-2z^4+3z^3-3z^2-2z^2-2z+3$, $g_4 = -z^7+z^5+z^3-z^2$, $g_5 =  -z^7-z^5+z^3-z^2-2$, $g_6 = -z^7+3z^5-z^4+z^3-z^2-z+2$.}
			\bigskip
			\centering
			\begin{tabular}{|Sc|Sc|Sc|Sc|Sc|Sc|Sc|Sc|Sc|} 
				\hline 
				\multicolumn{9}{|c|}{$G_{28}$ type A middle convolutions} \rule[0pt]{0pt}{3ex}\\ 
				\hline
				$\xi$ & $\lambda$ & $\tilde{A}_1$ & $\tilde{A}_2$ & $\tilde{A}_3$ & $\tilde{A}_4$ & Character & O.~size & S.~size \rule[0pt]{0pt}{4ex}\\ [0.5ex] 
				\hline
				$\frac{1}{6}$ & $z^4$ & $\begin{pmatrix}1&1\\ 0&-z^4\end{pmatrix}$ & $\begin{pmatrix}-z^4+2&-z^4+1\\ -2&-1\end{pmatrix}$ & $\begin{pmatrix}-z^4&-1\\ 0&1\end{pmatrix}$ & $\begin{pmatrix}1&0\\ -2z^4&-z^4\end{pmatrix}$ & $(-z^4, -z^4, -z^4, -z^4, (-z^4)^2)$  &  45 & 72 \rule[0pt]{0pt}{4ex} \\ 
				\hline 
				$\frac{5}{6}$ & $-z^4+1$ & $\begin{pmatrix}1&1\\ 0&z^8\end{pmatrix}$ & $\begin{pmatrix}z^4+1&z^4\\ -2&-1\end{pmatrix}$ & $\begin{pmatrix}z^8&-1\\ 0&1\end{pmatrix}$ & $\begin{pmatrix}1&0\\ 2z^8&z^8\end{pmatrix}$ & $(z^8, z^8, z^8, z^8,-z^4)$  &  45 & 72 \rule[0pt]{0pt}{4ex} \\ 
				\hline
			\end{tabular} \caption{\label{G_{28} Nice 4-Tuples}Here $z = \zeta_{24}$.}
		\end{sidewaystable}

		\begin{sidewaystable}
			\centering\begin{adjustbox}{width=\textwidth}
			\begin{tabular}{|Sc|Sc|Sc|Sc|Sc|Sc|Sc|Sc|Sc|} 
				\hline 
				$\xi$ & $\lambda$ & $M_1$ & $M_2$ & $M_3$ & $M_4$ & Character & O.~size & S.~size \rule[0pt]{0pt}{4ex}\\ [0.5ex] 
				\hline
				\multicolumn{9}{|c|}{$G_{30}$ type A middle convolutions} \rule[0pt]{0pt}{3ex}\\ \hline
				$\frac{3}{10}$ & $z^{18}$ &   $\begin{pmatrix}-z^6v&z^6\\ -z^{24}+z^{12}-1&-z^{24}\end{pmatrix}$ &   $\begin{pmatrix}-z^{18}+z^6&-z^{18}\\ v^2&-v\end{pmatrix}$ &$\begin{pmatrix}-z^{18}&-1\\ 0&1\end{pmatrix}$ & $\begin{pmatrix}1&0\\ -z^{18}+v&-z^{18}\end{pmatrix}$&  $(z^{6}, z^{6}, z^{6}, z^{6}, z^{36})$ &  50 & 600 \rule[0pt]{0pt}{4ex} \\ 
				\hline 
				$\frac{7}{10}$ & $-z^{12}$ &   $\begin{pmatrix}z^{12}-v&-z^{24}\\ -z^{18}+v&z^6\end{pmatrix}$ &   $\begin{pmatrix}-z^{24}+z^{12}&z^{12}\\ z^{24}-z^6v&z^{24}+1\end{pmatrix}$ & $\begin{pmatrix}z^{12}&-1\\ 0&1\end{pmatrix}$ &   $\begin{pmatrix}1&0\\ -z^{24}+z^{12}-1&z^{12}\end{pmatrix}$ & $(z^{24}, z^{24}, z^{24}, z^{24}, z^{24})$& 50 & 600 \rule[0pt]{0pt}{4ex}\\ \hline
				\multicolumn{9}{|c|}{$G_{30}$ type B middle convolutions} \rule[0pt]{0pt}{3ex}\\\hline
				$\frac{1}{6}$ & $z^{10}$ &  $\begin{pmatrix}-z^{20}-z^{16}&-z^{16}-z^{10}\\ z^{16}-1&z^{16}\end{pmatrix}$&   $\begin{pmatrix}-z^{10}&0\\ z^{10}&1\end{pmatrix}$ & $\begin{pmatrix}-z^{10}&u-1\\ 0&1\end{pmatrix}$ & $\begin{pmatrix}1&0\\ -z^{10}&-z^{10}\end{pmatrix}$ &   $-(z^{10}, z^{10}, z^{10}, z^{10}, -z^{20})$ & 90 & 360 \rule[0pt]{0pt}{4ex} \\ 
				\hline 
				$\frac{5}{6}$ &   $-z^{10}+1$ &   $\begin{pmatrix}y+z^{20}&y+z^{10}-2\\ -y&-y+1\end{pmatrix}$ & $\begin{pmatrix}z^{20}&0\\ -z^{20}&1\end{pmatrix}$ & $\begin{pmatrix}z^{20}&u-1\\ 0&1\end{pmatrix}$ & $\begin{pmatrix}1&0\\ z^{20}&z^{20}\end{pmatrix}$ &   $(z^{20}, z^{20}, z^{20}, z^{20}, -z^{10})$ & 90 & 360 \rule[0pt]{-5pt}{4ex}\\ \hline 
				\multicolumn{9}{|c|}{$G_{30}$ type C middle convolutions} \rule[0pt]{0pt}{3ex}\\
				\hline
				$\frac{1}{10}$ & $z^{6}$ &  $\begin{pmatrix}1&-u-1\\ 0&-z^6\end{pmatrix}$ &   $\begin{pmatrix}-v&-z^{12}+v\\ u&0\end{pmatrix}$ & $\begin{pmatrix}-z^6&u+1\\ 0&1\end{pmatrix}$ & $\begin{pmatrix}1&0\\ -z^{12}-1&-z^6\end{pmatrix}$ & $(z^{12}, z^{12}, z^{12}, z^{12}, z^{12})$ & 50 & 600 \rule[0pt]{0pt}{4ex} \\ 
				\hline 
				$\frac{9}{10}$ & $-z^{24}$ &   $\begin{pmatrix}1&-u-1\\ 0&z^{24}\end{pmatrix}$ & $\begin{pmatrix}z^{24}+1&z^6v\\ u&0\end{pmatrix}$ & $\begin{pmatrix}z^{24}&u+1\\ 0&1\end{pmatrix}$ & $\begin{pmatrix}1&0\\ u+z^{12}&z^{24}\end{pmatrix}$ & $(z^{18}, z^{18}, z^{18}, z^{18}, z^{48})$ & 50 & 600 \rule[0pt]{-5pt}{4ex}\\ \hline
			\end{tabular} \end{adjustbox}\caption{\label{G_{30} Nice 4-Tuples}Here $z$ denotes a $\supth{60}$ root of unity, $u = z^{14}-z^6-z^4$, and $v = z^{6} -1$ and $y = z^{10} +z^8-z^2$.}
                \end{sidewaystable}

                \begin{sidewaystable}
                        \centering
			\begin{tabular}{|Sc|Sc|Sc|Sc|Sc|Sc|Sc|Sc|Sc|} 
				\hline 
				$\xi$ & $\lambda$ & $M_1$ & $M_2$ & $M_3$ & $M_4$ & Character & O.~size & S.~size \rule[0pt]{0pt}{4ex}\\ [0.5ex] 
				\hline
				\multicolumn{9}{|c|}{$G_{32}$ type A middle convolutions} \rule[0pt]{0pt}{3ex}\\ \hline
				$\frac{5}{6}$ & $-z^{2} +1$ & $\begin{pmatrix}0&-1\\ -1&0\end{pmatrix}$ & $\begin{pmatrix}1&0\\ 0&z^2\end{pmatrix}$ & $\begin{pmatrix}z^2&0\\ 0&1\end{pmatrix}$ & $\begin{pmatrix}1&0\\ 0&z^2\end{pmatrix}$ &  $(-z^{3}, z^{5}, z^{5}, z^{5}, 1)$ &  40 & 72 \rule[0pt]{0pt}{4ex} \\
				\hline 
				\multicolumn{9}{|c|}{$G_{32}$ type B middle convolutions} \rule[0pt]{0pt}{3ex}\\ \hline
				$\frac{1}{6}$ & $z^{2}$ & $\begin{pmatrix}1&-w\\ 0&-w\end{pmatrix}$ & $\begin{pmatrix}w^2&w-1\\ -1&-w^2\end{pmatrix}$ & $\begin{pmatrix}-w&w\\ 0&1\end{pmatrix}$ & $\begin{pmatrix}1&0\\ -w&-1\end{pmatrix}$ &  $(z, z^{3}, z, z^{3}, z^2-1)$ &  60 & 72 \rule[0pt]{0pt}{4ex} \\ 
				\hline 
				$\frac{5}{6}$ & $-z^{2} +1$ & $\begin{pmatrix}1&-w\\ 0&-1\end{pmatrix}$ & $\begin{pmatrix}1&0\\ -1&-w^2\end{pmatrix}$ & $\begin{pmatrix}-1&w\\ 0&1\end{pmatrix}$ & $\begin{pmatrix}1&0\\ -1&-w\end{pmatrix}$ & $(z^3, z^5, z^3, z, 1)$&  & 0 \rule[0pt]{0pt}{4ex}\\ \hline
				\multicolumn{9}{|c|}{$G_{32}$ type C middle convolutions} \rule[0pt]{0pt}{3ex}\\ \hline
				$\frac{1}{12}$ & $z$ & $\begin{pmatrix}-z^3&0\\ z^{11}&1\end{pmatrix}$ & $\begin{pmatrix}1&z^4\\ 0&-z^3\end{pmatrix}$ & $\begin{pmatrix}-z^3&-z^4\\ 0&1\end{pmatrix}$ & $\begin{pmatrix}1&0\\ -z^{11}&-z^3\end{pmatrix}$ &  $(q^3, q^{3}, q^3, q^{3}, -1)$ &  20 & 96 \rule[0pt]{0pt}{4ex} \\ 
				\hline 
				$\frac{7}{12}$ & $-z$ & $\begin{pmatrix}z^3&0\\ -z^{11}&1\end{pmatrix}$ & $\begin{pmatrix}1&z^4\\ 0&z^3\end{pmatrix}$ & $\begin{pmatrix}z^3&-z^4\\ 0&1\end{pmatrix}$ & $\begin{pmatrix}1&0\\ z^{11}&z^3\end{pmatrix}$ & $(q^9, q^9, q^9, q^9, -1)$& 20 & 96 \rule[0pt]{0pt}{4ex}\\ \hline
			\end{tabular} \caption{\label{G_{32} Nice 4-Tuples}Here $z = \zeta_{12}$, $w = \zeta_3$, and $q = \zeta_{24}$.}
		\end{sidewaystable}
		
		\section{Middle convolution tables for nice 5-tuples} \label{appendix C}
		The notation here is the same as above.
      
	 	 \begin{table}[H] 
 		 \centering               
			\rotatebox{90}{%
			 \begin{minipage}{0.7\linewidth}
			   \centering
			\hspace*{-10cm}
                        \begin{tabular}{|Sc|Sc|Sc|Sc|Sc|Sc|Sc|Sc|Sc|Sc|Sc|}
						\hline 
						\multicolumn{11}{|c|}{$G_{32}$ 5-tuple middle convolutions} \rule[0pt]{0pt}{4ex}\\ 
						\hline
						  Type   & $\xi$ & $\lambda$ & $\tilde{M}_1$ & $\tilde{M}_2$ & $\tilde{M}_3$ & $\tilde{M}_4$ & $\tilde{M}_5$ & Character & O.~size & S.~size \rule[0pt]{0pt}{4ex} \\
						\hline
						A &  $\frac{5}{6}$ & $-w$ & $\begin{pmatrix}1&0\\ 0&w+1\end{pmatrix}$ & $\begin{pmatrix}1&0\\ 0&w+1\end{pmatrix}$ & $\begin{pmatrix}0&w+1\\ -w&0\end{pmatrix}$ & $\begin{pmatrix}w+1&0\\ 0&1\end{pmatrix}$ & $\begin{pmatrix}1&0\\ 0&w+1\end{pmatrix}$ & $(z^{11}, z^{11}, -z^3, z^{11}, z^{11}, z^7)$  & -  & 72 \rule[0pt]{0pt}{4ex} \\ 
						\hline 
						B &  $\frac{5}{6}$ & $-w$ & $\begin{pmatrix}0&1\\ 1&0\end{pmatrix}$ & $\begin{pmatrix}1&0\\ 0&w+1\end{pmatrix}$ & $\begin{pmatrix}0&-w-1\\ w&0\end{pmatrix}$ & $\begin{pmatrix}w+1&0\\ 0&1\end{pmatrix}$ & $\begin{pmatrix}1&0\\ 0&w+1\end{pmatrix}$&  $(z^3, z^{11}, z^3, z^{11}, z^{11}, -z^3)$  & -  &  72 \rule[0pt]{0pt}{4ex} \\ 
						\hline 
			\end{tabular}
                       \caption{Here $w = \zeta_{3}$ and $z = \zeta_{12}$.} \label{G_{32} Nice 5-Tuples}
			\end{minipage}}
\end{table}
                                   \clearpage


\end{document}